\documentclass[a4paper,12pt]{amsart}
\usepackage{amsmath, amscd, amssymb, mathrsfs, mathtools,fullpage, 
enumerate, svninfo,xcolor,enumitem}
\usepackage[all]{xy}
\usepackage{yfonts}
\usepackage{stmaryrd}
\usepackage{hyperref}

\title{$p$-adic $L$-functions for Hecke characters of totally imaginary fields}

\author{Guido Kings}
\address{Fakult\"at f\"ur Mathematik \\
Universit\"at Regensburg\\
93040 Regensburg\\
Germany}
\author{Johannes Sprang}
\address{
Fakultät für Mathematik\\
Universität Duisburg-Essen\\
45127 Essen\\
Germany}

\thanks{This research was supported by the DFG grant: SFB 1085 Higher invariants}

\theoremstyle{plain}
    \newtheorem{theorem}{Theorem}[section]
\newtheorem*{theorem*}{Theorem}
    \newtheorem{lemma}[theorem]{Lemma}
    \newtheorem{proposition}[theorem]{Proposition}
    \newtheorem{corollary}[theorem]{Corollary}
    \newtheorem*{corollary*}{Corollary}
    \newtheorem{notation}[theorem]{Notation}

\theoremstyle{remark}
    \newtheorem{remark}[theorem]{Remark}
    
     \newtheorem{example}[theorem]{Example}

\theoremstyle{definition}
    \newtheorem{definition}[theorem]{Definition}
    
\newenvironment{maintheorem}[1]{%
  \renewcommand\thetheorem{#1}
  \theorem
}{\endtheorem}

\DeclareMathOperator{\TSym}{TSym}
\DeclareMathOperator{\Sym}{Sym}
\DeclareMathOperator{\Hom}{Hom}

\DeclareMathOperator{\Spec}{Spec}

\DeclareMathOperator{\Spf}{Spf}

\DeclareMathOperator{\pr}{pr}

\DeclareMathOperator{\Char}{char}
\DeclareMathOperator{\Gal}{Gal}

\DeclareMathOperator{\Lie}{Lie}

\newcommand{\padic}{\text{\textup{p-adic}}}

\newcommand{\res}{\mathrm{res}}
\newcommand{\cor}{\mathrm{cor}}

\newcommand{\sG}{\mathscr{G}}

\newcommand{\sO}{\mathscr{O}}

\newcommand{\sP}{\mathscr{P}}

\newcommand{\cA}{\mathcal{A}}
\newcommand{\cB}{\mathcal{B}}

\newcommand{\cF}{\mathcal{F}}

\newcommand{\cI}{\mathcal{I}}

\newcommand{\cO}{\mathcal{O}}
\newcommand{\cP}{\mathcal{P}}

\newcommand{\cR}{\mathcal{R}}

\newcommand{\cU}{\mathcal{U}}

\newcommand{\fra}{\mathfrak{a}}
\newcommand{\frb}{\mathfrak{b}}
\newcommand{\frc}{\mathfrak{c}}

\newcommand{\frf}{\mathfrak{f}}

\newcommand{\frn}{\mathfrak{n}}

\newcommand{\frp}{\mathfrak{p}}
\newcommand{\frt}{\mathfrak{t}}

\newcommand{\frN}{\mathfrak{N}}
\newcommand{\frP}{\mathfrak{P}}

\newcommand{\bbA}{\mathbb{A}}
\newcommand{\CC}{\mathbb{C}}

\newcommand{\QQ}{\mathbb{Q}}
\newcommand{\RR}{\mathbb{R}}
\newcommand{\ZZ}{\mathbb{Z}}

\newcommand{\FF}{\mathbb{F}}
\newcommand{\NN}{\mathbb{N}}

\newcommand{\Q}{\mathbb{Q}}
\newcommand{\Z}{\mathbb{Z}}
\newcommand{\R}{\mathbb{R}}
\newcommand{\C}{\mathbb{C}}
\newcommand{\Qp}{{\QQ_p}}
\newcommand{\Zp}{{\ZZ_p}}

\newcommand{\pDiv}{\mathbf{pDiv}}

\newcommand{\HTDpairs}{\mathbf{HT}^\vee\mathbf{pairs}}
\newcommand{\Adic}{\mathbf{Adic}}
\newcommand{\Rigid}{\mathbf{Rigid}}

\newcommand{\CatCharVar}{\mathbf{CharVar}}

\newcommand{\grT}{T}
\newcommand{\CharVar}[1]{\widehat{#1}}

\newcommand{\dR}{\mathrm{dR}}
\newcommand{\rig}{\mathrm{rig}}
\newcommand{\fin}{\mathrm{fin}}
\newcommand{\ad}{\mathrm{ad}}
\newcommand{\an}{\mathrm{an}}

\newcommand{\prolim}{\varprojlim}

\newcommand{\Cloc}{C}
\newcommand{\Lloc}{F}
\newcommand{\Llocan}{F\text{-an}}
\newcommand{\isom}{\cong}

\newcommand{\id}{\mathrm{id}}

\newcommand{\ul}[1]{\underline{#1}}
\newcommand{\ol}[1]{\overline{#1}}
\DeclareMathOperator{\wP}{\widehat{\sP}}

\newcommand{\dashsum}{\sideset{}{'}{\sum}}

\newcommand{\EK}{EK}

\DeclareMathOperator{\LieA}{\partial(\cA)}

\def\endpiece{xxx}
\def\makeAlphabet[#1]{\expandafter\makeA#1,xxx,}
\def\makealphabet[#1]{\expandafter\makea#1,xxx,}
\def\makeA#1,{\def\temp{#1}\ifx\temp\endpiece\else%
\mkbb{#1}\mkfrak{#1}\mkbf{#1}\mkcal{#1}\mkscr{#1}\expandafter\makeA\fi}%
\def\makea#1,{\def\temp{#1}\ifx\temp\endpiece\else\mkfrak{#1}\mkbf{#1}\expandafter\makea\fi}%
\def\mkbb#1{\expandafter\def\csname bb#1\endcsname{\mathbb{#1}}}%
\def\mkfrak#1{\expandafter\def\csname fr#1\endcsname{\mathfrak{#1}}}
\def\mkbf#1{\expandafter\def\csname b#1\endcsname{\mathbf{#1}}}
\def\mkcal#1{\expandafter\def\csname c#1\endcsname{\mathcal{#1}}}
\def\mkscr#1{\expandafter\def\csname s#1\endcsname{\mathscr{#1}}}
\def\makeop[#1]{\xmakeop#1,xxx,}
\def\mkop#1{\expandafter\def\csname #1\endcsname{{\mathrm{#1}}}} %
\def\xmakeop#1,{\def\temp{#1}\ifx\temp\endpiece\else\mkop{#1}\expandafter\xmakeop\fi}%

\makeop[Mic,Mod,im]

\newcommand{\Gmf}{\widehat{\mathbb{G}}_m}
\newcommand{\OCp}{\sO_{\CC_p}}
\newcommand{\Cp}{{\CC_p}}

\newcommand{\eval}{\mathrm{eval}}

\newcommand{\Local}{\mathrm{Local}}
\newcommand{\cond}{\mathrm{cond}}

\AtBeginDocument{%
   \def\MR#1{}
}

\let\Re\relax
\DeclareMathOperator{\Re}{Re} 

\numberwithin{equation}{section}

\begin{document}

\begin{abstract}We construct $p$-adic $L$-functions interpolating critical $L$-values of algebraic Hecke characters for arbitrary unramified primes $p$ and any totally imaginary field. For non-ordinary primes, the only previously known case was that of imaginary quadratic extensions of $\QQ$. One of the main
ingredients is a new  $p$-adic Fourier theory relating generic fibers of $p$-divisible groups to a general class of character varieties. Combining this with equivariant cohomology classes constructed in a previous paper allows us to construct the $p$-adic $L$-function.
\end{abstract}

\setcounter{tocdepth}{2}
\maketitle

\tableofcontents

`\section*{Introduction}

The idea to construct $p$-adic $L$-functions as functions on the $p$-adic integers interpolating the values of Dirichlet $L$-functions at negative integers goes back to Kubota and Leopoldt. Of course, such a construction makes only sense, if algebraicity properties of these special values are known.

The Deligne conjecture on critical special values of $L$-functions of motives predicts these values to be algebraic up to explicit periods. These algebraic integers are in general expected  to encode deep arithmetic information about the underlying motive. A well-established method to connect the $L$-values with arithmetic properties consists in constructing a $p$-adic $L$-function  interpolating these critical values and relate  them via Iwasawa theory to arithmetic invariants. 

An important class of $L$-functions for which the algebraicity of critical values is known are  $L$-functions of algebraic Hecke characters. In this case, critical Hecke $L$-values can occur only when the underlying number field is either totally real or totally imaginary and contains a CM field. In the totally real case, the algebraicity of the $L$-values was established by Siegel and Klingen, and the corresponding $p$-adic $L$-functions were first constructed independently by Barsky, Cassou-Noug{\`e}s and Deligne–Ribet (alphabetic order). Through the Iwasawa main conjecture established by Wiles, these $p$-adic $L$-functions are related to arithmetic information as the  class numbers in $\mathbb{Z}_p$-extensions of the totally real field. 

The situation for totally imaginary fields is considerably more difficult for two reasons: First, for general totally imaginary fields, a complete proof of the algebraicity of critical Hecke $L$-values has only been obtained relatively recently. Second, the $p$-adic behavior of these values depends on the splitting of $p$ in the maximal CM subfield, leading to the distinction between ordinary and non-ordinary primes. For ordinary primes, $p$-adic $L$-functions have been constructed by Katz for CM fields in \cite{Katz-CM}, and more recently in \cite{Kings-Sprang} for general totally imaginary fields containing a CM field. The non-ordinary case is substantially more difficult and only in the case of imaginary quadratic fields one has  some results. Here non-ordinary primes are precisely the non-split primes, and constructions of the corresponding $p$-adic $L$-functions were given in the work of Katz \cite{katz_formalgroups} (completed by \cite{AB} and \cite{Berger2025}), Boxall \cite{Boxall}, Schneider–Teitelbaum \cite{ST} and Andreatta–Iovita \cite{AI}. Not a single example of a $p$-adic $L$-function for non-ordinary primes interpolating critical Hecke $L$-values of totally imaginary fields has been constructed beyond the imaginary quadratic case, to the best of our knowledge.  
%

In this paper we construct a $p$-adic Hecke $L$-function for arbitrary primes $p$ interpolating all critical algebraic Hecke characters of any totally imaginary field $L$ whose infinity type is supported in a CM type $\Sigma$:

\begin{maintheorem}{A}[see Theorem \ref{thmA}]
There is a 
a  rigid analytic function $L_p(\cdot)$ 
with the following interpolation property:
For every critical algebraic Hecke character $\chi$ of conductor dividing $\frf p^\infty$ and infinity type $-\alpha\in \ZZ^\Sigma$ with $\alpha\geq \underline{1}$, we have
\[
	\frac{L_p(\chi)}{\Omega_p^{\alpha}} = (\mbox{explicit local factors})\frac{L_\frf(\chi,0)}{\Omega^{\alpha}}.
\] 
where $L_\frf(\chi,0)$ is the $L$-function of $\chi$ at $0$ and  $\Omega$ (resp. $\Omega_p$) are ($p$-adic) periods of an abelian variety $\cA$ with $\cA(\CC)=\CC^\Sigma/\sO_L$. 
\end{maintheorem}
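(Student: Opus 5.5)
We would build $L_p$ as the Fourier transform of a $p$-adic measure, or more precisely a distribution, attached to the abelian variety $\cA$ with CM by $\sO_L$. This measure would come from the equivariant (Eisenstein–Kronecker) cohomology classes constructed in \cite{Kings-Sprang}, and the plan proceeds in three steps.

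\textbf{Step 1: the $p$-adic class.} Take $\cA$ with $\cA(\CC)=\CC^\Sigma/\sO_L$, defined over a number field and with good reduction at primes above $p$; this is possible since $p$ is unramified. The equivariant coherent cohomology class of \cite{Kings-Sprang} lives on $\cA\setminus\cA[\frf]$, or on a suitable $\frf$-torsion translate. Restricting it to the formal completion along the identity section at a prime above $p$ gives a section over the $p$-divisible group $\cA[p^\infty]$, viewed as a formal group. The de Rham realization of the class is given by Eisenstein–Kronecker series. Its Taylor coefficients at torsion points are, up to the periods $\Omega^\alpha$, the critical values $L_\frf(\chi,0)$ for $\chi$ of infinity type $-\alpha$ with $\alpha\ge\underline 1$. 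We take this from the earlier algebraicity and interpolation formulas in \cite{Kings-Sprang}.

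\textbf{Step 2: $p$-adic Fourier theory.} Here we use the new Fourier theory announced in the abstract. It identifies rigid functions on the generic fiber of the $p$-divisible group $\cA[p^\infty]$ over $\sO_{\CC_p}$ with rigid functions on a character variety $\CharVar{T}$. This variety parametrizes locally $L_p$-analytic characters of $(\sO_L\otimes\Zp)^\times$, twisted by the finite part of conductor $\frf$. In the ordinary case $\cA[p^\infty]$ is multiplicative, and the identification reduces to Amice/Katz. In the non-ordinary case one needs the Schneider–Teitelbaum type isomorphism, generalized to a product of Lubin–Tate-like groups indexed by the primes of $L^+$ above $p$. Applying this isomorphism to the $p$-adic section from Step 1 gives a rigid analytic function $L_p$ on the character variety. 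We then take the unit-root or Hodge–Tate comparison of the invariant differentials, which produces the $p$-adic period $\Omega_p$.

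\textbf{Step 3: interpolation.} Evaluate $L_p$ at an algebraic character $\chi$, viewed as a point of $\CharVar{T}$ via its $p$-adic avatar. Under the Fourier isomorphism this evaluation corresponds to applying the differential operators $\partial^{\alpha}$ to the class and summing over the $p$-power torsion of the finite part of $\chi$. This is a Gauss-sum weighted average, followed by removing the Euler factors at $p$ through a $U_p$/trace operator. Comparing the algebraic Taylor coefficients in the two period normalizations gives $L_p(\chi)/\Omega_p^\alpha$ equal to the local factors times $L_\frf(\chi,0)/\Omega^\alpha$.

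\textbf{Main obstacle.} The hardest step is Step 2 for non-ordinary $p$. There the formal group is not isomorphic to $\Gmf$ over $\Zp$, so the Fourier transform only becomes an isomorphism over $\CC_p$ after base change, with a non-trivial twisted form of the character variety. One also has to check that the class from Step 1 is bounded enough, or of the right analytic growth, to lie in the image. We would first treat one prime of $L^+$ above $p$ using the Lubin–Tate case of Schneider–Teitelbaum, and then take products. The remaining worry is the compatibility of the periods in Step 3, namely showing that the de Rham and Hodge–Tate trivializations differ exactly by $\Omega_p^\alpha$.
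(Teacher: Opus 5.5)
The overall architecture of your proposal matches the paper: Kings--Sprang classes, Fourier theory on $\cA[p^\infty]$, and $\Omega_p$ from the Hodge--Tate map. There are, however, two genuine gaps.

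\textbf{Step 2 fails for the primes the theorem is about.} For a prime $\frP$ of $L$ above $p$, the factor $\cA[\frP^\infty]$ has height $[L_\frP:\Qp]$ and dimension $\#\{\sigma\in\Sigma:\sigma\text{ induces }\frP\}$. It is of Lubin--Tate, \'etale or multiplicative type only when this number is $1$, $0$ or $[L_\frP:\Qp]$.
\begin{itemize}
\item Take $p$ inert in a quartic CM field $L=K$. Then $\cA[p^\infty]$ is a single group with $\sO_K\otimes\Zp$-action, of height $4$ and dimension $2$.
\item Its Hodge--Tate pair $(W(\Sigma),\sO_K\otimes\Zp)$ admits no nontrivial decomposition, even up to isogeny. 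So there is no product of Lubin--Tate groups to which Schneider--Teitelbaum could be applied.
\item Accordingly, the right analyticity condition is not local $L_\frP$-analyticity. It is $\Sigma$-analyticity, $(d\chi)|_0\in W(\Sigma)$, a span of several embeddings. This is the condition satisfied by a critical character of infinity type $-\alpha$ with $\alpha\in\ZZ^\Sigma_{>0}$.
\end{itemize}
The missing ingredient is the $W$-analytic Fourier theory:
\begin{itemize}
\item Define $\widehat{T}_W$ as the pullback of $\log\colon\Hom(T,\Zp)\otimes(\Gmf)_\eta\to\Hom(T,\Zp)\otimes\mathbb{A}^1$ along $W\otimes\mathbb{A}^1$.
\item Prove $D_W(T,\Cp)\cong\cO(\widehat{T}_W)$. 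Identify the kernel of $D(T,\Cp)\to D_W(T,\Cp)$ with the Amice transforms vanishing on $\widehat{T}_W$, and use that $\widehat{T}$ is Stein.
\item Observe that Scholze--Weinstein's description of $G_\eta$ for $HT^\vee(G)=(\Lie G\otimes\Cp,T_pG^\vee)$ is literally the same fiber product (Theorem~\ref{thm:UniformizationOfCharVar}). Under $T\cong\sO_L\otimes\Zp$, the subspace $\Lie G\otimes\Cp$ becomes $W(\Sigma)$.
\end{itemize}
No prime-by-prime reduction is needed, and none is available.

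\textbf{The passage from the $p$-divisible group to Hecke characters is missing from Steps 1 and 3.} $G_\eta\cong\widehat{T}_W$ parametrizes characters of the \emph{additive} group $T\cong\sO_L\otimes\Zp$. Fourier theory therefore turns a function on $G_\eta$ into a distribution on $T$. The $p$-adic avatar of $\chi$ is not a point of $\widehat{T}_W$, so ``evaluating at $\chi$'' has no meaning yet. Moreover, for $d>1$ the Kings--Sprang input is not a section but a class in $H^{d-1}(\Gamma,\cO(G)\otimes\cdots)$ with $\Gamma=\sO_\frf^\times$. The paper proceeds as follows.
\begin{itemize}
\item Pull back ${}_{\frc}\EK_{\Gamma,\cB}$ along the maps $\tau_n\colon\cA[p^n]\to\cB\setminus\cB[\frc]$ (translation by an $\frf$-torsion point) for all $n$. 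This uses all of $\cO(G)=\varprojlim_n\cO(\cA[p^n])$, including the \'etale part in ordinary or mixed situations, not just the formal group at the identity.
\item Fourier-transform to distributions on $T$ and restrict to $T^\times$. The transform $\widehat{j_!\rho}$ of the extension by zero is what produces the Euler factors at $p$.
\item Cup with $\Gamma$-invariant test functions and cap with a fundamental class in $H_{d-1}(\Gamma)$ coming from $\RR^\Sigma_{>0}/\Gamma'$ (Proposition~\ref{prop:cap-product}). This gives a distribution on $T^\times/E(\Gamma)$ (Theorem~\ref{thmB}).
\item Repeat this for the Serre twists $\cA(\frf\frb_i^{-1})$ over ray class representatives $\frb_i$ and assemble the results on $G(p^\infty\frf)$.
\item Only then apply Fourier theory on $G(p^\infty\frf)$ to land in $\cO(\widehat{G(p^\infty\frf)}_{W(\Sigma)})$.
\end{itemize}
Finally, the class exists only after smoothing by an auxiliary ideal $\frc$. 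You must therefore divide by $N\frc-\chi(\frc^{-1})$ and glue over varying $\frc$, using that $N(\cdot)^{-1}$ is not $\Sigma$-analytic.
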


We refer the reader to \ref{thmA} for a more precise formulation of the theorem. With considerably more technical efforts, one should be able to treat also Hecke characters of $\infty$-type $\beta-\alpha$ for a fixed $\beta$. The case of families with varying $\beta$ and $\alpha$ seems currently not to be in reach with our techniques, although we hope to come back to this situation in future work. 

Our point of view of the $p$-adic $L$-function $ L_p(\cdot)$ is motivated by an analogy with Tate's approach to complex Hecke $L$-functions: In his approach, the $L$-function is seen as a (meromorphic) function on the complex manifold parametrizing all Hecke characters of a fixed conductor. Similarly,  
$p$-adic Hecke characters of conductor dividing $p^\infty \frf$ of a number field $L$ are parametrized by the $\Cp$-points of a rigid analytic variety $\widehat{G(p^\infty\frf)}$, see  Section \ref{sec:Hecke-character-variety} for more details.  The $p$-adic Hecke characters whose infinity type is supported in a CM type $\Sigma$ are singled out by an analyticity condition depending on $\Sigma$, which gives rise to a subvariety $\widehat{G(p^\infty\frf)}_{W(\Sigma)}$ of $\widehat{G(p^\infty\frf)}$. The function $L_p(\cdot)$ lives on  $\widehat{G(p^\infty\frf)}_{W(\Sigma)}$.

The construction of $L_p(\cdot)$ is again reminiscent of the approach by Tate, which mixes the additive and multiplicative structure of the number field $L$. This includes the  Fourier-transform on the adeles,  as well as the Mellin-transform on the ideles. In our construction the additive theory is related to Eisenstein series.   More precisely, one has first to use $p$-adic Fourier theory on $\sO_L\otimes \Z_p$ and then restrict to $(\sO_L\otimes \Z_p)^{\times}$, whose quotient by the one-units $\sO_\frf^{\times}$ in turn parametrizes the connected components of the Galois group $\Gal(L(p^{\infty}\frf)/L)=G(p^{\infty}\frf)$.

Therefore we develop in the first part of the paper a general $p$-adic Fourier theory for certain abelian $p$-adic Lie groups $A$ and their $W$-analytic character variety $\CharVar{A}_W$ depending on a subspace $W$ of the cotangent space. These character varieties are a natural generalization of the ones appearing in \cite{ST}. Let $C$ be a complete non-Archimedean extension of $\Qp$. We show:

\begin{maintheorem}{B}[see Theorem \ref{thm:FT-abelian}]
	Let $A$ be a compact topologically finitely generated abelian $p$-adic Lie group, $\fra=\Lie(A)$ and $W\subseteq \Hom_{\Qp}(\fra,F)$. The Fourier transform induces an isomorphism of topological $C$-algebras
	\[
		\cF_W\colon D_W(A,\Cloc) \xrightarrow{\sim} \cO(\CharVar{A}_W/\Cloc), \quad \lambda\mapsto \cF_\lambda|_{\CharVar{A}_W},
	\]
where $D_W(A,\Cloc)$ are the $W$-analytic distributions and $\cO(\CharVar{A}_W/\Cloc)$ are the rigid analytic functions on $\CharVar{A}_W$.
\end{maintheorem}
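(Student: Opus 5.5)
We reduce to the Schneider--Teitelbaum Fourier theory and then pass to products and finite-index subgroups. By the structure theory of compact topologically finitely generated abelian $p$-adic Lie groups, $A \cong A_{\mathrm{tors}} \times A_0$ with $A_{\mathrm{tors}}$ finite and $A_0 \cong \Zp^d$. We may also choose an open subgroup $A' \subseteq A_0$ which is a free $\sO_F$-module $\sO_F^{m}$ adapted to $W$, in the sense that $W$-analyticity on $A'$ means locally $F$-analytic in a chosen set of coordinates. Concretely, we decompose $\fra\otimes_{\Qp} F$ according to embeddings and pick coordinates such that $W$ is spanned by the corresponding $F$-linear forms.

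Both sides are compatible with this decomposition. On the distribution side we have
\[
D_W(A,\Cloc) \cong \bigoplus_{a\in A/A'} \delta_a * D_W(A',\Cloc),
\]
so $D_W(A,\Cloc)$ is a finite free $D_W(A',\Cloc)$-module, with basis given by coset representatives. On the character side, restriction to $A'$ makes $\CharVar{A}_W \to \CharVar{A'}_W$ a finite étale cover. Its fibers are torsors under the finite group $\Hom(A/A',\mu_\infty)$, and it is compatible with the same decomposition of functions. Hence it suffices to prove the theorem for $A'$, together with the compatibility of $\cF$ with translations, $\cF_{\delta_a*\lambda}(\chi)=\chi(a)\cF_\lambda(\chi)$. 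The finite group part is handled by finite Fourier analysis, which is an isomorphism after adjoining roots of unity; these lie in $\Cloc$ up to Galois descent, and that descent is built into the definition of $\CharVar{A}_W$ as a variety over $\Qp$ base-changed to $\Cloc$.

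For $A'$ we use a Künneth-type argument. The space $D_W(A',\Cloc)$ is the completed tensor product of the $D(\sO_F,\Cloc)$-type factors, or of the locally $\Qp$-analytic $\Zp$-factors, for each coordinate direction. This holds because $W$-analytic functions on a product are the completed projective tensor product of the analytic functions of the factors; we verify this at the level of the Banach spaces of functions analytic on cosets of $p^nA'$ and then pass to the inductive/projective limits. Likewise $\CharVar{A'}_W$ is the product of the one-dimensional character varieties. For $\Zp$ the result is Amice's theorem, where the character variety is the open unit disc. For $\sO_F$ with $F$-analytic distributions it is the Schneider--Teitelbaum theorem, where the character variety is a twisted form of the disc, via Lubin--Tate periods over $\Cp$ and Galois descent to general $\Cloc$. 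Since global functions on a product of quasi-Stein spaces form the completed tensor product of the global functions, and $\cF$ is multiplicative, $\cF_W$ is an isomorphism for $A'$.

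The main obstacle is the middle step: showing that $W$-analyticity really decomposes into a product of factors of Amice and Schneider--Teitelbaum type. For a general subspace $W\subseteq\Hom_{\Qp}(\fra,F)$, the directions need not split rationally over $\Qp$. We first attempt to enlarge $F$ so that $W$ becomes a sum of coordinate-type forms after base change. If that does not work, we instead work directly with the quasi-Stein description: $\CharVar{A}_W$ is the subvariety of $\CharVar{A}$ cut out by requiring $d\chi \in W\otimes \Cloc$. We then prove injectivity via density of characters, using the fact that Dirac distributions span a dense subspace. Surjectivity we prove via Mahler-type expansions with explicit norm estimates on each affinoid of an admissible cover. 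The topological statement then follows from the open mapping theorem for Fréchet spaces, applied to the dual, or from the explicit norm comparison.
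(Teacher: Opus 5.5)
\textbf{There is a genuine gap: the K\"unneth reduction only works for special $W$, and your fallback does not supply surjectivity.}

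Your preliminary reductions are harmless: splitting off the torsion, writing $D_W(A,C)=\bigoplus_{a\in A/A'}\delta_a * D_W(A',C)$, and the finite \'etale restriction $\widehat{A}_W\to\widehat{A'}_W$. The problem is the core step. The K\"unneth argument needs that, after passing to a sublattice, the pair $(A',W)$ splits as a direct sum of pieces of three kinds: Amice type ($W$ everything), Schneider--Teitelbaum type ($W$ spanned by one embedding of some $\sO_{F'}$), or \'etale type ($W=0$).

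A general $W\subseteq\Hom_{\Qp}(\fra,F)$ admits no such splitting. Enlarging $F$ cannot help, since $C^{\mathrm{an}}_W=C^{\mathrm{an}}_{W\otimes C}$. The obstruction is the position of $W\otimes C$ relative to the $\Qp$-structure $\Hom_{\Qp}(\fra,\Qp)$, not the coefficient field.
\begin{itemize}
\item Take $A=\Zp^2$ and $W$ spanned by $w=x+\pi y$ with $\pi\in F$ of degree $\geq 3$ over $\Qp$. Any admissible splitting forces $w(0,1)/w(1,0)=\pi$ to lie in a field of degree $\le 2$, which is false.
\item More importantly, the splitting already fails in the case this paper is written for. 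Take $L=\QQ(\zeta_5)$ and $p=2$, so $p$ is inert. Write the embeddings of $L\otimes\Qp$ as $\sigma_i=\sigma_0\circ\phi^i$; every CM type then has the form $\{\sigma_j,\sigma_{j+1}\}$, say $\Sigma=\{\sigma_0,\sigma_1\}$. Then $W(\Sigma)$ contains no nonzero $\Qp$-valued form and kills no nonzero vector, which excludes Amice and \'etale summands. Moreover, no nonzero $a\sigma_0+b\sigma_1$ vanishes on a $\Qp$-plane: its kernel is $0$ or of the form $\{x:\phi(x)=\gamma x\}$, hence at most a line. That excludes the only remaining option, two rank-$2$ Schneider--Teitelbaum summands.
\end{itemize}
By Theorem~\ref{thm:UniformizationOfCharVar}, these character varieties are generic fibres of $p$-divisible groups that are not isogenous to products of Lubin--Tate groups, so no product formula is available. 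Separately, ``$A'\cong\sO_F^m$'' is meaningless when $F$ is not finite over $\Qp$, e.g.\ $F=\Cp$.

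Your fallback does not close the gap.
\begin{itemize}
\item \emph{Injectivity.} Density of Dirac distributions in $D_W$ is the wrong density. You need a density statement inside $C^{\mathrm{an}}_W(A,C)$. The paper uses four ingredients: the density of $\{h|U : h\in \Sym^\bullet W_C,\ U\subseteq A\text{ open}\}$; finite-order characters; the expansion $\chi_{\beta\otimes z}=\sum_n z^n\beta^n/n!$ for $\beta\in W$; and the fact that $\Sym^\bullet W_C$ is spanned by the powers $\beta^n$.
\item \emph{Surjectivity.} ``Mahler-type expansions with explicit norm estimates'' is not an argument. You give no candidate basis, and you would have to construct one for general $W$.
\end{itemize}

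The missing idea is that surjectivity comes for free from coherent sheaf theory:
\begin{itemize}
\item $\widehat{A}_W$ is a closed reduced subvariety of the quasi-Stein space $\widehat{A}$.
\item By Amice/Emerton (Proposition~\ref{prop:Emerton-Fourier}), $D(A,C)\cong\mathcal{O}(\widehat{A}/C)$.
\item By the injectivity statement (cf.\ Corollary~\ref{cor:Ideal-IW}), the kernel $I_W$ of $D(A,C)\to D_W(A,C)$ is exactly the set of $\lambda$ with $\mathcal{F}_\lambda$ vanishing on $\widehat{A}_W$. These are the global sections of the coherent ideal sheaf $\mathcal{I}_W$ of $\widehat{A}_W$.
\item Kiehl's Theorem B on $\widehat{A}$ gives $\mathcal{O}(\widehat{A})/\mathcal{I}_W(\widehat{A})\cong\mathcal{O}(\widehat{A}_W)$.
\end{itemize}
Hence $D_W(A,C)=D(A,C)/I_W\cong\mathcal{O}(\widehat{A}_W/C)$, with no decomposition of $W$ and no estimates. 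This is the paper's proof of Theorem~\ref{thm:pAdicFourierMainTheoremW}, transferred verbatim to $A$ using that $\widehat{A}$ is strictly quasi-Stein.
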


In order to construct the function $L_p(\cdot)$, it remains to construct a rigid analytic function on the character variety $\widehat{G(p^\infty\frf)}_{W(\Sigma)}$. The main new idea here is to use the classification of $p$-divisible groups over $\OCp$ by Scholze-Weinstein \cite{ScholzeWeinstein}, to parametrize the $W$-analytic character varieties of a free $\Z_p$-module of finite rank $T$. 
For any $p$-divisible group we associate  the character variety $\widehat{T}_W$ on the Tate module $T=T_pG^\vee$ of the dual $p$-divisible group $G^{\vee}$, classifying all (locally) $W$-analytic characters on $T$, where $W=\Lie(G)$.  We prove:

\begin{maintheorem}{C}[see Theorem \ref{thm:UniformizationOfCharVar}]
There is an isomorphism of rigid analytic varieties over $\Cp$ 
\[
	\widehat{T}_W \xrightarrow{\sim} G_\eta^{rig}
\]
between the $W$-analytic character variety of $T=T_pG^\vee$ and the generic fiber of the $p$-divisible group $G$. Furthermore, the Fourier transform induces an isomorphism 
\begin{equation*}
    \cF_{W}\colon D_{W}(\grT,\Cp) \xrightarrow{\sim} \cO(\widehat{T}_W), 
\end{equation*}
between the space of $W$-analytic distributions on $T$ and the rigid analytic functions on the character variety $\widehat{T}_W$.
\end{maintheorem}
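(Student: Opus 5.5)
The plan is to build the map $\widehat{T}_W \to G_\eta^{rig}$ pointwise on $\Cp$-points using Scholze--Weinstein, upgrade it to a morphism of rigid spaces by working over affinoid test algebras, and then deduce the Fourier statement from Theorem B.

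\emph{Points.} By Scholze--Weinstein, $G$ over $\OCp$ is classified by the pair $(T_pG, \Lie(G)\otimes \Cp)$ together with the Hodge--Tate filtration, and the $\Cp$-points of the generic fibre are given by
\[
G_\eta^{rig}(\Cp) = G(\OCp) \cong \Hom_{\ZZ_p}(T_pG^\vee, \widehat{\mathbb{G}}_m(\OCp)).
\]
This identification comes from Cartier duality $G \cong \mathcal{H}om(G^\vee,\mu_{p^\infty})$ together with the fact that over $\OCp$ the points of $G$ embed as homomorphisms $T=T_pG^\vee \to 1+\mathfrak{m}_{\Cp}$. Every such homomorphism is a continuous character $\kappa$ of $T$, hence locally $\Qp$-analytic, with derivative $d\kappa \in \Hom(T\otimes\Qp,\Cp)$. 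I would check that the image is exactly the set of characters with $d\kappa \in W\otimes\Cp$, i.e.\ the $W$-analytic ones. For this I would use the Hodge--Tate sequence
\[
0\to \Lie(G)\otimes\Cp(1)\to T_pG\otimes\Cp \to \omega_{G^\vee}^\vee\otimes\Cp\to 0 .
\]
The derivative of the character attached to $x\in G(\OCp)$ is $\log_G(x)\in\Lie(G)\otimes\Cp$, paired with $T$ via the Hodge--Tate map $T=T_pG^\vee\to\omega_G\otimes\Cp$. Compatibility of $\log_G$ with the Hodge--Tate pairing, which is a standard fact, then shows that $d\kappa$ lies in $W=\Lie(G)$. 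For the converse, take a $W$-analytic character $\kappa$. Its derivative determines a point of $\Lie(G)\otimes\Cp$; lift it by $\log_G$ to a point $x$, which is defined up to torsion $G[p^\infty](\OCp)$. Then $\kappa\cdot\kappa_x^{-1}$ has zero derivative, so it is locally constant, hence of finite order and with values in $\mu_{p^\infty}$. Such a character corresponds to a torsion point by Cartier duality, which gives bijectivity.

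\emph{Geometry.} The bijection on points must come from a morphism of rigid spaces. I would define $\widehat{T}_W$ as an admissible union of affinoids, as earlier in the paper, on which $\kappa$ is $W$-analytic of bounded radius, analogous to Schneider--Teitelbaum. Choose a basis $t_1,\dots,t_h$ of $T$. The map $G_\eta^{rig}\to\widehat{\mathbb{G}}_m^{h}$, $x\mapsto(\kappa_x(t_i))_i$, is defined by the Cartier pairing and is a closed immersion of the generic fibres of formal groups, since $G^{rig}_\eta$ is an open polydisc and the map is a homomorphism which is injective on points. Its image is cut out by the vanishing of the $W$-complementary part of the derivative; in Schneider--Teitelbaum style this is expressed via the logarithms of the coordinates.

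\emph{Main obstacle.} The hardest step is to show that this image coincides with $\widehat{T}_W$ as rigid spaces, not merely on $\Cp$-points, including reducedness and the admissible covering. My first attempt would be to compare both spaces as smooth rigid groups of dimension $\dim W$. Both are étale-locally isomorphic via $\log$ to open subgroups of $W^\vee\otimes\Cp$, and the map is a bijection which induces an isomorphism on tangent spaces at the identity. An étale bijective morphism of smooth rigid spaces which is compatible with the group structures is an isomorphism, and checking this on the increasing affinoid exhaustion then concludes.

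Finally, the Fourier statement follows by applying Theorem B with $A=T$ and $\Cloc=\Cp$, transported along this isomorphism.
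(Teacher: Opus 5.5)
There is a genuine gap, in the step you yourself flag as the main obstacle.

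Your analysis on $\Cp$-points is essentially sound. There is one slip: $G(\OCp)\to\Hom_{\Zp}(T_pG^\vee,\Gmf(\OCp))$ is only injective. For $G=\Qp/\Zp$ the source is $\Qp/\Zp$ while the target is $1+\mathfrak{m}_{\Cp}$, which is exactly why the condition $d\kappa\in W$ is needed. The Fourier statement also follows from Theorem \ref{thm:pAdicFourierMainTheoremW} once the geometric isomorphism is known. The problem is passing from a bijection on points to an isomorphism of rigid spaces, because the principles you invoke there are false:
\begin{itemize}
\item An injective homomorphism of rigid groups need not be a closed immersion; the open unit disc inside $\mathbb{A}^1$ is a counterexample.
\item $G_\eta^{\mathrm{rig}}$ is an open polydisc only when $G$ is connected. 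If $G$ has a non-trivial \'etale part, it is a disjoint union of open polydiscs indexed by $G^{\mathrm{et}}(\OCp)$. For $W=0$, $\widehat{T}_W$ is the discrete group of finite-order characters.
\item Most seriously, an \'etale bijective homomorphism of smooth rigid groups that is an isomorphism on Lie algebras need not be an isomorphism. Let $Y$ be the closed unit disc with addition, $B=\{|z|\le r\}$ with $r<1$, and $X=\coprod_{a\in\OCp/B(\Cp)}(a+B)$ with the induced addition. Then $X\to Y$ is an \'etale bijective homomorphism, but $X$ is not quasi-compact.
\end{itemize}
What fails is admissibility of coverings, which no pointwise or tangent-space argument detects. ``Checking on the affinoid exhaustion'' is precisely the content you would still have to supply.

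The missing observation is that no such upgrade is needed. You describe $\widehat{T}_W$ as an admissible union of affinoids, but the paper does not define it that way. By Theorem \ref{thm:AdicCharVariety}, $\widehat{T}_W$ is the fibre product \eqref{eq:FiberProd} of $W\otimes\mathbb{A}^1$ and $\Hom_{\Zp}(T,\Zp)\otimes(\Gmf)_\eta$ over $\Hom_{\Zp}(T,\Zp)\otimes\mathbb{A}^1$ along $\log$. This is literally the Cartesian square \eqref{eq:ScholzeWeinsteinCartDiag} by which Scholze--Weinstein describe the generic fibre of $G_{(W,T)}$. Since $G\cong G_{HT^\vee(G)}$ by their equivalence of categories, and $(\cdot)^{\mathrm{ad}}$ commutes with fibre products, the isomorphism is immediate; that is the paper's entire proof.

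If you want a hands-on argument instead, build your morphism $G_\eta\to\widehat{T}_W$ from the Cartier pairing and $\log_G$ via the universal property of the fibre product. Then compare extensions rather than points. Both sides sit in sequences $0\to G[p^\infty]\to(\cdot)\xrightarrow{\log}W\otimes\mathbb{A}^1\to 0$ that are exact as \'etale sheaves:
\begin{itemize}
\item on the fibre product, the kernel of $\log$ is $\Hom_{\Zp}(T,\Zp)\otimes\mu_{p^\infty}$, which is $G[p^\infty]$ by Cartier duality;
\item $\log$ is \'etale and \'etale-locally surjective.
\end{itemize}
The five lemma then gives an isomorphism of sheaves, hence of spaces.
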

This theorem appeared already in our preprint \cite{KS24}, whose content is generalized and incorporated in this paper. We remark that a related Fourier theory has been developed independently by Graham, van Hoften, Howe \cite{GHH}.

In the second part of this paper, we apply the above result to the $p$-divisible group $A[p^\infty]$ of an abelian variety of CM type $\Sigma$ with CM by a totally imaginary field $L$. It turns out that we can relate the $p$-divisible group $A[p^\infty]$ to the subvariety $\widehat{G(p^\infty\frf)}_{W(\Sigma)}$ of the $p$-adic Hecke character variety $\widehat{G(p^\infty \frf)}$. In particular, this subvariety contains all critical Hecke characters with infinity type supported in $\Sigma$. Combining this with our earlier  results  \cite{Kings-Sprang}, which give algebraic functions on the $p$-divisible group $A[p^\infty]$, allows to construct a rigid analytic function $L_p\in \cO(\widehat{G(p^\infty \frf)})_{W(\Sigma)}$.

\part{$p$-adic Fourier theory}
We develop a $p$-adic Fourier transformation for rigid analytic functions on $p$-divisible groups in full generality. The main observation is that suitably defined  character varieties naturally fit into the classification scheme of Scholze-Weinstein \cite{ScholzeWeinstein} of $p$-divisible groups. Our theory can be seen as a generalization of the work of Schneider-Teitelbaum \cite{ST}.
\section{$W$-analytic character groups}

Let $ \Cloc\subset \C_p$ be a field extension of $\Q_p$ which is complete with respect to a non-archimedean valuation $|\cdot|$, extending the valuation on $\Qp$ and $\Lloc$ a complete extension of $\Q_p$ with $\Q_p\subset \Lloc\subset \Cloc$. Let $\grT$ be a free $\Zp$-module of finite rank. In the application in Part II, $T$ will be the  Tate module of the dual of an abelian variety with CM.  We will view $\grT$ as a $\Qp$-analytic group and denote by 
\begin{equation*}
    C^{\an}(\grT,\Cloc)
\end{equation*}
the space of $\Cloc$-valued locally $\Qp$-analytic functions on $\grT$. 
\begin{definition}
Let
\begin{equation*}
    \CharVar{\grT}(\Cloc):=\{\chi\in \Hom(\grT,\Cloc^{\times})\mid \chi\in C^{\an}(\grT,\Cloc)\}
\end{equation*}
be the group of locally $\Qp$-analytic $\Cloc$-valued characters on $\grT$. 
\end{definition}
The Lie algebra $\frt$ of $\grT$ is simply $\grT\otimes_{\Zp}\Qp$. Let us write $\frt^\vee:=\Hom_{\Qp}(\frt,\Qp)$ for the co-Lie algebra. We get the differential
\[
	d\colon C^{\an}(\grT,\Cloc)\to C^{\an}(\grT,\Cloc)\otimes_{\Qp} \frt^\vee.
\]
We consider subgroups of $C^{\an}(\grT,\Cloc)$ and $\CharVar{\grT}(\Cloc)$ defined by a certain analyticity condition.
\begin{definition}\label{def:differential-condition}
Let $W\subset  \Hom_{\Zp}(\grT,\Lloc)=\Hom_{\Qp}(\frt,\Lloc)$    
be a $\Lloc$-subvector space and define
\begin{equation*}
    C^{\an}_W(\grT,\Cloc):=\left\{f \in C^{\an}(\grT,\Cloc)\mid df\in  C^{\an}(\grT,\Cloc)\otimes_\Lloc W\right\},
\end{equation*}
and
\begin{equation*}
    \CharVar{\grT}_W(\Cloc):=\CharVar{\grT}(\Cloc) \cap C_W^{\an}(\grT,\Cloc).
\end{equation*}
We call $C_W^{\an}(\grT,\Cloc)$ the space of $W$-analytic functions and $\CharVar{\grT}_W(\Cloc)$ the \emph{$W$-analytic character group}. A subspace $W\subset  \Hom_{\Zp}(\grT,\Lloc)=\Hom_{\Qp}(\frt,\Lloc)$ as above is called an \emph{analyticity condition} on $T$ over $F$.
\end{definition}
From the formula $d(fg)=fdg+gdf$ it follows that $C_W^{\an}(\grT,\Cloc)$ is in fact an $\Cloc$-algebra.
\begin{remark} One has $C_W^{\an}(T,\Cloc)=C_{W_\Cloc}^{\an}(T,\Cloc)$ for $W_\Cloc:=W\otimes_\Lloc \Cloc$. The distinction between $\Lloc$ and $\Cloc$ will be important later, as we will see that $\CharVar{\grT}_W(\Cloc)$ are the closed points in $\Cloc$ of a rigid analytic variety defined over $\Lloc$. 
\end{remark}

\begin{example}\label{exmpl:DistrL}(Schneider-Teitelbaum \cite{ST})
Let $\Q_p\subset  \Lloc\subset \Cloc$ with $\Lloc$ finite over $\Qp$. The valuation ring $\grT:=\sO_\Lloc$ is a free $\Zp$-module of finite rank with Lie algebra $\frt=\Lloc$. We consider the one-dimensional subspace
$W:=\Hom_{\sO_\Lloc}(\sO_\Lloc,\Lloc)\subset\Hom_{\Zp}(\grT,\Lloc)$.
In this case, we have
\begin{align*}
	C_W^{\an}(\grT,\Cloc)&= C^{\Llocan}(\sO_\Lloc,\Cloc)\\
	\CharVar{(\sO_\Lloc)}_{W}(\Cloc)&= \CharVar{(\sO_\Lloc)}^{\Llocan}(\Cloc),
\end{align*}
where  $C^{\Llocan}(\sO_\Lloc,\Cloc)$ denotes the space of all locally $\Lloc$-analytic functions on $\sO_\Lloc$ and $\CharVar{(\sO_\Lloc)}^{\Llocan}(\Cloc)$ is the group of locally $\Lloc$-analytic characters studied in \cite{ST}.
\end{example}

\section{$W$-analytic character varieties}
In this section we show that the character group $\CharVar{\grT}_W(\Cloc)$ is the set of closed points  of a rigid analytic variety over $\Lloc$. Note that classical rigid analytic varieties can be seen as a full subcategory of adic spaces.

As a preparation, we show that for characters it suffices to check the analyticity condition at $0\in \grT$:
\begin{proposition}\label{lem:char-diagram} The group $\CharVar{\grT}_W(\Cloc)$ sits in a cartesian diagram 
\begin{equation*}
    \xymatrix{\CharVar{\grT}_W(\Cloc)\ar[r]\ar[d]&\CharVar{\grT}(\Cloc)\ar[d]^{(d-)|_0}\\
W_C\ar[r]& \Hom_{\Zp}(\grT,\Cloc),}
\end{equation*}
where the right vertical arrow is the map $\chi \mapsto (d\chi)|_0$, which assigns to $\chi$ the value of $d\chi$ at $0\in \grT$.
\end{proposition}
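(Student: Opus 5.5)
The plan is to show that for a locally analytic character $\chi\in\CharVar{\grT}(\Cloc)$ the differential $d\chi$ is determined by its value at $0$ through the identity
\begin{equation*}
d\chi = \chi\cdot (d\chi)|_0 \quad\text{in } C^{\an}(\grT,\Cloc)\otimes_{\Qp}\frt^\vee .
\end{equation*}
Granting this, the cartesian property follows directly. If $\chi\in\CharVar{\grT}_W(\Cloc)$, then $d\chi\in C^{\an}(\grT,\Cloc)\otimes_\Lloc W$, and evaluating at $0$ gives $(d\chi)|_0\in \Cloc\otimes_\Lloc W=W_\Cloc$. Conversely, if $(d\chi)|_0\in W_\Cloc$, the identity shows $d\chi=\chi\otimes (d\chi)|_0$ lies in $C^{\an}(\grT,\Cloc)\otimes_\Lloc W$, so $\chi$ is $W$-analytic. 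Since $W_\Cloc\to\Hom_{\Zp}(\grT,\Cloc)$ is injective, the fibre product is just the subset of characters with $(d\chi)|_0\in W_\Cloc$, which is exactly $\CharVar{\grT}_W(\Cloc)$.

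To prove the identity, I would compute the derivative of $\chi$ at an arbitrary point $t\in\grT$ in direction $v\in\frt$. Using $\chi(t+s)=\chi(t)\chi(s)$ for $s$ near $0$, the derivative at $t$ is
\begin{equation*}
\lim_{h\to 0}\frac{\chi(t)\bigl(\chi(hv)-1\bigr)}{h}=\chi(t)\,(d\chi)|_0(v),
\end{equation*}
where $h\in\Zp$ and $hv\in\grT$ after scaling $v$ into $\grT$. Here I am using that $d$ is the differential of locally $\Qp$-analytic functions, i.e.\ the directional derivatives along $\frt$, computed locally in a chart around each point. Translation invariance of $d$ can be checked on power series expansions around $t$ versus around $0$.

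One point needs care. The phrase $C^{\an}(\grT,\Cloc)\otimes_\Lloc W$ should be read as a subspace of $C^{\an}(\grT,\Cloc)\otimes_{\Qp}\frt^\vee\cong \Hom_{\Qp}(\frt,C^{\an}(\grT,\Cloc))$, via the map $f\otimes w\mapsto (v\mapsto f\,w(v))$. With this reading, I need to check that a form $\omega$ lies in this subspace if and only if its value at each point lies in $W_\Cloc$. In general this requires a finite-dimensionality argument: $W$ is finite-dimensional, so I would choose a basis and complete it to handle the quotient. For the forms $\chi\otimes\lambda$ occurring here, the membership is immediate, because $\chi\otimes\lambda$ with $\lambda=\sum c_i w_i$ equals $\sum (c_i\chi)\otimes w_i$.

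The main obstacle is the converse direction. There I must extract $(d\chi)|_0\in W_\Cloc$ from $d\chi\in C^{\an}\otimes_\Lloc W$. Evaluation at $0$ is a $\Cloc$-linear map $C^{\an}(\grT,\Cloc)\to\Cloc$. Tensoring it with $W$ over $\Lloc$ therefore sends $C^{\an}\otimes_\Lloc W$ into $\Cloc\otimes_\Lloc W=W_\Cloc$, compatibly with the embedding into $\Hom_{\Zp}(\grT,\Cloc)$. This settles that direction as well.
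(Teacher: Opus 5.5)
Your proposal is correct and follows the paper's argument. The key identity $(d\chi)|_g=\chi(g)\,(d\chi)|_0$ reduces $W$-analyticity of a character to the single condition $(d\chi)|_0\in W_\Cloc$, exactly as in the paper's proof; your extra checks (injectivity of $W_\Cloc\to\Hom_{\Zp}(\grT,\Cloc)$, and applying evaluation at $0$ to $C^{\an}(\grT,\Cloc)\otimes_\Lloc W$) only make explicit what the paper leaves implicit.
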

\begin{proof}

For $g\in \grT$, we have
\[
	(d\chi)|_g=\chi(g) \cdot (d\chi)|_0,
\]
hence $d\chi \in C^{\an}(\grT,\Cloc)\otimes_\Cloc W_C$ if and only if $(d\chi)|_0\in W_C$. This shows
\[
	\CharVar{\grT}_W(\Cloc)=\{ \chi \in \CharVar{\grT}(\Cloc) \mid (d\chi)|_0\in W_C\},
\]
and the Lemma follows.
\end{proof}
The group $\CharVar{\grT}(\Cloc)$ of locally $\Qp$-analytic characters is the group of $\Cloc$-valued points of a rigid analytic variety. The following theorem shows that this result generalizes to $W$-analytic character groups:
\begin{theorem}\label{thm:AdicCharVariety}
Let $\grT$ be a free $\Zp$-module of finite rank and $W\subset \Hom_{\Zp}(\grT, \Lloc)$ be a $\Lloc$-subvector space. Then the character group $\CharVar{\grT}_W(\Cloc)$ is the set of $\Cloc$-valued points of a rigid analytic space $\CharVar{\grT}_W$ over $\Lloc$. The rigid analytic space $\CharVar{\grT}_W$ fits into a cartesian diagram
\begin{equation}\label{eq:FiberProd}
    \xymatrix{\CharVar{\grT}_W\ar[r]\ar[d]&	\Hom_{\Zp}(\grT,\Zp)\otimes_{\Zp} (\Gmf)_\eta^{\rig}\ar[d]^{\log}\\
W\otimes_\Lloc \bbA^1 \ar[r]& \Hom_{\Zp}(\grT,\Lloc)\otimes_{\Lloc}\bbA^1=\Hom_{\Zp}(\grT,\Zp)\otimes_{\Zp}\bbA^1,}
\end{equation}
where $\bbA^1$ denotes the rigid analytic affine space over $\Lloc$ and $(\Gmf)_\eta^{\rig}$ denotes the rigid analytic generic fiber of the formal multiplicative group $\Gmf$ over $\Lloc$.
\end{theorem}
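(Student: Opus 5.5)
We define $\CharVar{\grT}_W$ \emph{as} the fiber product in \eqref{eq:FiberProd}, and then identify its $\Cloc$-points with $\CharVar{\grT}_W(\Cloc)$ using Proposition \ref{lem:char-diagram}. Fix a $\Zp$-basis $e_1,\dots,e_n$ of $\grT$, so that $\Hom_{\Zp}(\grT,\Zp)\otimes_{\Zp}(\Gmf)^{\rig}_\eta\cong ((\Gmf)^{\rig}_\eta)^n$, the $n$-fold product of open unit discs $\{|z_i|<1\}$ with coordinate $z_i=t_i-1$. The right vertical map is $\log$ applied coordinatewise, i.e.\ $(t_i)\mapsto(\log t_i)$, and it is a morphism of rigid analytic groups over $\Qp$, hence over $\Lloc$. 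The bottom map is the closed immersion of the linear subspace $W\otimes_\Lloc\bbA^1\hookrightarrow\Hom_{\Zp}(\grT,\Lloc)\otimes_\Lloc\bbA^1$. Fiber products of rigid spaces (or adic spaces locally of finite type) exist. Since the bottom arrow is a closed immersion, the fiber product is simply the closed analytic subspace of the polydisc cut out by the linear equations $\ell(\log t_1,\dots,\log t_n)=0$, one for each linear form $\ell$ vanishing on $W$. This shows that $\CharVar{\grT}_W$ is a rigid analytic space over $\Lloc$. It is in fact a closed subgroup, because $\log$ is a homomorphism.

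Next we compute $\Cloc$-points. The points of the upper right corner are $\Hom_{\Zp}(\grT,\mathfrak m_\Cloc)$ with $\mathfrak m_\Cloc$ the maximal ideal of $\sO_\Cloc$, i.e.\ tuples $(t_i)$ with $|t_i-1|<1$. The key step is the classical fact, due to Schneider--Teitelbaum in the case $\grT=\Zp^n$, that
\[
\CharVar{\grT}(\Cloc)\xrightarrow{\sim}\Hom_{\Zp}(\grT,\Zp)\otimes(\Gmf)^{\rig}_\eta(\Cloc),\qquad \chi\mapsto(\chi(e_i))_i.
\]
To prove it, first reduce to $n=1$ using $\chi(\sum a_ie_i)=\prod\chi(e_i)^{a_i}$. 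Continuity forces $\chi(p^k)\to1$. Since $\chi(1)^{p^k}=\chi(p^k)$, we get $|\chi(1)-1|<1$: otherwise the residue of $\chi(1)$ would be a root of unity of order prime to $p$ that is distinct from $1$, contradicting $\chi(1)^{p^k}\to1$. Conversely, for $|t-1|<1$ the character $a\mapsto t^a=\sum\binom{a}{k}(t-1)^k$ is locally analytic, because it is analytic on $p^m\Zp$ for $m\gg0$ (write $t^{p^m}=\exp(p^m\log t)$ once $p^m\log t$ is small). Every continuous character is then locally analytic, so the map is a bijection.

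It remains to compare the two diagrams. Under this identification, $(d\chi)|_0$ is the map $e_i\mapsto\log\chi(e_i)$. Indeed, near $0$ we have $\chi(ae_i)=\exp(a\log\chi(e_i))$, so the derivative at $0$ in direction $e_i$ is $\log\chi(e_i)$. Hence the right vertical map of Proposition \ref{lem:char-diagram} agrees on $\Cloc$-points with the $\log$ of \eqref{eq:FiberProd}. The $\Cloc$-points of a fiber product are the fiber product of the $\Cloc$-points, so the cartesian square of Proposition \ref{lem:char-diagram} gives $\CharVar{\grT}_W(\Cloc)=\CharVar{\grT}_W(\Cloc\text{-points})$.

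The main obstacle is making the point identification rigorous when $\Cloc$ need not be finite over $\Lloc$. The classical points of a rigid space over $\Lloc$ are points over finite extensions, whereas $\Cloc$ may be large. I would handle this by interpreting ``$\Cloc$-valued points'' as morphisms $\mathrm{Spa}(\Cloc,\sO_\Cloc)\to\CharVar{\grT}_W$, which is equivalent to working with the base change to $\Cloc$. Computed this way, the points of the open polydisc are exactly tuples in $\mathfrak m_\Cloc^n$, and the argument above goes through verbatim.
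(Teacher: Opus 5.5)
Your proposal is correct and follows the paper's proof. Both define $\CharVar{\grT}_W$ as the fiber product, identify the $\Cloc$-points of $\Hom_{\Zp}(\grT,\Zp)\otimes_{\Zp}(\Gmf)^{\rig}_\eta$ with $\CharVar{\grT}(\Cloc)$ compatibly with $\log$ and $(d-)|_0$, and conclude from the cartesian square of Proposition \ref{lem:char-diagram}. You additionally supply details the paper takes for granted: the classical bijection between continuous characters and points of the open polydisc, the formula $\chi(ae_i)=\exp(a\log\chi(e_i))$ valid near $0$ that justifies $(d\chi)|_0=\log$, and the reading of $\Cloc$-points as morphisms from $\mathrm{Spa}(\Cloc,\sO_\Cloc)$.
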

\begin{proof}
Let us define $\CharVar{\grT}_W$ as the rigid analytic space defined by the fiber product in  \eqref{eq:FiberProd}. We have to show that the group of $\Cloc$-valued points of  $\CharVar{\grT}_W$ is exactly the $W$-analytic character group.

 Note that the $\Cloc$-valued points of $\Hom_{\Zp}(\grT,\Zp)\otimes_{\Zp} (\Gmf)_\eta^{\rig}$ are exactly the locally $\Qp$-analytic characters of $\grT$. More precisely, the group of $\Cloc$-valued points of $(\Gmf)_\eta^{\rig}$ is given by
 \[
 	(\Gmf)_\eta^{\rig}(\Cloc)=\{z\in \Cloc:|z-1|<1\},
 \]
 and we get an isomorphism
 \begin{equation}\label{eq:QpCharIso}
 	\Hom_{\Zp}(\grT,\Zp)\otimes_{\Zp} (\Gmf)_\eta^{\rig}(\Cloc)\xrightarrow{\sim} \CharVar{\grT}(\Cloc), \quad \beta\otimes z \mapsto (g\mapsto \chi_{\beta\otimes z}(g)),
 \end{equation}
 with $\chi_{\beta\otimes z}(g):=z^{\beta(g)}=\exp(\beta(g)\cdot \log z)$.
 The formula 
 \[
 	(d\chi_{\beta\otimes z})|_0=\beta \cdot \log z \in \Hom_{\Zp}(\grT,\Cloc)
 \]
 shows that the isomorphism \eqref{eq:QpCharIso} fits into a commutative diagram
\begin{equation*}
    \xymatrix{\Hom_{\Zp}(\grT,\Zp)\otimes_{\Zp} (\Gmf)_\eta^{\rig}(\Cloc)\ar[r]^-{\sim}\ar[d]^{\id\otimes \log}&	\CharVar{\grT}(\Cloc)\ar[dl]^-{(d-)|_0}\\
\Hom_{\Zp}(\grT,\Zp)\otimes_{\Zp} \bbA^1 .}
\end{equation*}
  This shows that the $\Cloc$-valued points of the lower-left part of the diagram \eqref{eq:FiberProd} is isomorphic to:
\begin{equation*}
    \xymatrix{&	\CharVar{\grT}(\Cloc) \ar[d]^{(d-)|_0}\\
W_C \ar[r]& \Hom_{\Zp}(\grT,\Cloc),}
\end{equation*}
and the theorem follows from Proposition \ref{lem:char-diagram}.
\end{proof}
\begin{definition} \label{def:W-analytic-character-variety}
For a finite free $\Zp$-module $\grT$ and $W\subseteq\Hom_{\Zp}(\grT,\Lloc)$, we call the rigid analytic space $\CharVar{\grT}_W$ defined over $\Lloc$ the
\emph{$W$-analytic character variety} and we denote by $\CatCharVar_\Lloc$ the full subcategory of the category of rigid analytic group varieties over $\Lloc$ defined by these objects. 
\end{definition}

Note that we have the following functoriality: Let $T_1$ and $T_2$ be finite free $\Zp$-modules together with $\Lloc$-subvector spaces  $W_1\subseteq \Hom_{\Zp}(T_1,\Lloc)$ and $W_2\subseteq \Hom_{\Zp}(T_2,\Lloc)$. Any morphism $\varphi\colon T_2\to T_1$ such that $\varphi^*\colon \Hom_{\Zp}(T_1,\Lloc)\to  \Hom_{\Zp}(T_2,\Lloc)$ satisfies $\varphi^*(W_1)\subseteq W_2$ induces a morphism in $\CatCharVar_\Lloc$:
\[
	\CharVar{(\grT_1)}_{W_1}\to \CharVar{(\grT_2)}_{W_2}.
\]

\section{The Fourier transform} Let $\grT$ be a free $\Zp$-module of finite rank as before and $\frt:=\grT\otimes_{\Zp}\Qp$ be the Lie algebra of $\grT$ as a $\Qp$-analytic group. The algebra of $\Cloc$-valued distributions $D(\grT,\Cloc)$ is the dual of the topological $C$-algebra $C^{\an}(\grT,\Cloc)$ equipped with the strong dual topology. This is a commutative Fr\'echet $\Cloc$-algebra, where multiplication is given by the convolution product $\ast$.  

The \emph{Fourier transform} of $\lambda \in D(\grT,\Cloc)$ is the map
\[
	\cF_\lambda\colon \CharVar{\grT}(\Cp)\to \Cp,\quad \chi\mapsto \lambda(\chi).
\]
It is not difficult to see that $\cF_{\lambda\ast \mu}=\cF_\lambda \cdot \cF_\mu$ for all $\lambda,\mu \in D(\grT,\Cloc)$ (see \cite[Proposition 1.4]{ST}). The main result of $p$-adic Fourier theory for $\Qp$-analytic function is:
\begin{theorem}[Amice]\label{thm:Amice}
The Fourier transform induces an isomorphism of Fr\'echet algebras
\[	
	\cF\colon D(\grT,\Cloc)\xrightarrow{\sim} \cO(\CharVar{\grT}/\Cloc),
\]
where $\cO(\CharVar{\grT}/\Cloc)$ denotes the algebra of rigid analytic functions on the character variety $\CharVar{\grT}$.
\end{theorem}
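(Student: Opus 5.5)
Choose a $\Zp$-basis $e_1,\dots,e_d$ of $\grT$, so that $\grT\cong\Zp^d$. Under \eqref{eq:QpCharIso}, $\CharVar{\grT}$ becomes the open unit polydisc $(\Gmf)_\eta^{\rig}{}^d$, with coordinates $z_i=\chi(e_i)$. Write $z_i=1+t_i$ with $|t_i|<1$. The plan is to prove that $\cF$ identifies $D(\grT,\Cloc)$ with $\cO(\CharVar{\grT}/\Cloc)$ for this polydisc. This is the multivariable Amice transform, and I would reduce it to Mahler expansions.

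\textbf{Step 1 (locally analytic functions via Mahler coefficients).} Every continuous $f\colon\Zp^d\to\Cloc$ has a Mahler expansion $f(x)=\sum_{n\in\NN^d}a_n\binom{x}{n}$, where $\binom{x}{n}=\prod_i\binom{x_i}{n_i}$ and $a_n\to 0$. The needed input is Amice's characterisation of local analyticity: $f\in C^{\an}(\grT,\Cloc)$ if and only if $|a_n|r^{|n|}\to 0$ for some $r>1$. For each $r>1$ the space $C_r$ of $f$ with $\sup_n|a_n|r^{|n|}<\infty$ is a Banach space. As an LF-space, $C^{\an}(\grT,\Cloc)=\varinjlim_r C_r$, and this colimit topology agrees with the one defined by $h$-analytic functions on cosets of $p^h\grT$. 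I expect this step to be the main obstacle. Proving the equivalence of topologies requires comparing the norms of $\binom{x}{n}$ on cosets $a+p^h\Zp$ with the growth $r^{|n|}$. I would first do the case $d=1$, using the estimate $|n!|\sim p^{-n/(p-1)}$ and the expansion of $\binom{x}{n}$ in powers of $(x-a)/p^h$. The case $d\ge1$ then follows by taking completed tensor products.

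\textbf{Step 2 (the transform).} For $\lambda\in D(\grT,\Cloc)$, put $b_n:=\lambda(\binom{x}{n})$. By the binomial theorem, $\chi_z(x)=\prod_i(1+t_i)^{x_i}=\sum_n\binom{x}{n}t^n$. This series converges in $C^{\an}$ when $|t_i|<1$, because $|t^n|r^{|n|}\to0$ for $r<|t|^{-1}$. Continuity of $\lambda$ therefore gives
\[
\cF_\lambda(\chi_z)=\sum_n b_n t^n .
\]
Since $\lambda$ is continuous on each $C_r$, the bound $|b_n|\le \|\lambda\|_r r^{|n|}$ holds for every $r>1$. This is exactly the condition for $\sum b_nt^n$ to converge on the whole open polydisc. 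So $\cF_\lambda$ is a rigid analytic function. Conversely, a power series $\sum b_nt^n$ converging on the open polydisc satisfies $|b_n|s^{-|n|}\to0$ for every $s>1$. It therefore defines a continuous functional on every $C_r$ via $a\mapsto\sum a_nb_n$, and hence a distribution. These two constructions are mutually inverse, so $\cF$ is bijective.

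\textbf{Step 3 (algebra and topology).} Multiplicativity $\cF_{\lambda\ast\mu}=\cF_\lambda\cF_\mu$ is quoted above from \cite[Proposition 1.4]{ST}, and $\cF$ is clearly $\Cloc$-linear, so $\cF$ is an algebra isomorphism. For the topology, the strong dual of $\varinjlim_r C_r$ is $\varprojlim_r C_r^\vee$. Here $C_r^\vee$ is the space of sequences with $|b_n|r^{-|n|}$ bounded. On the function side, $\cO$ of the open polydisc is the projective limit of the affinoid algebras of closed polydiscs of radius $<1$. The two systems of norms are cofinal in each other, so $\cF$ is a topological isomorphism of Fréchet algebras.
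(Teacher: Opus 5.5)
Your proposal is correct. The paper gives no argument of its own here; it only cites \cite[Theorem 1.3]{Amice78} and \cite[Theorem 2.2]{ST}. You are therefore supplying an actual proof, namely the classical one via Mahler expansions.

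Compared with the bare citation, your route gives an explicit dictionary. The Taylor coefficients of $\cF_\lambda$ in $t_i=z_i-1$ are the moments $\lambda(\binom{x}{n})$, and the dual weighted norms $\sup_n|b_n|r^{-|n|}$ are exactly the Gauss norms on closed polydiscs of radius $1/r$. This makes the topological part transparent. The genuinely analytic input is still quoted, however. That input is Amice's characterisation of local analyticity by $|a_n|r^{|n|}\to 0$ for some $r>1$, together with the comparison of the resulting LF-topology with the one given by $h$-analytic functions on cosets of $p^h\grT$ (your Step 1). So in substance you reduce the statement to the same source the paper cites.

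Two points should be tightened.

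(i) Define $C_r$ by the condition $|a_n|r^{|n|}\to 0$ instead of boundedness. For your $\ell^\infty$-type $C_r$, the dual is strictly larger than the space of sequences with $|b_n|r^{-|n|}$ bounded whenever $\Cloc$ is spherically complete (e.g.\ finite over $\Qp$). The reason is that Hahn--Banach then gives nonzero functionals vanishing on the $c_0$-subspace. The $c_0$-version also makes the Mahler series of $f\in C_r$ converge in $C_r$ itself, which your ``mutually inverse'' claim tacitly uses. It is also the version that behaves well under completed tensor products in the reduction from $d$ to $1$. Since the two families of spaces interleave, the limits are unchanged.

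(ii) Identifying the strong dual of $\varinjlim_r C_r$ with $\varprojlim_r C_r^\vee$ \emph{as topological spaces} is not automatic for LB-spaces. It holds here because the transition maps $C_r\to C_{r'}$ for $r'<r$ are compact (the weights $(r'/r)^{|n|}$ tend to $0$), i.e.\ $C^{\an}(\grT,\Cloc)$ is of compact type. You should state this explicitly.
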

\begin{proof}
See \cite[Theorem 1.3]{Amice78} or \cite[Theorem 2.2]{ST}.
\end{proof}

The goal of this subsection is to generalize the theorem by Amice to $W$-analytic distributions. As before, let $W\subseteq \Hom_{\Zp}(\grT,\Lloc)$ be an $\Lloc$-subvector space.
\begin{definition}
	The topological $\Cloc$-algebra of $\Cloc$-valued $W$-analytic distributions is the strong continuous dual $D_W(\grT,\Cloc):=C^{\an}_W(\grT,\Cloc)'$ of the topological $\Cloc$-algebra $C^{\an}_W(\grT,\Cloc)$. The product is given by the convolution of distributions.
\end{definition}
The following is a generalization of \cite[Proposition 1.4]{ST} to $W$-analytic distributions:
\begin{proposition}
For $\lambda \in D_W(\grT,\Cloc)$, we have $\lambda=0$ if and only if $\cF_\lambda|_{\CharVar{\grT}_W(\Cloc)}=0$.
\end{proposition}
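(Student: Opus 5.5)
The direction ``$\lambda=0\Rightarrow\cF_\lambda=0$'' is trivial, so we only need: if $\cF_\lambda$ vanishes on $\CharVar{\grT}_W(\Cloc)$, then $\lambda$ vanishes on all of $C^{\an}_W(\grT,\Cloc)$. As in the proof of \cite[Proposition 1.4]{ST}, we will show that the $\Cloc$-linear span of $\CharVar{\grT}_W(\Cloc)$ is dense in $C^{\an}_W(\grT,\Cloc)$. Since $\lambda$ is continuous, the claim then follows.

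First we make the structure of $W$-analytic functions explicit. Choose a $\Zp$-basis of $\grT$, so that $\Hom_{\Zp}(\grT,\Cloc)\cong\Cloc^d$. Then choose an $\Cloc$-basis $w_1,\dots,w_r$ of $W_\Cloc$ and extend it to a basis $w_1,\dots,w_d$ of $\Hom_{\Zp}(\grT,\Cloc)$. Using the remark $C^{\an}_W=C^{\an}_{W_\Cloc}$, we may work over $\Cloc$. The functions $w_i$ are linear coordinates on $\frt\otimes\Cloc$. On any small ball $a+p^n\grT$, a locally analytic $f$ is a convergent power series in $w_1,\dots,w_d$ (after the linear change of coordinates, which only alters radii by a bounded factor). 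The condition $df\in C^{\an}\otimes W_\Cloc$ says that $\partial f/\partial w_j=0$ for $j>r$. So locally $f$ is a convergent power series in $w_1,\dots,w_r$ alone, and $C^{\an}_W(\grT,\Cloc)$ is the locally convex inductive limit over $n$ of products over cosets of such Banach spaces of power series in $r$ variables.

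Next, the characters available. For $\mu\in W_\Cloc$ small enough, $\chi_\mu(g)=\exp(\mu(g))$ is a locally analytic character with $(d\chi_\mu)|_0=\mu\in W_\Cloc$, so by Proposition \ref{lem:char-diagram} it lies in $\CharVar{\grT}_W(\Cloc)$. We also have all finite order characters, which satisfy $d\chi=0$. Products of these are again in $\CharVar{\grT}_W(\Cloc)$. The finite order characters factoring through $\grT/p^n\grT$ span the locally constant functions on cosets of $p^n\grT$, so they let us isolate individual cosets $a+p^n\grT$. On a fixed coset, $\lambda(\mathbf 1_{a+p^n\grT}\exp(\mu))=0$ for all small $\mu=\sum_{i\le r}t_iw_i$. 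This expression is an analytic function of $(t_1,\dots,t_r)$ in a polydisc, and its Taylor coefficients are $\lambda(\mathbf 1_{a+p^n\grT}\,w^{k})/k!$ up to the translation by $a$. Hence $\lambda$ kills every monomial $\mathbf 1_{a+p^n\grT}w_1^{k_1}\cdots w_r^{k_r}$. These span a dense subspace of each Banach piece, so $\lambda=0$.

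The main obstacle is the analytic bookkeeping. We must verify that the change of coordinates to the basis $w_i$ is compatible with the Banach norms defining $C^{\an}$, so that the $W$-analytic subspace really is the closed subspace of power series in $w_1,\dots,w_r$, with monomials dense in it. We must also justify differentiating $t\mapsto\lambda(\mathbf 1\exp(\sum t_iw_i))$ term by term, which requires the exponential series to converge in the relevant Banach norm for small $t$; continuity of $\lambda$ then lets us interchange $\lambda$ with the sum. I would first prove the density of monomials using the explicit power series description on the balls, and then handle the interchange by continuity on each Banach step of the inductive limit.
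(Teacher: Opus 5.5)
Your proof is correct and is essentially the paper's argument, which is itself modelled on \cite[Proposition 1.4]{ST}. In both, finite-order characters isolate the cosets $a+p^n\grT$; expanding $\lambda(\mathbf{1}_{a+p^n\grT}\exp(\cdot))$ as a power series then shows that $\lambda$ kills every polynomial in $W_\Cloc$ times an indicator function, and these functions are dense in $C^{\an}_W(\grT,\Cloc)$. The only difference is that you expand $\exp(\sum_i t_iw_i)$ in several variables and get all monomials at once. The paper instead uses $\exp(z\beta)$ for a single $\beta\in W$ and then the fact that the powers $\beta^n$ span $\Sym^\bullet_\Cloc W_\Cloc$ \cite{Schinzel}.

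One caveat applies equally to you and to the paper: invoking finite-order characters tacitly assumes that $\Cloc$ contains the $p$-power roots of unity, and you should state this. The hypothesis is genuinely needed. For $\Cloc=\Qp$, $W=0$ and $\grT=\Zp$, the only $\Qp$-valued $W$-analytic characters factor through $\mu_{p^\infty}(\Qp)\subseteq\{\pm1\}$. Such a character $\chi$ satisfies $\chi(p)=1$, so $\delta_0-\delta_p$ kills all of them. Yet $\delta_0-\delta_p$ does not kill $\mathbf{1}_{p^2\Zp}$, so it is a nonzero element of $D_W(\grT,\Cloc)$ with vanishing Fourier transform on $\CharVar{\grT}_W(\Cloc)$.
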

\begin{proof}
We follow the proof of \cite[Proposition 1.4]{ST}. For any open subset $U\subseteq \grT$ and any function $f$ on $\grT$, let us write $f|U$ for the function which is the extension of $f|_U$ by zero. Any element $h\in \Sym^\bullet_\Cloc (W\otimes_\Lloc\Cloc)$ can be seen as a locally analytic function $h \colon \grT\to \Cloc$. In fact, by the definition of $C_W^{\an}(\grT,\Cloc)$, the set of functions $h|U$ where $U$ runs through open subsets of $\grT$ and $h$ through $\Sym^\bullet_\Cloc( W\otimes_\Lloc\Cloc)$ is dense in $C_W^{\an}(\grT,\Cloc)$.
Suppose now that $\cF_\lambda|_{\CharVar{\grT}_W(\Cloc)}=0$, i.e. that $\lambda(\chi)=0$ for all $\chi\in \CharVar{\grT}_W(\Cloc)$. Using the character theory of finite abelian groups, we conclude
\[
	\lambda(\chi|U)=0, \quad \text{ for all open $U\subseteq \grT$ and all } \chi \in  \CharVar{\grT}_W(\Cloc).
\]
For any $\beta\in W$ and $z\in \Cloc$ of sufficiently small norm, we consider the character $\chi_{\beta\otimes z}\in \CharVar{\grT}_W(\Cloc)$, and obtain by continuity for $U\subseteq \grT$ open:
\[
	0=\lambda(\chi_{\beta\otimes z}|U)=\sum_{n\geq 0}\frac{z^n}{n!}\lambda(\beta^n|U).
\]
Since the right hand side is analytic in $z$ in a neighbourhood of $0\in \Cloc$, we deduce $\lambda(\beta^n|U)=0$ for all $\beta\in W$, $n\geq 0$ and $U\subseteq \grT$ open. Now note that over a field   $k$ of characteristic zero and a finite dimensional $k$-vector space $V$ any $\Sym^\bullet_k V$ is spanned over $k$ by the set $\beta^n$ with $\beta \in V$ and $n\geq 0$, see for example \cite[Theorem 1]{Schinzel}. Applying this to $k=\Cloc$ and $V=W\otimes_\Lloc\Cloc$, we deduce that $\lambda(h|U)=0$ for any $h\in \Sym^\bullet_\Cloc( W\otimes_\Lloc\Cloc)$ and any $U\subseteq \grT$ open. Since these functions are dense in $C_W^{\an}(\grT,\Cloc)$, we deduce $\lambda=0$.
\end{proof}
As an immediate consequence one gets:
\begin{corollary}\label{cor:Ideal-IW}
	Let $W\subseteq \Hom_{\Zp}(\grT,\Lloc)$ be an $\Lloc$-subvector space. We define
	\[
		I_W:=\ker( D(\grT,\Cloc)\to D_W(\grT,\Cloc) ),
	\]
	where the surjection is induced by the inclusion $C^{\an}_W(\grT,\Cloc)\subseteq C^{\an}(\grT,\Cloc)$. Then
	\[
		I_W=\{\lambda\in D(\grT,\Cloc): \cF_\lambda|_{\CharVar{\grT}_W(\Cloc)}=0\}.
	\]
\end{corollary}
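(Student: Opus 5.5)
The plan is to reduce the corollary to the preceding proposition by unwinding the definition of the restriction map. Write $\rho\colon D(\grT,\Cloc)\to D_W(\grT,\Cloc)$ for the map $\lambda\mapsto \lambda|_{C^{\an}_W(\grT,\Cloc)}$ induced by the inclusion $C^{\an}_W(\grT,\Cloc)\subseteq C^{\an}(\grT,\Cloc)$. Then $I_W=\ker\rho$ by definition, and the assertion $I_W=\{\lambda : \cF_\lambda|_{\CharVar{\grT}_W(\Cloc)}=0\}$ should follow directly from the proposition applied to $\rho(\lambda)$.

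The one compatibility to check is that Fourier transforms commute with restriction on $W$-analytic characters. For $\chi\in\CharVar{\grT}_W(\Cloc)=\CharVar{\grT}(\Cloc)\cap C^{\an}_W(\grT,\Cloc)$ we have
\[
\cF_{\rho(\lambda)}(\chi)=\rho(\lambda)(\chi)=\lambda(\chi)=\cF_\lambda(\chi).
\]
This uses only that $\chi$ lies in $C^{\an}_W$, which is built into the definition of the $W$-analytic character group. Hence $\cF_{\rho(\lambda)}|_{\CharVar{\grT}_W(\Cloc)}=\cF_\lambda|_{\CharVar{\grT}_W(\Cloc)}$.

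Both inclusions now follow. If $\lambda\in I_W$, then $\rho(\lambda)=0$, so $\cF_\lambda$ vanishes on $\CharVar{\grT}_W(\Cloc)$. Conversely, suppose $\cF_\lambda$ vanishes on $\CharVar{\grT}_W(\Cloc)$. Then $\cF_{\rho(\lambda)}$ vanishes there as well, so the proposition gives $\rho(\lambda)=0$, that is, $\lambda\in I_W$.

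The proof also needs $\rho$ to be well defined, and the statement calls it a surjection. Well-definedness requires that $\lambda|_{C^{\an}_W}$ is continuous for the topology on $C^{\an}_W(\grT,\Cloc)$. I would take that topology to be the subspace topology, which makes continuity automatic. Surjectivity would come from Hahn–Banach for locally convex spaces over spherically complete fields, or from the closedness of $C^{\an}_W$ inside the compact-type space $C^{\an}(\grT,\Cloc)$. However, surjectivity plays no role in identifying the kernel. The only real input is therefore the preceding proposition, and no step here is an obstacle.
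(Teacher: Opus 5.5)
Your proposal is correct and matches the paper, which records this corollary as an immediate consequence of the preceding proposition. You make that explicit: the Fourier transforms of $\lambda$ and of its restriction to $C^{\an}_W(\grT,\Cloc)$ agree on $\CharVar{\grT}_W(\Cloc)$, and the proposition then applies to the restricted distribution; as you note, surjectivity of the restriction map plays no role in identifying the kernel.
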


As we have identified the Lie algebra $\frt$ of $\grT$ with $\grT\otimes_{\Zp}\Qp$, we have an action of $\frt=\grT\otimes_{\Zp}\Qp$ on $C^{\an}(\grT,\Cloc)$  in the usual way: for $\Xi\in \frt$ we let
\begin{equation*}
    \Xi:C^{\an}(\grT,\Cloc)\to C^{\an}(\grT,\Cloc)
\end{equation*}
be the composition of 
\begin{equation*}
    	d\colon C^{\an}(\grT,\Cloc)\to C^{\an}(\grT,\Cloc)\otimes_\Cloc \Hom_{\Zp}(\grT,\Cloc)
\end{equation*}
with the evaluation map 
\begin{equation*}
    \Hom_{\Zp}(\grT,\Cloc)\times \frt\to \Cloc. 
\end{equation*}
The resulting action of $\Xi\in \frt$ on $f\in C^{\an}(\grT,\Cloc)$ is denoted by $\Xi.f$. This action allows us to define a map
\[
	\iota\colon \frt \to D(\sG,\Cloc),\quad \Xi\mapsto (f\mapsto (\Xi.f)(0)).
\]
\begin{proposition}\label{lem:eq-for-charW}
	Let $W\subseteq \Hom_{\Zp}(\grT,\Lloc)=\Hom_{\Qp}(\frt,\Lloc)$ be a $\Lloc$-subvector space, and consider the orthogonal complement of $W$ in $\frt$:
	 \[
	 W^\perp:=\{ \Xi\in \frt: w(\Xi)=0 \text{ for all } w\in W  \}\subseteq \frt
	 \] 
	 We have
	 \[
		\CharVar{\grT}_W(\Cloc)=\{\chi\in \CharVar{\grT}(\Cloc):\cF_{\iota(\Xi)}(\chi)=0 \text{ for all } \Xi\in W^\perp\}.
	\]
\end{proposition}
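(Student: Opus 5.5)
The plan is to reduce everything to Proposition \ref{lem:char-diagram}, which characterizes $\CharVar{\grT}_W(\Cloc)$ as those $\chi\in\CharVar{\grT}(\Cloc)$ with $(d\chi)|_0\in W_\Cloc:=W\otimes_\Lloc\Cloc$. So it suffices to show, for a locally analytic character $\chi$, that $(d\chi)|_0\in W_\Cloc$ holds exactly when $\cF_{\iota(\Xi)}(\chi)=0$ for all $\Xi\in W^\perp$.

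First I unwind the Fourier transform of $\iota(\Xi)$. By definition $\cF_{\iota(\Xi)}(\chi)=\iota(\Xi)(\chi)=(\Xi.\chi)(0)$. Since $\Xi.\chi$ is $d\chi$ paired with $\Xi$, this equals $\langle (d\chi)|_0,\Xi\rangle$. Here $(d\chi)|_0\in\Hom_{\Zp}(\grT,\Cloc)=\Hom_{\Qp}(\frt,\Cloc)$ is evaluated on $\Xi\in\frt$. The condition in the statement therefore says that the linear form $(d\chi)|_0$ vanishes on $W^\perp\subseteq\frt$.

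The remaining step is the linear-algebra claim
\[
\{\omega\in\Hom_{\Qp}(\frt,\Cloc):\omega(W^\perp)=0\}=W_\Cloc .
\]
The inclusion $\supseteq$ is immediate. For $\subseteq$, identify $\Hom_{\Qp}(\frt,\Cloc)=\frt^\vee\otimes_{\Qp}\Cloc$, with $\frt^\vee=\Hom_{\Qp}(\frt,\Qp)$. I expect this to be the main subtlety. $W$ is only an $\Lloc$-subspace of $\frt^\vee\otimes_\Qp\Lloc$, not one defined over $\Qp$, and $W^\perp$ is taken inside the $\Qp$-vector space $\frt$. So the annihilator of $W^\perp$ is the $\Cloc$-span of the smallest $\Qp$-rational subspace containing $W$, which can be strictly larger than $W_\Cloc$.

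For example, take $\frt=\Lloc$ with $\Lloc/\Qp$ of degree $>1$ and $W$ the one-dimensional space of $\Lloc$-linear maps, as in Example \ref{exmpl:DistrL}. Then the $\Qp$-points of $W^\perp$ are zero, and the condition would be empty. To handle this I will interpret $W^\perp$ inside $\frt\otimes_{\Qp}\Lloc$, and read $\iota$ as extended $\Lloc$-linearly, via $\Xi\otimes a\mapsto a\,\iota(\Xi)$. This extension is harmless because $D(\grT,\Cloc)$ is a $\Cloc$-vector space and $\cF$ is linear.

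With that reading, the claim becomes the standard duality between $W$ and its annihilator. Since $\frt\otimes_\Qp\Lloc$ is a finite-dimensional $\Lloc$-vector space with dual $\Hom_\Qp(\frt,\Lloc)$, we have $(W^\perp)^\perp=W$ over $\Lloc$. Annihilators of finite-dimensional subspaces commute with the flat base change $\Lloc\to\Cloc$, so the annihilator of $W^\perp\otimes_\Lloc\Cloc$ in $\Hom_\Qp(\frt,\Cloc)$ is $W_\Cloc$. Combining this with the computation $\cF_{\iota(\Xi)}(\chi)=(d\chi)|_0(\Xi)$ and Proposition \ref{lem:char-diagram} gives the stated equality.
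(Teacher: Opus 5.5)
Your proof is correct and is essentially the paper's argument: the paper likewise chains $\cF_{\iota(\Xi)}(\chi)=(\Xi.\chi)(0)=(d\chi)|_0(\Xi)$, the duality between $W$ and $W^\perp$, and Proposition~\ref{lem:char-diagram}. Your extra care in taking $W^\perp$ inside $\frt\otimes_{\Qp}\Lloc$, with $\iota$ extended linearly, is warranted: the paper uses this reading tacitly (its subsequent remark chooses an $\Lloc$-basis of $W^\perp$), and your example $\grT=\sO_\Lloc$ shows that the literal $\Qp$-rational version would make the condition vacuous.
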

\begin{proof}
This follows from the following equivalences for a character $\chi\in \CharVar{\grT}$:
\begin{align*}
	\cF_{\iota(\Xi)}(\chi)=0 \quad \forall \Xi\in W^\perp\quad
	&\Leftrightarrow\quad (\Xi.\chi)(0)=0, \quad \forall \Xi\in W^\perp\\
	&\Leftrightarrow\quad(d\chi)|_0\in W 
	\quad\Leftrightarrow\quad \chi \in \CharVar{\grT}_W(\Cloc),
\end{align*}
where the last equivalence follows from Proposition \ref{lem:char-diagram}.
\end{proof}

\begin{remark}
We already know from Theorem \ref{thm:AdicCharVariety} that $\CharVar{\grT}_W$ is a closed reduced rigid analytic subspace of the character variety $\CharVar{\grT}$. Let us choose an $\Lloc$-basis $\Xi_1,\dots,\Xi_r$ of $W^\perp$. Proposition \ref{lem:eq-for-charW} tells us more explicitly that $\CharVar{\grT}_W$ is cut out by the equations 
 \[
 \cF_{\iota(\Xi_1)}=0,\dots, \cF_{\iota(\Xi_r)}=0.
 \]
\end{remark}

One  can now prove a generalization of the theorem of Amice for $W$-analytic distributions:
\begin{theorem}\label{thm:pAdicFourierMainTheoremW}
	The Fourier transform induces an isomorphism of topological $C$-algebras
	\[
		\cF_W\colon D_W(\grT,\Cloc) \xrightarrow{\sim} \cO(\CharVar{\grT}_W/\Cloc), \quad \lambda\mapsto \cF_\lambda|_{\CharVar{\grT}_W},
	\]
	that fits into a commutative diagram
	\[
		\xymatrix{
			\cF\colon D(\grT,\Cloc) \ar[r]^{\cong}\ar[d] & \cO(\CharVar{\grT}/\Cloc)\ar[d] \\
			\cF_W\colon D_W(\grT,\Cloc) \ar[r]^{\cong} & \cO(\CharVar{\grT}_W/\Cloc).
		}
	\]
	Here, the left vertical map is induced by the inclusion $C_W^{\an}(\grT,\Cloc)\subseteq C^{\an}(\grT,\Cloc)$ and the right vertical map is induced by the closed immersion $\CharVar{\grT}_W\subseteq \CharVar{\grT}$.
\end{theorem}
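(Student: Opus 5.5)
Start from Amice's Theorem~\ref{thm:Amice} and compare two quotients of $D(\grT,\Cloc)\cong\cO(\CharVar{\grT}/\Cloc)$. On the left, the restriction map $D(\grT,\Cloc)\to D_W(\grT,\Cloc)$ is surjective by Hahn--Banach, since $C^{\an}_W(\grT,\Cloc)$ is a closed subspace of $C^{\an}(\grT,\Cloc)$: it is the kernel of the continuous map $f\mapsto(\Xi.f)_{\Xi\in W^\perp}$. Its kernel is $I_W$. On the right, restriction along the closed immersion $\CharVar{\grT}_W\subseteq\CharVar{\grT}$ of Theorem~\ref{thm:AdicCharVariety} is surjective, because $\CharVar{\grT}$ is Stein (a product of open polydiscs twisted by a finite group, i.e.\ a quasi-Stein space) and coherent ideal sheaves on quasi-Stein spaces have vanishing $H^1$ (Kiehl's Theorem B). Its kernel is the ideal $J_W$ of global functions vanishing on $\CharVar{\grT}_W$. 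The square commutes by definition. The plan is therefore to show $\cF(I_W)=J_W$; the isomorphism $\cF_W$ is then induced, and it is multiplicative because $\cF$ is.

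For $\cF(I_W)=J_W$, Corollary~\ref{cor:Ideal-IW} gives $I_W=\{\lambda:\cF_\lambda|_{\CharVar{\grT}_W(\Cloc)}=0\}$. A function on the rigid space $\CharVar{\grT}$ vanishes on the reduced closed subspace $\CharVar{\grT}_W$ exactly when it vanishes at all its points. The only subtlety is that the corollary tests $\Cloc$-points while the subspace is defined over $\Lloc$. I would handle this by passing to $\Cp$-points, using that $\Cloc$ is dense enough, or by applying the corollary with $\Cloc$ replaced by $\Cp$ and comparing the two via $D(\grT,\Cloc)\hat\otimes\Cp$.

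The main obstacle is reducedness. Theorem~\ref{thm:AdicCharVariety} defines $\CharVar{\grT}_W$ as a fiber product, so it is cut out by the equations $\cF_{\iota(\Xi_i)}=0$, and a priori this subspace could be nonreduced; then $J_W$ for the scheme-theoretic subspace would be smaller than the radical. I would show that $\log\colon\Gmf\to\bbA^1$ is étale, since its derivative $1/z$ is invertible on the open disc. Then $\CharVar{\grT}_W$ is étale over the smooth space $W\otimes\bbA^1$, hence smooth and in particular reduced. So functions vanishing pointwise lie in the ideal generated by the $\cF_{\iota(\Xi_i)}$, and the ideal of the fiber product equals the vanishing ideal.

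For topologies, both sides are Fréchet: $D_W$ is the quotient of the Fréchet space $D$ by the closed ideal $I_W$, provided the strong dual of a closed subspace carries the quotient topology, which holds for duals of compact-type (LB) spaces. $\cO(\CharVar{\grT}_W)$ is Fréchet as a quasi-Stein space. The induced continuous bijection is then a homeomorphism by the open mapping theorem.
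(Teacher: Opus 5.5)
Your proposal is correct and follows the paper's proof: Amice's theorem, Corollary~\ref{cor:Ideal-IW} to identify $\cF(I_W)$ with the functions vanishing on the reduced subspace $\CharVar{\grT}_W$, and exactness of global sections on the quasi-Stein space $\CharVar{\grT}$. You also supply justifications the paper omits, notably reducedness via \'etaleness of $\log$, surjectivity of $D\to D_W$, and the topological comparison via the open mapping theorem.

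One caution: of your two fixes for the $\Cloc$-points issue, only the base change to $\Cp$ works. Take $\Cloc=\Qp$ with $p$ odd, $\grT=\Zp^2$ and $W=\Qp e_1^*$. The Fourier transform $1-\chi(e_2)$ of $\delta_0-\delta_{e_2}$ vanishes on all of $\CharVar{\grT}_W(\Qp)$, because there $\chi(e_2)\in\mu_{p^\infty}(\Qp)=\{1\}$. Yet $\delta_0-\delta_{e_2}$ takes the value $1$ on $\mathbf{1}_{\Zp\times p\Zp}\in C^{\an}_W(\grT,\Qp)$. So Corollary~\ref{cor:Ideal-IW}, and tacitly the paper's proof as well, needs $\mu_{p^\infty}\subseteq\Cloc$ in general.
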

\begin{proof}
By \cite[9.5.2, Cor. 6]{BGR} and \cite[9.5.3, Prop. 4]{BGR}, the coherent sheaf $\cI_W\subseteq \cO_{\CharVar{\grT}}$ corresponding to the closed reduced subvariety $\CharVar{\grT}_W\subseteq \CharVar{\grT}$ consists of all local sections vanishing on the subvariety $\CharVar{\grT}_W\subseteq \CharVar{\grT}$. By Theorem \ref{thm:Amice} (the Theorem of Amice) and Corollary \ref{cor:Ideal-IW}, we have $\cI_W(\CharVar{\grT})=\cF(I_W)$. Since $\CharVar{\grT}$ is a Stein space, the global section functor is exact on coherent sheaves and we get
\[
	D_W(\grT,\Cloc)=D(\grT,\Cloc)/I_W\xrightarrow{\sim}\cO(\CharVar{\grT}/\Cloc)/\cI_W(\CharVar{\grT})\cong \cO(\CharVar{\grT}_W/C).\qedhere
\]
\end{proof}

The main theorem of Schneider-Teitelbaum on locally $\Lloc$-analytic $p$-adic Fourier theory is obtained as the special case with $\grT=\sO_\Lloc$ and $W=\Hom_{\Lloc}(\Lloc,\Cloc)\subseteq \Hom_{\Qp}(\Lloc,\Cloc)$:

\begin{corollary}[{\cite[Theorem 2.3]{ST}}]
Consider a finite field extension $\Lloc$ of $\Qp$, and set $T:=\sO_\Lloc$ and $W:=\Hom_{\Lloc}(\Lloc,\Lloc)\subseteq \Hom_{\Qp}(\Lloc,\Lloc)$. In this case, the isomorphism $F_W$ coincides with the isomorphism in \cite[Theorem 2.3]{ST}:
\[
	 D^{\Llocan}(\sO_\Lloc,\Cloc)\xrightarrow{\sim} \cO(\CharVar{(\sO_\Lloc)}^{\Llocan}/\Cloc).
\]
between locally $\Lloc$-analytic distributions and the corresponding character variety.
\end{corollary}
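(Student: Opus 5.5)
This corollary follows from Theorem \ref{thm:pAdicFourierMainTheoremW} once the objects on both sides are identified with those of \cite{ST}, and once the two Fourier maps are seen to be given by the same formula.

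\emph{Functions.} By Example \ref{exmpl:DistrL}, for $\grT=\sO_\Lloc$ and $W=\Hom_{\Lloc}(\Lloc,\Lloc)$ we have $C^{\an}_W(\grT,\Cloc)=C^{\Llocan}(\sO_\Lloc,\Cloc)$. To check this directly, write the differential of a locally $\Qp$-analytic function at a point as an element of
\[
\Hom_{\Qp}(\Lloc,\Cloc)\cong \prod_{\sigma\colon \Lloc\to \Cloc}\Cloc .
\]
The condition $df\in C^{\an}\otimes W$ says exactly that this differential lies in the identity-embedding component, i.e.\ that it is $\Lloc$-linear. That is the Cauchy--Riemann type criterion for local $\Lloc$-analyticity.

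\emph{Topologies and distributions.} One must also check that the topologies agree. Both spaces are closed subspaces of $C^{\an}(\sO_\Lloc,\Cloc)$ carrying the subspace locally convex inductive limit topology. In \cite{ST}, $C^{\Llocan}$ is defined as an inductive limit over $\Lloc$-analytic charts, and the inclusion into $C^{\an}$ is a closed embedding with the induced topology. Granting this, the strong duals agree: $D_W(\grT,\Cloc)=D^{\Llocan}(\sO_\Lloc,\Cloc)$.

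\emph{Character varieties.} By Example \ref{exmpl:DistrL}, $\CharVar{(\sO_\Lloc)}_W(\Cloc)=\CharVar{(\sO_\Lloc)}^{\Llocan}(\Cloc)$ on points. Both are reduced closed subvarieties of $\CharVar{\grT}$: on our side by Theorem \ref{thm:AdicCharVariety} and the remark following Proposition \ref{lem:eq-for-charW}; on the side of \cite{ST} by construction, where they define the variety as the locus in the $\Qp$-character variety cut out by the $\Lloc$-analyticity condition, via \cite[\S2]{ST}. Two reduced closed subvarieties with the same points coincide. Hence $\cO(\CharVar{\grT}_W/\Cloc)=\cO(\CharVar{(\sO_\Lloc)}^{\Llocan}/\Cloc)$.

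\emph{The maps.} Under these identifications, both Fourier maps are given by $\lambda\mapsto(\chi\mapsto\lambda(\chi))$. Therefore $\cF_W$ is the Schneider--Teitelbaum isomorphism. Alternatively, both are the unique maps compatible with Amice's isomorphism (Theorem \ref{thm:Amice}) via the commutative square of Theorem \ref{thm:pAdicFourierMainTheoremW}.

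\emph{Main obstacle.} The hard step is the reduced-structure identification: one must confirm that the rigid structure in \cite{ST} is the reduced closed subspace, rather than possibly a non-reduced one. I would handle this by noting that \cite{ST} show their variety is smooth, being a twisted form of the open unit disc, hence reduced. The topology comparison is the secondary point to verify.
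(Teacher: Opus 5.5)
Your proposal is correct and takes the same route as the paper. The paper's proof simply combines Theorem~\ref{thm:pAdicFourierMainTheoremW} with Example~\ref{exmpl:DistrL}, and your extra checks are details the paper leaves implicit: that the topologies on $C^{\an}_W(\grT,\Cloc)$ and $C^{\Llocan}(\sO_\Lloc,\Cloc)$ agree, and that both character varieties are the reduced (indeed smooth) closed subvariety of $\CharVar{\grT}$ with the same points. One small slip: $\Hom_{\Qp}(\Lloc,\Cloc)\cong\prod_{\sigma}\Cloc$ requires $\Cloc$ to contain a normal closure of $\Lloc$, but the criterion you actually use, that $df$ is $\Lloc$-linear at every point, needs no such assumption.
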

\begin{proof}
This follows from Theorem \ref{thm:pAdicFourierMainTheoremW} and Example \ref{exmpl:DistrL}.
\end{proof}

\begin{definition}For $\mu\in D_W(\grT,\Cloc)$ and $h\in C^{\an}_W(\grT,\Cloc)$ we write as usual
\begin{equation*}
    \int_Thd\mu:=\mu(h).
\end{equation*}
For $f\in \cO(\CharVar{\grT}_W/\Cloc)$, we denote by $\mu_f\in D_W(\grT,\Cloc)$ the corresponding distribution with $\cF_W(\mu_f)=f$, so that by definition
\begin{equation*}
    \int_T\chi d\mu_f=f(\chi)
\end{equation*}
for $\chi\in  \CharVar{\grT}_W(\Cloc)$.
\end{definition}
By Theorem \ref{thm:AdicCharVariety}, the Lie algebra of the rigid analytic variety $\CharVar{\grT}_W/\Cloc$ is given by $W_\Cloc:=W\otimes_\Lloc \Cloc$, and $W_\Cloc$ acts by derivations on $\cO(\CharVar{\grT}_W/\Cloc)$.
On the other hand, the elements in $W_\Cloc$ are linear forms $T\to C$, i.e. elements of $C^{\an}_W(\grT,\Cloc)$. This gives rise to two interpretations of $\Xi\in \Sym_\Cloc^{\bullet}W_C$:

\begin{notation}\label{not:diff-op}
We see $\Xi\in \Sym_\Cloc^{\bullet}W_C$
\begin{enumerate}
\item as an invariant differential operator $\Xi:\cO(\CharVar{\grT}_W/\Cloc) \to \cO(\CharVar{\grT}_W/\Cloc), f\mapsto \Xi.f$,
\item as a polynomial function $P_\Xi\in C^{\an}_W(\grT,\Cloc)$, so that $f\mapsto P_\Xi f$ is the multiplication by  $P_\Xi$.
\end{enumerate}
For a distribution $\lambda\in D_W(\grT,\Cloc)$ we define $P_\Xi\lambda$ by
\begin{equation*}
    P_\Xi\lambda(f):= \lambda(P_\Xi f).
\end{equation*}
\end{notation}

These two interpretations are related by the following integration formula:
\begin{proposition}\label{prop:dist-integration-formula}
	For $f\in \cO(\CharVar{\grT}_W/\Cloc)$ and $\Xi\in \Sym_\Cloc^\bullet W_\Cloc$, we have:
	\[
		P_\Xi\mu_f=\mu_{\Xi.f},
	\]
	or stated differently: For every $\chi \in \CharVar{\grT}_W(\Cloc)$, we have:
	\begin{equation*}
	\int_{\grT} \chi\cdot P_\Xi\,d\mu_f=(\Xi.f)(\chi).
	\end{equation*}
\end{proposition}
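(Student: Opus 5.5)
By the injectivity of $\cF_W$ (Theorem \ref{thm:pAdicFourierMainTheoremW}), it suffices to show that both distributions have the same Fourier transform. That is, we must check
\[
\int_\grT \chi\cdot P_\Xi\,d\mu_f=(\Xi.f)(\chi)\quad\text{for all }\chi\in\CharVar{\grT}_W(\Cloc).
\]
Since the operators on both sides are multiplicative in $\Xi$, we reduce to $\Xi=w\in W_\Cloc$ of degree one. On the left, $P_{\Xi_1\Xi_2}=P_{\Xi_1}P_{\Xi_2}$, so $P_{\Xi_1\Xi_2}\mu=P_{\Xi_1}(P_{\Xi_2}\mu)$. On the right, composition of invariant differential operators corresponds to the product in $\Sym^\bullet$, because $W_\Cloc$ is abelian. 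Linearity in $\Xi$ is clear, so an induction on degree reduces everything to the degree-one case. If the relation holds for $\Xi_2$, then $P_{\Xi_2}\mu_f=\mu_{\Xi_2.f}$, and applying the degree-one case for $\Xi_1$ to $\Xi_2.f$ gives the claim for $\Xi_1\Xi_2$.

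For $w\in W_\Cloc$, we identify the invariant derivation explicitly. By Theorem \ref{thm:AdicCharVariety}, $\CharVar{\grT}_W$ is a subgroup of $\Hom(\grT,\Zp)\otimes\Gmf$ with Lie algebra $W_\Cloc$, and $w$ exponentiates to the one-parameter family of characters $\chi_{w\otimes t}=\exp(t\,w(\cdot))$ for $t$ small. This family lies in $\CharVar{\grT}_W$ since $(d\chi_{w\otimes t})|_0=tw\in W_\Cloc$. The invariant derivation is therefore
\[
(w.f)(\chi)=\frac{d}{dt}\Big|_{t=0} f(\chi\cdot\chi_{w\otimes t}).
\]
On the other hand,
\[
f(\chi\cdot\chi_{w\otimes t})=\mu_f\bigl(\chi\exp(t w)\bigr)=\sum_{n\ge0}\frac{t^n}{n!}\mu_f(\chi w^n),
\]
by continuity of $\mu_f$, exactly as in the proof of the proposition preceding Corollary \ref{cor:Ideal-IW}. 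Differentiating at $t=0$ gives $\mu_f(\chi\cdot P_w)=(P_w\mu_f)(\chi)$, which is the claim. Here $\chi w^n\in C^{\an}_W$, and the series converges in $C^{\an}_W(\grT,\Cloc)$ for $|t|$ small.

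The main obstacle is normalization: we must check that the action of $W_\Cloc$ on $\cO(\CharVar{\grT}_W/\Cloc)$ in Notation \ref{not:diff-op} is exactly the translation derivative along $t\mapsto\chi_{w\otimes t}$. This amounts to checking that the identification of the Lie algebra with $W_\Cloc$ via the logarithm in \eqref{eq:FiberProd} matches the parametrization $\beta\otimes z\mapsto z^{\beta}$ with $\log z=t$. This follows from the formula $(d\chi_{\beta\otimes z})|_0=\beta\log z$ established in the proof of Theorem \ref{thm:AdicCharVariety}. The remaining technical point is the convergence of the exponential series in $C^{\an}_W$, which we obtain on a small enough disc in $t$ where $\exp(tw)$ is globally analytic on $\grT$.
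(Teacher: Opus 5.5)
Your proof is correct, but it takes a different route from the paper. The paper never computes on $\CharVar{\grT}_W$ directly. It takes the identity $\cF(P_\Xi\lambda)=\Xi.\cF(\lambda)$ for the full Amice transform $D(\grT,\Cloc)\cong\cO(\CharVar{\grT}/\Cloc)$ as known (Schneider--Teitelbaum, Lemma 4.6(8)). It then descends this identity to the quotients $D_W(\grT,\Cloc)=D(\grT,\Cloc)/I_W$ and $\cO(\CharVar{\grT}_W/\Cloc)=\cO(\CharVar{\grT}/\Cloc)/\cF(I_W)$ supplied by Theorem \ref{thm:pAdicFourierMainTheoremW}. For this it only observes that multiplication by $P_\Xi$ preserves $I_W$, so that $\Xi.$ preserves $\cF(I_W)$.

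You instead verify the formula intrinsically on $\CharVar{\grT}_W$:
\begin{enumerate}
\item reduce to $\Xi=w\in W_\Cloc$ by multiplicativity;
\item realize $w.f$ as the derivative along the one-parameter subgroup $t\mapsto\exp(tw)$;
\item expand $\mu_f(\chi\exp(tw))$ as a power series in $t$ by continuity, which is the same device as in the injectivity proposition;
\item conclude with that injectivity statement.
\end{enumerate}

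The paper's route is shorter and reuses a known result. However, it implicitly needs two things: that the invariant derivations in $W_\Cloc$ on $\CharVar{\grT}$ are tangent to $\CharVar{\grT}_W$ and restrict to its invariant derivations, and that the normalization agrees with the classical case. Your route is self-contained and reproves the classical case $W=\Hom_{\Zp}(\grT,\Lloc)$ along the way. From Theorem \ref{thm:pAdicFourierMainTheoremW} it needs only the surjectivity of $\cF_W$, so that $\mu_f$ and $\mu_{\Xi.f}$ exist. It also makes explicit the normalization of the Lie algebra action through the logarithm in \eqref{eq:FiberProd}, which the paper leaves unstated.

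One small slip: the conclusion of your degree-one computation should read $(w.f)(\chi)=\mu_f(\chi\cdot P_w)=(P_w\mu_f)(\chi)$. As written, the displayed identity $\mu_f(\chi\cdot P_w)=(P_w\mu_f)(\chi)$ is a tautology and omits the comparison with $(w.f)(\chi)$ that the argument actually establishes.
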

\begin{proof}
In the case $W=\Hom_\Zp(\grT,\Lloc)$, the Fourier transform $F_W$ is just the classical Amice transform $F$ and the statement is well-known, see e.g. \cite[Lemma 4.6 (8)]{ST}. For general $W\subseteq \Hom_{\Zp}(\grT,\Cloc)$, the statement follows from Theorem \ref{thm:pAdicFourierMainTheoremW} and the commutativity of
\[
	\begin{gathered}[b]
		\xymatrix{
			I_W 		\ar[r]^{P_\Xi\cdot }\ar[d]^{\cF} & I_W \ar[d]^{\cF} \\
		 	\cF(I_W) 	\ar[r]^{\Xi.} & \cF(I_W) . 
		}\\[-\dp\strutbox]
	\end{gathered}\qedhere
\]
\end{proof}

The locally constant function are in $C_W^{an}(T,C)$ and we will need a formula for their integral. Suppose now that $C$ contains all $p$-power roots of unity and fix an embedding of $\QQ(\mu_{p^\infty})\hookrightarrow C$. Any $p^n$-torsion point $\chi \in \widehat{T}_W(C)[p^n]$ is a character of the form
\[
	\chi\colon T\to T/p^nT\to \mu_{p^n}(C)\subseteq C^\times.
\]
\begin{definition}\label{def:Fourier-transform}
	Let $R$ be a $\QQ(\mu_{p^\infty})$-algebra. For any locally constant function $\rho \colon T\to R$, the Fourier transform $\widehat{\rho}$ of $\rho$ is a finitely supported function $\widehat{\rho}\colon \bigcup_n \widehat{T}_W(C)[p^n]\to R$ defined as follows: Let $n$ be an integer such that $\rho$ factors as $\rho \colon T\to T/p^nT\to R$, then $\widehat{\rho}\in C(\widehat{T}_W(C)[p^n],R)$ is given by
	\[
		\widehat{\rho}(\chi):=\frac{1}{\#(T/p^nT)}\sum_{\widetilde{s}\in T/p^nT} \chi(\widetilde{s})^{-1}\rho(\widetilde{s}).
	\]
\end{definition}
Note that the Fourier transform $\widehat{\chi}$, for $\chi\in \widehat{T}_W(C)[p^n]$, is the characteristic function of $\{\chi\}\subseteq \bigcup_n \widehat{T}_W(C)[p^n]$.
\begin{corollary}\label{cor:dist-integration-formula}
Let $f\in \cO(\widehat{T}_W/C)$.
For any locally constant function $\rho \colon T\to R$ and any $\Xi\in \Sym^\bullet_C W_C$, we have
\[
	\int_T \rho\cdot P_\Xi d\mu_f=\sum_{\chi\in \bigcup_n \widehat{T}_W(C)[p^n]} \widehat{\rho}(\chi)\cdot  (\Xi.f)(\chi).
\]
\end{corollary}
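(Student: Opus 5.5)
The plan is to reduce everything to Proposition \ref{prop:dist-integration-formula} by expanding $\rho$ in characters of finite order. Choose $n$ such that $\rho$ factors through $T/p^nT$. Each character $\chi\colon T\to T/p^nT\to\mu_{p^n}(C)$ is locally constant, so its differential vanishes. Hence $\chi$ lies in $\widehat{T}_W(C)$, and indeed in $\widehat{T}_W(C)[p^n]$, for any $W$; this is also clear from Proposition \ref{lem:char-diagram}, since $(d\chi)|_0=0\in W_C$. Conversely, every $p^n$-torsion point of $\widehat{T}_W(C)$ is such a character. By the character theory of the finite abelian group $T/p^nT$, with $\#\widehat{T}_W(C)[p^n]=\#(T/p^nT)$ because $C$ contains $\mu_{p^n}$, the Fourier inversion formula gives
\[
\rho=\sum_{\chi\in \widehat{T}_W(C)[p^n]}\widehat{\rho}(\chi)\,\chi
\]
as functions $T\to R$. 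To check this, substitute the definition of $\widehat{\rho}$ and use orthogonality: $\sum_\chi \chi(s-\widetilde s)$ equals $\#(T/p^nT)$ if $s\equiv\widetilde s$ and $0$ otherwise.

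Multiply this identity by $P_\Xi$ and integrate against $\mu_f$. Linearity is harmless: for fixed $n$ the sum is finite, and, if $R$ is not $C$ itself, we interpret the integral $R$-linearly, i.e.\ via $\mu_f\otimes R$ on functions $\rho\otimes$ scalars. This gives
\[
\int_T\rho P_\Xi\,d\mu_f=\sum_{\chi}\widehat{\rho}(\chi)\int_T \chi P_\Xi\,d\mu_f.
\]
Proposition \ref{prop:dist-integration-formula} then identifies each term $\int_T\chi P_\Xi\,d\mu_f$ with $(\Xi.f)(\chi)$.

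It remains to see that the answer does not depend on $n$, i.e.\ that replacing $\widehat{T}_W(C)[p^n]$ by the union $\bigcup_n\widehat{T}_W(C)[p^n]$ is legitimate. If $\rho$ factors through $T/p^mT$ with $m\ge n$, the averaged sum over $T/p^mT$ defining $\widehat{\rho}(\chi)$ vanishes for any $\chi$ not trivial on $p^nT$. For $\chi$ that do factor through $T/p^nT$, the value agrees with the one computed at level $n$. So $\widehat{\rho}$ is a well-defined finitely supported function on the union, and the sum over the union equals the sum over level $n$. The main care point is this compatibility and the $R$-coefficient bookkeeping; everything else is formal once Proposition \ref{prop:dist-integration-formula} is available.
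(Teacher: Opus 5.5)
Your proof is correct and is essentially the paper's argument: you write $\rho$ as a finite combination of finite-order characters $\chi\in\widehat{T}_W(C)[p^n]$ and apply Proposition~\ref{prop:dist-integration-formula} to each term. The paper instead reduces by linearity to $\rho=\chi$, using that $\widehat{\chi}$ is the indicator function of $\{\chi\}$, which is equivalent to your explicit Fourier inversion; your check that $\widehat{\rho}$ does not depend on the level $n$ is a detail the paper leaves implicit.
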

\begin{proof}
Since any locally constant function can be written as a finite sum of characters of the form $\chi$ with $\chi\in \widehat{T}_W(C)[p^n]$, it suffices to prove the formula for $\rho=\chi$. In this case, we have according to Proposition \ref{prop:dist-integration-formula}:
\[
	\int_T \rho\cdot P_\Xi d\mu_f=\int_T \chi\cdot P_\Xi d\mu_f=(\Xi.f)(\chi)=(\Xi.f)(\chi),
\]
This proves the formula in this case, since $\widehat{\chi}$ is the characteristic function of $\{\chi\}$.
\end{proof}

\section{$W$-analytic character varieties for abelian $p$-adic Lie groups}\label{sec:p-adic-Lie}
The above $p$-adic Fourier theory for $T$ generalize easily to  arbitrary topologically finitely generated abelian $p$-adic Lie groups. We will need this generalization later in our applications to the interpolation of $L$-values.

Let $A$ be a topologically finitely generated abelian $p$-adic Lie group and
$\fra$ be its Lie algebra.  We fix an exponential map $\exp_A\colon \fra_0\to A$ defined on a compact open $\Zp$-module $\fra_0\subseteq \fra$.

\begin{proposition}[{\cite[Prop. 6.4.5]{Emerton2017}}]
There is a strictly quasi-Stein rigid analytic variety $\widehat{A}$ over $\Qp$ parametrizing locally $\Qp$-analytic characters. 
\end{proposition}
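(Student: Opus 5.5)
The plan is to reduce to the case of a finite free $\Zp$-module, where Theorem \ref{thm:AdicCharVariety} (with $W=\Hom_{\Zp}(\grT,\Qp)$) already gives $\CharVar{\grT}$ as a finite product of open unit discs $\Hom_{\Zp}(\grT,\Zp)\otimes(\Gmf)^{\rig}_\eta$.

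\textbf{Step 1: structure of $A$.} Since $A$ is a topologically finitely generated abelian $p$-adic Lie group, it has an open subgroup $A_0\cong \grT$ that is finite free over $\Zp$; one may take $A_0=\exp_A(p^k\fra_0)$ for $k$ large. Then $A/A_0$ is a finitely generated discrete abelian group, so $A\cong \grT\times \ZZ^s\times F$ with $F$ finite. If one prefers to avoid choosing a splitting, it suffices to use the exact sequence $0\to A_0\to A\to A/A_0\to 0$. Local analyticity of a character only concerns its restriction to $A_0$.

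\textbf{Step 2: the variety.} Restriction gives an exact sequence of groups
\[
1\to \Hom(A/A_0,\Cloc^\times)\to \CharVar{A}(\Cloc)\to \CharVar{A_0}(\Cloc).
\]
The left term is represented by $\mathbb{G}_m^{s}\times \mu_F$, which is a rigid analytic variety (the analytification of a diagonalizable group) and in particular quasi-Stein. Using a splitting $A\cong A_0\times \ZZ^s\times F$, define
\[
\CharVar{A}:=\CharVar{A_0}\times (\mathbb{G}_m^{\an})^s\times \widehat F,
\]
with $\widehat F$ finite étale. Its $\Cloc$-points are exactly the locally analytic characters, because a character of $A$ is determined by its restrictions to the factors, and analyticity is tested on $A_0$. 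Independence of the splitting follows as in Step 4 below.

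\textbf{Step 3: quasi-Stein property.} The space $\CharVar{A_0}$ is an increasing union of affinoid closed polydiscs of radius $r_n\to 1$ with dense restriction maps. The space $\mathbb{G}_m^{\an}$ is the union of the annuli $p^{n}\le |z|\le p^{-n}$, and restriction between these annuli is also dense, since Laurent polynomials are dense. A finite product of such admissible exhaustions is again an exhaustion by affinoids with dense transition maps. Taking the diagonal cofinal system, this gives a strictly quasi-Stein space over $\Qp$.

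\textbf{Step 4: main obstacle.} The expected difficulty is showing that the construction is canonical, i.e.\ independent of the choice of $A_0$ and of the splitting. For this, one should show that $\CharVar{A}$ represents the functor on affinoid $\Qp$-algebras $R$ given by $R\mapsto\{$characters $A\to R^\times$ locally analytic on $A$$\}$, where a character is locally analytic if its restriction to some open $A_0$ lies in the unit-disc points $\CharVar{A_0}(R)$. Representability then follows from two facts:
\begin{itemize}
\item[(i)] Shrinking $A_0$ to $p^kA_0$ gives a finite étale surjection $\CharVar{A_0}\to\CharVar{p^kA_0}$, namely raising to the $p^k$-th power on discs.
\item[(ii)] Extending a character from $A_0$ to $A$ is governed by the extension data above, which is a torsor under $\Hom(A/A_0,\mathbb{G}_m)$.
\end{itemize}
Any two choices are related by these operations, so the resulting spaces are canonically isomorphic.
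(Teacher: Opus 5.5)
The paper gives no argument for this statement; it imports it from Emerton, together with the companion Fourier isomorphism (Proposition \ref{prop:Emerton-Fourier}). Your proposal is a genuinely self-contained route. You fix a decomposition $A\cong T\times\ZZ^s\times F$ and set $\CharVar{A}:=\CharVar{T}\times(\mathbb{G}_m^{\an})^s\times\widehat{F}$, where $\CharVar{T}$ is the open unit polydisc of Theorem \ref{thm:AdicCharVariety}. This is correct in substance.

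What your route buys is an explicit model compatible with the paper's own description of $\CharVar{T}$. In the application $A=G(p^\infty\frf)$ is compact, so $s=0$ and over $\Cp$ the space $\CharVar{A}$ is a finite disjoint union of open polydiscs. What the citation buys is a choice-free, functorial object and the Fourier theory in one package. The price of your route is that independence of the choices has to be argued separately.

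Three small repairs are needed.

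\textbf{The splitting.} In Step 2 you use $A\cong A_0\times\ZZ^s\times F$ with $A_0=\exp_A(p^k\fra_0)$. For an arbitrary such $A_0$ no such splitting exists. For example, with $A=\Zp$ and $A_0=p\Zp$, the quotient $\ZZ/p$ cannot split off because $\Zp$ is torsion-free. You must take $A_0$ to be the $\Zp^r$-factor of a decomposition, and such a decomposition does exist:
\begin{itemize}
\item the preimage $A'$ of the torsion subgroup of $A/A_0$ is compact open, and $A\cong A'\times\ZZ^s$;
\item $A'$ is a topologically finitely generated profinite abelian group containing an open subgroup $\cong\Zp^r$, so $A'\cong\Zp^r\times F$ with $F$ finite.
\end{itemize}

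\textbf{The word ``strictly''.} In Emerton's terminology, \emph{strictly} quasi-Stein also requires each affinoid of the exhaustion to be relatively compact in the next. This is what makes the global functions a nuclear Fr\'echet space. You only check density of the restriction maps. Relative compactness does hold for your exhaustions: the closed polydiscs $|z_i-1|\le r_n$ with $r_n<r_{n+1}$, the annuli $p^{n}\le|z|\le p^{-n}$, the finite factor, and their products. It still has to be stated.

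\textbf{Step 4.} This step is more complicated than necessary, and claim (ii) is only true locally. The fibres of restriction $\CharVar{A}\to\CharVar{A_0}$ are torsors when non-empty, but the map need not be surjective on $R$-points. For $\Zp\supset p\Zp$ it is $z\mapsto z^p$, which requires extracting $p$-th roots in $R$. A simpler argument works. For a fixed decomposition, your space represents the functor $R\mapsto\{\text{continuous characters } A\to R^\times\}$ on affinoid $\Qp$-algebras $R$. Continuity on $T$ is equivalent to each $\chi(e_i)-1$ being topologically nilpotent, and such characters are automatically locally analytic. This functor involves no choices, so independence of the decomposition follows from Yoneda.
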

\begin{definition}
We call $\widehat{A}$ the ($\Qp$-analytic) character variety of $A$. 
\end{definition}

For a complete non-Archimedean extension $C$ of $\Qp$, we write $C^{\an}(A,C)$ for the locally $\Qp$-analytic functions on $A$ with values in $C$ and $D(A,C):=C^{\an}(A,C)'$ for the locally $\Qp$-analytic distributions. 

\begin{proposition}[{\cite[Prop. 6.4.6]{Emerton2017}}]\label{prop:Emerton-Fourier}
There is an injective continuous map of topological $C$-algebras
\[
	D(A,C)\hookrightarrow \cO(\widehat{A}/C),
\]
which is an isomorphism if $A$ is compact.
\end{proposition}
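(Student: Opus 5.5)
The statement is Emerton's Prop. 6.4.6. I would prove it by reducing to Amice's theorem (Theorem \ref{thm:Amice}) through the structure theory of topologically finitely generated abelian $p$-adic Lie groups.

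The structure theorem says that $A$ contains an open subgroup $A_0$ with $A_0\cong \Zp^d$. For instance, one can take $A_0=\exp_A(p^k\fra_0)$ for $k$ large, which is a torsion-free open subgroup of finite rank. The quotient $A/A_0$ is then a finitely generated discrete abelian group, i.e. $\ZZ^r\times(\text{finite})$. When $A$ is compact, $A/A_0$ is finite. The character variety $\widehat{A}$ is constructed accordingly: restriction to $A_0$ gives a morphism $\widehat{A}\to\widehat{A_0}$. Its fibres are torsors under the character group of $A/A_0$, which is a finite group times $\mathbb{G}_m^{r}$. So $\widehat{A}$ is a finite-type ``extension'' of $\widehat{A_0}=\Hom(A_0,\Zp)\otimes(\Gmf)^{\rig}_\eta$.

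For the map itself, I would define $\lambda\mapsto \cF_\lambda$ with $\cF_\lambda(\chi)=\lambda(\chi)$. To see that $\cF_\lambda$ is rigid analytic, decompose
\[
	C^{\an}(A,C)=\bigoplus_{a\in A/A_0} C^{\an}(aA_0,C)
\]
in the compact case. In general the decomposition is a locally convex direct sum, so $D(A,C)=\prod_a D(aA_0,C)$ only when the index set is finite; in the noncompact case $C^{\an}$ is a direct sum and the dual is a product. Then write $\lambda=\sum_a \delta_a\ast\lambda_a$ with $\lambda_a\in D(A_0,C)$. This gives $\cF_\lambda(\chi)=\sum_a\chi(a)\cF_{\lambda_a}(\chi|_{A_0})$, and each term is analytic by Amice's theorem applied to $A_0$. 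Multiplicativity, $\cF_{\lambda\ast\mu}=\cF_\lambda\cF_\mu$, follows as in \cite[Prop. 1.4]{ST}. Injectivity is the analogue of the vanishing proposition proved above. If $\cF_\lambda$ vanishes, then character theory of the finite quotients of $A/A_0$ (or of the discrete group $A/A_0$) separates the cosets, which forces each $\cF_{\lambda_a}=0$ on $\widehat{A_0}$, and Amice gives $\lambda_a=0$. Continuity comes from the continuity of Amice's isomorphism together with the finite sums and products involved.

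For surjectivity when $A$ is compact, I would use that $D(A,C)=D(A_0,C)\otimes_{C[A_0]}C[A]$ is a free $D(A_0,C)$-module of rank $[A:A_0]$. Dually, $\widehat{A}\to\widehat{A_0}$ is finite étale, and its pushforward of $\cO$ is free of the same rank, with basis the functions $\chi\mapsto\chi(a_i)$ for coset representatives $a_i$. Concretely, I would check that $\cO(\widehat A)=\bigoplus_i \cO(\widehat{A_0})\cdot\chi(a_i)$; I expect this to be the main obstacle. To prove it, I would split $A\cong A_0'\times F$, with $F$ finite, after possibly changing the choice of $A_0$. This splitting is possible since a compact abelian $p$-adic Lie group is $\Zp^d\times$finite. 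With this splitting, $\widehat{A}=\widehat{A_0'}\times\widehat{F}$ with $\widehat F$ a finite discrete set. Then $\cO(\widehat A)=\prod_{\psi\in\widehat F}\cO(\widehat{A_0'})$, which matches $D(A,C)=D(A_0',C)\otimes C[F]$ via the discrete Fourier transform on $F$. This makes the isomorphism immediate from Theorem \ref{thm:Amice}. For the non-compact case, only injectivity is claimed, and it was handled above.
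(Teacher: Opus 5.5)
The paper gives no proof of its own here; the statement is quoted from Emerton. So the question is whether your reconstruction stands on its own.

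For compact $A$ it does, and this is the only case used later (Theorem~\ref{thm:FT-abelian}, applied to $G(p^\infty\frf)$). A compact abelian $p$-adic Lie group is $\Zp^d\times F$ with $F$ finite: its prime-to-$p$ Sylow parts inject into $A/A_0$, and its pro-$p$ part is a finitely generated $\Zp$-module. Hence $\widehat A=\widehat{\Zp^d}\times\widehat F$ and $D(A,C)=D(\Zp^d,C)\otimes_C C[F]$, and Theorem~\ref{thm:Amice} gives the topological isomorphism.

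One small correction in this case. $\widehat F=\Spec C[F]$ is a finite set of $C$-points only if $C$ contains the roots of unity of order dividing the exponent of $F$, and the paper does not assume this. Instead write $\cO(\widehat A/C)=\cO(\widehat{\Zp^d}/C)\otimes_C C[F]$. The Fourier transform is then $\cF\otimes\mathrm{id}_{C[F]}$, with no discrete Fourier transform needed.

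In the non-compact part, however, you have product and direct sum the wrong way round, and as written the analyticity step fails. The cosets $aA_0$ are open and pairwise disjoint, so a locally analytic function on $A$ is an arbitrary family of locally analytic functions on the cosets. Thus $C^{\an}(A,C)=\prod_{a\in A/A_0}C^{\an}(aA_0,C)$, and therefore $D(A,C)=\bigoplus_{a}D(aA_0,C)$.

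If $D(A,C)$ were the product, $\cF_\lambda(\chi)=\sum_a\chi(a)\cF_{\lambda_a}(\chi|_{A_0})$ would be an infinite sum with no reason to converge. For $a$ of infinite order in $A/A_0$, the function $\chi\mapsto\chi(a)$ is unbounded along the $\mathbb{G}_m$-directions of the fibres of $\widehat A\to\widehat{A_0}$.

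With the correct direct sum, every $\lambda$ is a \emph{finite} sum $\sum_a\delta_a\ast\lambda_a$. Your three arguments then go through:
analyticity;
continuity, checked summand by summand as a map out of a locally convex direct sum;
injectivity, via linear independence of the functions $\psi\mapsto\psi(a)$ on the characters of $A/A_0$.
This finiteness is also exactly why surjectivity fails, e.g.\ $D(\ZZ,C)=C[T^{\pm1}]\subsetneq\cO(\mathbb{G}_m^{\an})$.

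Your splitting trick in fact handles the non-compact case uniformly. Let $A_1$ be the preimage of the torsion subgroup of $A/A_0$. Then $A/A_1$ is free, so $A\cong A_1\times\ZZ^s\cong\Zp^d\times F\times\ZZ^s$. The map becomes Amice's isomorphism tensored with $C[F]$ and with the injective, non-surjective inclusion $C[T_1^{\pm1},\dots,T_s^{\pm1}]\hookrightarrow\cO((\mathbb{G}_m^s)^{\an})$.
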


To define the analyticity condition, let $W\subseteq \Hom_{\Qp}(\fra, F)$ be an $F$-subvector space defined over some complete non-Archimedean extension of $\Qp$. This defines an analyticity condition on the free  $\Zp$-module $\fra_0$, see Definition \ref{def:differential-condition}, and hence we have the $W$-analytic character variety $\widehat{(\fra_0)}_W$, see Definition \ref{def:W-analytic-character-variety}. 

The exponential map defines by duality a homomorphism of rigid analytic groups $\widehat{A}\to \widehat{\fra}_0$. We use this to define the $W$-analytic character variety of $A$:

\begin{lemma}
Define the $W$-analytic character variety $\widehat{A}_W$ as the pullback
\[
	\xymatrix{
		\widehat{A}_W \ar[r]\ar[d] & \widehat{A}\ar[d]\\
		\widehat{(\fra_0)}_W \ar[r] & \widehat{\fra}_0.
	}
\]
Then this definition does not depend on the choice of the compact open subgroup $\fra_0\subseteq \fra$.
\end{lemma}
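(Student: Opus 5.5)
To prove independence of $\fra_0$, I would reduce to comparing $\fra_0$ with a smaller lattice. Given two compact open $\Zp$-submodules $\fra_0,\fra_0'\subseteq\fra$ on which $\exp_A$ is defined, their intersection $\fra_0'':=\fra_0\cap\fra_0'$ is again compact open and free of finite rank. So it suffices to treat an inclusion $\fra_0'\subseteq\fra_0$, with $\exp_A|_{\fra_0'}$ the restriction of $\exp_A|_{\fra_0}$. Write $\varphi\colon\fra_0'\hookrightarrow\fra_0$. It induces an isomorphism $\varphi^*\colon\Hom_{\Zp}(\fra_0,F)\xrightarrow{\sim}\Hom_{\Zp}(\fra_0',F)$, since both sides equal $\Hom_{\Qp}(\fra,F)$, and under this identification $\varphi^*(W)=W$. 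By the functoriality noted after Definition \ref{def:W-analytic-character-variety}, we get a commutative square
\[
\xymatrix{
\widehat{(\fra_0)}_W\ar[r]\ar[d] & \widehat{\fra}_0\ar[d]^{\mathrm{res}}\\
\widehat{(\fra_0')}_W\ar[r] & \widehat{\fra_0'}.
}
\]
The map $\widehat{A}\to\widehat{\fra_0'}$ factors as $\widehat{A}\to\widehat{\fra}_0\xrightarrow{\mathrm{res}}\widehat{\fra_0'}$.

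The key claim is that this square is cartesian. Granting it, pasting the two pullback squares gives
\[
\widehat{A}\times_{\widehat{\fra_0'}}\widehat{(\fra_0')}_W\cong\widehat{A}\times_{\widehat{\fra}_0}\bigl(\widehat{\fra}_0\times_{\widehat{\fra_0'}}\widehat{(\fra_0')}_W\bigr)\cong\widehat{A}\times_{\widehat{\fra}_0}\widehat{(\fra_0)}_W,
\]
which is the desired independence.

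To prove the claim, I would use Theorem \ref{thm:AdicCharVariety}, which describes each $\widehat{(\fra_0)}_W$ as the fiber product of $\widehat{\fra}_0$ with $W\otimes_F\bbA^1$ over $\Hom_{\Zp}(\fra_0,F)\otimes\bbA^1$ via $\log$ (equivalently $\chi\mapsto (d\chi)|_0$). The restriction map commutes with these $\log$ maps and the isomorphism $\varphi^*$: restricting a character to $\fra_0'$ does not change its derivative at $0$, because $\fra_0'$ is open in $\fra_0$. Hence
\[
\widehat{\fra}_0\times_{\widehat{\fra_0'}}\widehat{(\fra_0')}_W=\widehat{\fra}_0\times_{\widehat{\fra_0'}}\widehat{\fra_0'}\times_{\Hom(\fra_0',F)\otimes\bbA^1}(W\otimes\bbA^1)=\widehat{\fra}_0\times_{\Hom(\fra_0,F)\otimes\bbA^1}(W\otimes\bbA^1)=\widehat{(\fra_0)}_W.
\]
Concretely, on $C$-points this is Proposition \ref{lem:char-diagram}: the condition $(d\chi)|_0\in W_C$ is the same whether read on $\fra_0$ or on $\fra_0'$.

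The main obstacle is making this an identity of rigid spaces rather than just of point sets, including the non-reduced structure a fiber product could a priori carry. Since everything is defined by fiber products, the displayed chain is a formal manipulation of fiber products once we check that the diagram
\[
\widehat{\fra}_0\to\widehat{\fra_0'}\xrightarrow{\log}\Hom_{\Zp}(\fra_0',\Zp)\otimes\bbA^1
\]
equals $\log$ on $\widehat{\fra}_0$ composed with the isomorphism $(\varphi^*)^{-1}$ on the affine spaces. I would verify this via the explicit description $\Hom_{\Zp}(\fra_0,\Zp)\otimes(\Gmf)^{\rig}_\eta$. There, restriction is $\varphi^*\otimes\id$, which is an isogeny, and $\log$ is $\id\otimes\log$. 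So compatibility holds as a morphism of rigid spaces, and the pullback computation goes through scheme-theoretically.
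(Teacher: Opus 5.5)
Your proposal is correct and follows the same route as the paper: reduce to nested lattices, observe that the square comparing $\widehat{(\fra_0)}_W\to\widehat{\fra}_0$ for the larger lattice with its analogue for the smaller one is cartesian, and paste pullbacks. The paper only asserts that this square is cartesian ``from the definition''; your check via the fiber-product description of Theorem \ref{thm:AdicCharVariety}, together with the compatibility of $\log$ with restriction, supplies exactly the verification it leaves implicit.
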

\begin{proof}
Suppose one has another compact open subgroup $\fra_0'$ on which the exponential map is defined. Without loss of generality, one may assume $\fra_0\subseteq \fra'_0$. It follows from the definition of $\widehat{\fra}_0$ that one has  a cartesian diagram
\[
	\xymatrix{
		\widehat{(\fra'_0)}_W \ar[r]\ar[d] & \widehat{\fra}_0'\ar[d]\\
		\widehat{(\fra_0)}_W \ar[r] & \widehat{\fra}_0,
	}
\]
and the independence of $\fra_0\subseteq \fra$ follows.
\end{proof}

For any complete non-Archimedean field $C$ containing $F$, we define the locally $W$-analytic functions as above to be
\begin{equation}\label{eq:analytic-functions-on-A}
    	C^{\an}_W(A,C):=\{f\in C^{\an}(A,C): df \in C^{\an}(A,C)\otimes_F W\}
\end{equation}
and the $W$-analytic distributions to be its dual
\[
	D_W(A,C):=C_W(A,C)'.
\]
Then $\widehat{A}_W(C)$ consists exactly of the locally analytic characters $\chi \colon A\to C^\times$ such that $\chi \in C^{\an}_W(A,C)$. One can now easily generalize the main theorem of $p$-adic Fourier theory from Theorem \ref{thm:pAdicFourierMainTheoremW} to more general abelian $p$-adic Lie groups:

\begin{theorem}[$p$-adic Fourier theory for abelian $p$-adic Lie groups]\label{thm:FT-abelian}
	Let $A$ be a compact topologically finitely generated abelian $p$-adic Lie group and $W\subseteq \Hom_{\Qp}(\fra,F)$. The Fourier transform induces an isomorphism of topological $C$-algebras
	\[
		\cF_W\colon D_W(A,\Cloc) \xrightarrow{\sim} \cO(\CharVar{A}_W/\Cloc), \quad \lambda\mapsto \cF_\lambda|_{\CharVar{A}_W},
	\]
	that fits into a commutative diagram
	\[
		\xymatrix{
			\cF\colon D(A,\Cloc) \ar[r]^{\cong}\ar[d] & \cO(\CharVar{A}/\Cloc)\ar[d] \\
			\cF_W\colon D_W(A,\Cloc) \ar[r]^{\cong} & \cO(\CharVar{A}_W/\Cloc),
		}
	\]
	where the upper horizontal isomorphism comes from Proposition \ref{prop:Emerton-Fourier}.
\end{theorem}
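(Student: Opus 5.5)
We follow the proof of Theorem \ref{thm:pAdicFourierMainTheoremW} and reduce to the free case via the exponential. Since $A$ is compact and topologically finitely generated, $\exp_A(\fra_0)=:A_0$ is an open subgroup of finite index, isomorphic to the free $\Zp$-module $\fra_0$ (shrinking $\fra_0$ if necessary). Then $A=\coprod_{a\in A/A_0} aA_0$, and correspondingly
\[
C^{\an}(A,C)=\bigoplus_{a\in A/A_0} C^{\an}(aA_0,C).
\]
Because the condition $df\in C^{\an}(A,C)\otimes_F W$ is local and translation invariant, this decomposition restricts to $C^{\an}_W(A,C)=\bigoplus_a C^{\an}_W(aA_0,C)$. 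Each summand is identified with $C^{\an}_W(\fra_0,C)$ by translating by $a$ and pulling back along $\exp_A$, and the differential condition matches because $\exp_A$ induces the identity on Lie algebras. Dually, $D_W(A,C)\cong \bigoplus_a \delta_a\ast D_W(A_0,C)$ and $D(A,C)\cong \bigoplus_a \delta_a \ast D(A_0,C)$.

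The first key step is to establish the analogue of Corollary \ref{cor:Ideal-IW} for $A$:
\[
I_W:=\ker\bigl(D(A,C)\to D_W(A,C)\bigr)=\{\lambda: \cF_\lambda|_{\widehat{A}_W(C)}=0\}.
\]
The inclusion ``$\subseteq$'' is clear, since $W$-analytic characters lie in $C^{\an}_W$. For ``$\supseteq$'', we repeat the argument of the proposition preceding Corollary \ref{cor:Ideal-IW}. The functions $h|U$, with $h$ a translate of an element of $\Sym^\bullet W_C$ (pulled back via $\log_A=\exp_A^{-1}$ on cosets) and $U$ open, are dense in $C^{\an}_W(A,C)$. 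Characters of the finite groups $A/A'$, for $A'\subseteq A_0$ open, separate the indicator functions $1_U$. The characters $\chi$ of $A$ that restrict on $A_0$ to $\chi_{\beta\otimes z}\circ\log_A$, with $\beta\in W$ and $z$ small, exist because $A_0$ has finite index in $A$ and $C$ may be enlarged (these characters extend, up to a finite-order twist). Expanding in $z$ then gives $\lambda(\beta^n|U)=0$, and Schinzel's lemma finishes the argument as before.

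The second step identifies $\widehat{A}_W\subseteq\widehat{A}$ as a closed reduced analytic subvariety whose points are exactly $\widehat{A}_W(C)$. This holds because it is the pullback of the closed reduced subvariety $\widehat{(\fra_0)}_W\subseteq \widehat{\fra}_0$ along the restriction map $\widehat{A}\to\widehat{\fra}_0$. That map is finite étale (a torsor under the finite dual of $A/A_0$), so reducedness is preserved. By \cite[9.5.2, 9.5.3]{BGR}, the ideal sheaf $\cI_W$ of $\widehat{A}_W$ consists of the local sections vanishing on $\widehat{A}_W(C)$. Under the isomorphism of Proposition \ref{prop:Emerton-Fourier} for compact $A$, we get $\cI_W(\widehat{A})=\cF(I_W)$. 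Since $\widehat{A}$ is (strictly quasi-)Stein, global sections are exact on coherent sheaves, so
\[
D_W(A,C)=D(A,C)/I_W\cong \cO(\widehat{A}/C)/\cI_W(\widehat{A})\cong\cO(\widehat{A}_W/C).
\]
The diagram commutes by construction.

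I expect the main obstacles to be the density and extension step in the first key step and the topological statement. For the extension, the cleanest route is to avoid extending characters altogether: work instead with $\lambda$ restricted to each coset $aA_0$, using that $\cF_\lambda$ vanishing on $\widehat{A}_W$ implies, by averaging over the characters of $A/A_0$ (which lie in $\widehat{A}_W$), that each component $\delta_{a^{-1}}\ast(\lambda|_{aA_0})$ has Fourier transform vanishing on $\widehat{(A_0)}_W$. Theorem \ref{thm:pAdicFourierMainTheoremW} then applies directly. For the topology, surjectivity of $D(A,C)\to D_W(A,C)$ and the quotient topology follow from Hahn–Banach for the closed subspace $C^{\an}_W\subseteq C^{\an}$ of a space of compact type, together with the open mapping theorem for Fréchet spaces.
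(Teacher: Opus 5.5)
This is correct and is essentially the paper's proof. The paper only says that the argument for Theorem \ref{thm:pAdicFourierMainTheoremW} carries over using that $\widehat{A}$ is strictly quasi-Stein, i.e.\ identify $I_W$ with the distributions whose transform vanishes on $\widehat{A}_W$, apply BGR to the closed reduced subvariety, and use exactness of global sections. You have supplied the coset/exponential details it leaves out. One small correction: averaging over the characters of $A/A_0$ does not really avoid extending characters. It only shows that each coset component kills the restrictions to $A_0$ of $W$-analytic characters of $A$, so you still need every point of $\widehat{(A_0)}_W$ to lift to $\widehat{A}_W$ after a finite extension of $C$; this holds because $A/A_0$ is finite and $\overline{C}^\times$ is divisible. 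For the same reason, the vanishing condition characterizing $I_W$ should be read on all rigid points of $\widehat{A}_W$ rather than only on $C$-valued points.
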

\begin{proof}
The proof is the same as in Theorem \ref{thm:pAdicFourierMainTheoremW} using that $\CharVar{A}$ is strictly  quasi-Stein.

\end{proof}

Let us finally observe that a homomorphism of topologically finitely generated abelian $p$-adic Lie groups $\varphi\colon B \to A$ together with the analyticity condition $W\subseteq \Hom(\frb,F)$ on $B$, induces in a natural way an analyticity condition $W_\varphi\subseteq \Hom(\fra,F)$ through the pull-back diagram
\[
	\xymatrix{	W_\varphi \ar[r]\ar[d] & \Hom(\fra,F)\ar[d]^{\varphi^*}\\
	W\ar[r] & \Hom(\frb,F).
	}
\]
Usually, the map $\varphi\colon B\to A$ will be clear from the context. To avoid double subscripts, we will write 
\begin{align}\label{eq:induced-W}
	C_W^{\an}(A,C):= C_{W_\varphi}^{\an}(A,C), \quad D_{W}(A,C):=D_{W_\varphi}(A,C), \quad \text{and}\quad \widehat{A}_{W}:=\widehat{A}_{W_\varphi}.
\end{align}
By construction, one has 
\[
	C_W^{\an}(A,C)=\{f\in C^{\an}(A,C): f\circ \varphi \in C_W(B,C)\}.
\]

\begin{lemma}
Let $\varphi\colon B\to A$ be a homomorphism of $p$-adic Lie groups as above and let $W\subseteq \Hom(\frb,F)$ be an analyticity condition on $B$. Then one has  a cartesian diagram
\[
	\xymatrix{	\widehat{A}_W \ar[r]\ar[d] & \widehat{A}\ar[d]\\
	\widehat{B}_W \ar[r] & \widehat{B}.
	}
\]
\end{lemma}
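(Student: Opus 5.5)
The plan is to reduce the statement to the analogous cartesian square for the chosen lattices in the Lie algebras, and then check that square on $C$-valued points using Proposition \ref{lem:char-diagram}.

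First I fix compatible lattices. The differential $d\varphi\colon \frb\to\fra$ is compatible with the exponential maps, $\varphi\circ\exp_B=\exp_A\circ d\varphi$, on small enough lattices. So I choose compact open $\Zp$-lattices $\frb_0\subseteq\frb$ and $\fra_0\subseteq\fra$, with $\exp_B$ defined on $\frb_0$, $\exp_A$ defined on $\fra_0$, and $d\varphi(\frb_0)\subseteq\fra_0$. By the preceding lemma, $\widehat{A}_W$ and $\widehat{B}_W$ do not depend on these choices. Dualizing the commutative square of groups (exponentials and $\varphi$, $d\varphi$) gives a commutative cube. Its front face is the defining pullback for $\widehat{A}_{W_\varphi}$, and its back face is the defining pullback for $\widehat{B}_W$.

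By pasting of cartesian squares, the claim reduces to showing that the square
\[
\xymatrix{ \widehat{(\fra_0)}_{W_\varphi}\ar[r]\ar[d] & \widehat{\fra}_0\ar[d]\\ \widehat{(\frb_0)}_W\ar[r] & \widehat{\frb}_0 }
\]
is cartesian. Here the vertical maps are induced by $d\varphi\colon\frb_0\to\fra_0$; they exist by the functoriality noted after Definition \ref{def:W-analytic-character-variety}, since $\varphi^*(W_\varphi)\subseteq W$.

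I would prove this square is cartesian by unwinding Theorem \ref{thm:AdicCharVariety}. The variety $\widehat{(\fra_0)}_{W_\varphi}$ is $\widehat{\fra}_0\times_{\Hom(\fra_0,F)\otimes\bbA^1}(W_\varphi\otimes\bbA^1)$, and similarly for $\frb_0$. The question becomes whether
\[
(W_\varphi\otimes\bbA^1)\;=\;(\Hom(\fra,F)\otimes\bbA^1)\times_{\Hom(\frb,F)\otimes\bbA^1}(W\otimes\bbA^1)
\]
holds as rigid spaces. This is the definition of $W_\varphi$ as a pullback of $F$-vector spaces, base changed to affine spaces. Base change to $\bbA^1$ is exact on finite-dimensional linear data, so it preserves kernels and fibre products. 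The remaining compatibility needed is that $\log$ commutes with $\varphi^*$, which is clear from the formula $(d\chi)|_0$. Pasting this square with the $\log$-squares then gives the claim.

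As a pointwise sanity check, a $C$-point of the fibre product is a locally analytic character $\chi$ of $A$ such that $\chi\circ\varphi$ is $W$-analytic. By Proposition \ref{lem:char-diagram} this means $\varphi^*((d\chi)|_0)\in W_C$, i.e.\ $(d\chi)|_0\in (W_\varphi)_C$, which says exactly that $\chi\in\widehat{A}_{W_\varphi}(C)$.

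The main obstacle is the reduced/non-reduced subtlety: the fibre product of rigid spaces might a priori not be reduced, while the check above is only pointwise. I would avoid this by working with the explicit linear fibre products throughout, which are smooth, rather than arguing pointwise.
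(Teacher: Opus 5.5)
Your proof is correct and follows essentially the paper's approach. The paper only says ``this follows immediately from the definitions''; you spell out that unwinding by pasting the defining pullback squares for $\widehat{A}_W$ and $\widehat{B}_W$ with the lattice square, and by using that $W_\varphi$ is by definition the preimage of $W$ under $\varphi^*$, a fibre product that is preserved on passing to vector groups over $\bbA^1$. Since every object is defined as a fibre product, the identification is purely formal, so the reducedness issue you flag never actually arises.
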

\begin{proof}
This follows immediately from the definitions.
\end{proof}

\section{Abelian Galois groups and their character varieties}\label{sec:Hecke-character-variety}
In part II the Fourier transformation will be applied to an abelian Galois group, which gives rise to a  rigid analytic variety parametrizing all $p$-adic Hecke characters of conductor dividing $p^\infty \frf$. In this section, we discuss the relevant definitions and introduce the concept of $\Sigma$-analytic distributions for a subset $\Sigma$ of embeddings of a number field $L$.

Let $L$ be a number field and fix an embedding $\overline{\QQ}\hookrightarrow \Cp$. We write $\sO_L$ for the ring of integers in $L$ and $\bbA^\times_L$ for the group of ideles over $L$. 
For  an  ideal $\frf\subset \sO_L$ co-prime to $p$ we let
\begin{equation*}
    G(p^\infty\frf):=\Gal(L(p^\infty\frf)/L)
\end{equation*} 
be the Galois group of the ray class extension for $p^{\infty}\frf$. This is a compact and topologically finitely generated abelian $p$-adic Lie group. 
\begin{definition}
A \emph{$p$-adic Hecke character} is a continuous group homomorphism 
\[
	\chi \colon \bbA^\times_L/L^\times \to \CC_p^\times.
\]
Any such $p$-adic Hecke character factors through an abelian Galois group $G(p^\infty\frf)$ for some ideal $\frf\subset \sO_L$ co-prime to $p$
\[
	\chi \colon \bbA^\times_L/L^\times \twoheadrightarrow G(p^\infty\frf) \to \CC_p^\times.
\]
In this case we say that $\chi$ has conductor dividing $p^\infty \frf$ and we abuse notation by writing also $\chi: G(p^{\infty}\frf)\to \CC_p^\times$. 
\end{definition}
In particular, $p$-adic Hecke characters are exactly the $1$-dimensional $p$-adic Galois representations of the number field $L$.

\begin{definition}
The character variety $\widehat{G(p^\infty \frf)}$ associated to $G(p^\infty\frf)$ is called the \emph{$p$-adic Hecke character variety of conductor $p^\infty \frf$}.
\end{definition}

Any $p$-adic Hecke character is automatically locally $\Qp$-analytic. In particular, the $\Cp$-points of $\widehat{G(p^\infty \frf)}$ are exactly the $p$-adic Hecke characters of conductor dividing $p^\infty \frf$. 
\begin{remark}
The above point of view on  $p$-adic Hecke characters leads to consider $p$-adic $L$-functions as rigid analytic (or meromorphic) functions on the $p$-adic Hecke character variety. By $p$-adic Fourier theory, these functions are exactly the locally $\Qp$-analytic distributions on $G(p^\infty \frf)$. There is a unique injective map 
\[
\Zp\llbracket G(p^\infty \frf) \rrbracket \hookrightarrow D(G(p^\infty \frf),\Cp) \cong \cO(\widehat{G(p^\infty \frf)}/\Cp)
\]
of the Iwasawa algebra of $G(p^\infty\frf)$ to the rigid analytic functions on the $p$-adic Hecke character variety. Hence, this point of view aligns well with the measure-theoretic approach to $p$-adic $L$-functions. Needless to say that this point of view is motivated by Tate's Fourier theoretic approach to the functional equation of Hecke $L$-functions.
\end{remark}
From now on, let $C\subseteq \mathbb{C}_p$ complete fields containing the Galois closure of $L$ under the embedding $\overline{\QQ}\subseteq \Cp$. 

Given a set of embeddings $\Sigma\subseteq \Hom(L,\overline{\QQ})$, 
one has 
\[
	\Hom(\sO_L\otimes \Zp,\Cloc)\cong\prod_{\sigma \in \Hom(L,\overline{\QQ})} \Cloc
\]
so that for $T:=\sO_L\otimes \Zp$ we can define an analyticity condition, 
\begin{equation}\label{eq:W-Sigma}
	W(\Sigma):=\prod_{\sigma \in \Sigma} C\subseteq \prod_{\sigma \in \Hom(L,\overline{\QQ})} C=\Hom(\sO_L\otimes \Zp,C)=\Hom(\sO_L\otimes \Qp,C),
\end{equation}
which we call $\Sigma$-analyticity. 
\begin{definition}\label{def:SigmaAnalytic}
We say that $f\in C^{\an}(\sO_L\otimes \Zp,C)$ is \emph{locally $\Sigma$-analytic} if for every $x\in \sO_L\otimes \Zp$ there exists an open neighbourhood $U$ of $x$ such that $f|_U$ can be written as a convergent series
\[
	f|_U(z)=\sum_{\underline{n}\in \NN_0^\Sigma} a_{\underline{n}} z^{\underline{n}}, \quad a_{\underline{n}}\in C
\]
where $z^{\underline{n}}:=\prod_{\sigma\in \Sigma} \sigma(z)^{n_\sigma}$. 
\end{definition}
Similar functions have been studied before in \cite{DeIeso2013}. It is straightforward to check:
\begin{lemma}One has 
\begin{equation*}
    C^{\an}_{W(\Sigma)}(\sO_L\otimes \Zp,\Cloc)=\{f\in C^{\an}(\sO_L\otimes \Zp,C)\mid f \mbox{ locally $\Sigma$-analytic}\}.
\end{equation*}
\end{lemma}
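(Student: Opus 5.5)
The plan is to reduce everything to a local statement in coordinates adapted to the embeddings. Since $C$ contains the Galois closure of $L$, the map
\[
	\sO_L\otimes\Qp\otimes_{\Qp} C\xrightarrow{\sim}\prod_{\sigma\in\Hom(L,\overline{\QQ})} C,\qquad z\otimes c\mapsto (c\,\sigma(z))_\sigma
\]
is an isomorphism. Hence the linear forms $\sigma\colon \sO_L\otimes\Zp\to C$, for $\sigma\in\Hom(L,\overline{\QQ})$, form a $C$-basis of $\Hom(\sO_L\otimes\Zp,C)$. In these coordinates $W(\Sigma)$ is spanned by $\{\sigma\}_{\sigma\in\Sigma}$.

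First I would show that locally $\Qp$-analytic functions are exactly those that are locally convergent power series in all the variables $\sigma(z)$, $\sigma\in\Hom(L,\overline{\QQ})$. Choose a $\Zp$-basis $e_1,\dots,e_d$ of $\sO_L\otimes\Zp$ with coordinates $x_i$. The $\sigma(z)=\sum_i x_i\sigma(e_i)$ are an invertible $C$-linear change of coordinates from the $x_i$. A series converging on a small enough ball in the $x_i$ can therefore be re-expanded as a series in the $\sigma(z)-\sigma(x)$, converging on a possibly smaller ball, and conversely. Shrinking the neighbourhood $U$ is allowed in both definitions, so this causes no trouble.

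For the inclusion $\supseteq$, write $f|_U=\sum_{\underline n\in\NN_0^\Sigma}a_{\underline n}z^{\underline n}$. Then $df=\sum_{\sigma\in\Sigma}\partial_\sigma f\, d\sigma$, where each $\partial_\sigma f$ is again a convergent series on $U$. Since $d\sigma=\sigma\in W(\Sigma)$, we get $df\in C^{\an}\otimes W(\Sigma)$ locally, hence globally.

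For $\subseteq$, suppose $df\in C^{\an}\otimes W(\Sigma)$. Expand $f$ locally around $x$ as $\sum_{\underline m\in\NN_0^{\Hom(L,\overline\QQ)}}b_{\underline m}(z-x)^{\underline m}$. The condition says that the coefficient of $d\tau$ in $df$ vanishes for $\tau\notin\Sigma$, i.e.\ $\partial_\tau f=0$ on $U$. Termwise differentiation then gives $b_{\underline m}=0$ whenever $m_\tau>0$ for some $\tau\notin\Sigma$. So $f|_U$ is a series in the $\sigma(z)-\sigma(x)$ with $\sigma\in\Sigma$ only. Expanding the binomials, and shrinking $U$ if needed for convergence, turns it into a series in the $\sigma(z)$ as in Definition~\ref{def:SigmaAnalytic}.

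The only delicate point is the convergence bookkeeping under the linear change of variables and the recentering. Both are standard for power series on sufficiently small polydiscs, with radii adjusted by the operator norm of the change-of-basis matrix.
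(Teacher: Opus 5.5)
Your reduction to the coordinates $\sigma(z)$ is fine, and so is the argument that $\partial_\tau f=0$ for $\tau\notin\Sigma$ kills every $b_{\underline m}$ with $m_\tau>0$. Together these give an expansion of $f|_U$ in the $\sigma(z)-\sigma(x)$ with $\sigma\in\Sigma$ only. This is the natural verification; the paper gives nothing beyond ``straightforward to check''.

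The step that fails is your last sentence. Turning $\sum_{\underline m} b_{\underline m}\prod_{\sigma\in\Sigma}(\sigma(z)-\sigma(x))^{m_\sigma}$ into a series in the $\sigma(z)$ means recentring at the origin. The coefficient of $\prod_\sigma\sigma(z)^{n_\sigma}$ becomes the infinite sum $\sum_{\underline m\geq\underline n} b_{\underline m}\prod_\sigma\binom{m_\sigma}{n_\sigma}(-\sigma(x))^{m_\sigma-n_\sigma}$. In general this needs the expansion at $x$ to converge all the way out to $u=0$, which means radius $1$ when $x$ is a unit. Shrinking $U$ does nothing here, because it cannot enlarge a radius of convergence. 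The ``standard'' recentring you invoke only moves the centre inside the disc of convergence.

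In fact, if Definition~\ref{def:SigmaAnalytic} is read literally (series centred at $0$), the inclusion $\subseteq$ is false. Take $L$ imaginary quadratic, $p$ inert, $\Sigma=\{\sigma\}$, and $U=1+p^k(\sO_L\otimes\Zp)$. Let $f(z)=(\sigma(z)-1-p^{k-1})^{-1}$ on $U$ and $f=0$ off $U$.
\begin{itemize}
\item On $U$ one has $f=-p^{1-k}\sum_{n\geq 0}\bigl((\sigma(z)-1)/p^{k-1}\bigr)^n$, so $f\in C^{\an}_{W(\Sigma)}$.
\item Suppose $f(z)=\sum_n a_n\sigma(z)^n$ on some neighbourhood $U'\subseteq U$ of $1$. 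Since $|\sigma(z)|=1$ there, convergence forces $|a_n|\to 0$, so $g=\sum_n a_n u^n$ lies in the Tate algebra $C\langle u\rangle$.
\item Then $g(u)(u-1-p^{k-1})-1$ vanishes at the infinitely many points $\sigma(z)$, $z\in U'$, of the closed unit disc. By Strassmann's theorem it is identically zero, which is absurd at $u=1+p^{k-1}$.
\end{itemize}

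So the lemma is only correct when the series in Definition~\ref{def:SigmaAnalytic} are understood as centred at the point, i.e.\ as expansions in $\prod_{\sigma\in\Sigma}\sigma(z-x)^{n_\sigma}$. This is surely the intended meaning, as for locally $F$-analytic functions in Schneider--Teitelbaum. With that reading your proof is complete once you delete the final recentring: you have already produced exactly such an expansion. Your argument for $\supseteq$ also goes through verbatim with $z^{\underline n}$ replaced by $(z-x)^{\underline n}$.
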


We want to extend the analyticity condition $W(\Sigma)$ to $G(p^\infty\frf)$. For this we use the exact sequence
\begin{equation*}
    \xymatrix{0\ar[r] & \Gal(L(p^\infty\frf)/L(\frf)) \ar[r] &  \Gal(L(p^\infty\frf)/L) \ar[r]^{\pi} &  \Gal(L(\frf)/L)\ar[r] &  0
}
\end{equation*}
and the isomorphism
\[
 (\sO_L\otimes \Zp)^\times/E(\sO_\frf^\times) \xrightarrow{\sim}\Gal(L(p^\infty\frf)/L(\frf))\subseteq G(p^\infty\frf),
\]
coming from  class field theory,
where $E(\sO_\frf^\times)$ is the closure of   the one-units $\sO_\frf^\times$ modulo $\frf$  in the local units $(\sO_L\otimes \Zp)^\times$. In this way, we obtain a homomorphism 
\begin{equation*}
    \varphi\colon (\sO_L\otimes \Zp)^\times \to G(p^\infty\frf)
\end{equation*}
of $p$-adic Lie groups and we can apply the construction from \ref{eq:induced-W}. This gives an induced analyticity condition
\begin{equation*}
    W(\Sigma)_\varphi\subseteq \Hom(\Lie(G(p^\infty\frf)),C)
\end{equation*}
on $G(p^\infty \frf)$ and we write $\widehat{G(p^\infty \frf)}_{W(\Sigma)}$ for the associated $W(\Sigma)$-analytic character variety. As before, we drop the index $\varphi$ to avoid double subscripts.

\begin{definition}\label{def:Sigma-analytic-Galois}
	The rigid analytic variety $\widehat{G(p^\infty \frf)}_{W(\Sigma)}$ is called the \emph{$\Sigma$-analytic $p$-adic Hecke character variety}. The space from \eqref{eq:analytic-functions-on-A}
	\[
		C_{W(\Sigma)}(G(p^\infty \frf),C)\stackrel{\text{def.}}{=}\{ f\in C^{\an}(G(p^\infty \frf),C): df\in  C^{\an}(G(p^\infty \frf),C)\otimes W(\Sigma)_\varphi\}
	\]
is called \emph{the space of  $\Sigma$-analytic functions on the Galois group $G(p^\infty \frf)$}. Its strong dual space
\[
	D_{W(\Sigma)}(G(p^\infty \frf),C)\stackrel{\text{def.}}{=}C_{W(\Sigma)}(G(p^\infty \frf),C)'
\]
is called \emph{the space of  $\Sigma$-analytic distributions on $G(p^\infty \frf)$}.
\end{definition}

Spelling out the definitions, we can give the following more explicit description of the $\Sigma$-analytic functions on $G(p^\infty \frf)$. We consider the following diagram \[
\xymatrix{
	&(\sO_L\otimes \Zp)^\times \ar@{->>}[d]^{\mathrm{pr}} \ar[dr]^{\varphi}& & & \\
	0\ar[r] & \Gal(L(p^\infty\frf)/L(\frf)) \ar[r] &  \Gal(L(p^\infty\frf)/L) \ar[r]^{\pi} &  \Gal(L(\frf)/L)\ar[r] &  0.
}
\]
A locally $\Qp$-analytic function $f\in C^{\an}(G(p^\infty \frf),C)$ is contained in $C_{W(\Sigma)}(G(p^\infty \frf),C)$ if and only if for each $g\in \Gal(L(\frf)/L)$ the composition 
		\[
			\pr\circ (\rho|_{\pi^{-1}(\{g\})}) \colon (\sO_L\otimes \Zp)^\times \twoheadrightarrow \pi^{-1}(\{g\})\subseteq \Gal(L(p^\infty \frf)/L)\xrightarrow{\rho} \Cp
		\]
		is $\Sigma$-analytic in the sense of Definition \ref{def:SigmaAnalytic}. 

The following description of $\Sigma$-analytic distributions will be used later:
\begin{proposition}\label{prop:decomp-Gal-distributions}
One has  canonical isomorphisms of topological $C$-algebras
\begin{align*}
	D_{W(\Sigma)}(G(p^\infty \frf),C)&\cong \bigoplus_{g\in \Gal(L(\frf)/L)}D_{W(\Sigma)}(\Gal(L(p^\infty\frf)/L(\frf)),C)\\
	&\cong  \bigoplus_{g\in \Gal(L(\frf)/L)} D_{W(\Sigma)}((\sO_L\otimes \Zp)^\times/E(\sO_\frf^\times), C).
\end{align*}
\end{proposition}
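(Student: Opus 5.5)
The plan is to decompose the function space first and then dualize. Write $H:=\Gal(L(p^\infty\frf)/L(\frf))$, an open subgroup of finite index in $G:=G(p^\infty\frf)$. The cosets $\pi^{-1}(\{g\})$, $g\in\Gal(L(\frf)/L)$, are finitely many disjoint open and closed subsets covering $G$. So restriction gives a topological isomorphism
\[
C^{\an}(G,C)\cong\bigoplus_{g} C^{\an}(\pi^{-1}(\{g\}),C).
\]
The defining condition $df\in C^{\an}(G,C)\otimes W(\Sigma)_\varphi$ is local on $G$, so this decomposition restricts to the $W$-analytic subspaces.

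Next, for each $g$ I choose a lift $\tilde g\in G$ and use translation $h\mapsto \tilde g h$ to identify $H$ with $\pi^{-1}(\{g\})$. Translation commutes with the invariant differential $d$, since the Lie algebra action is translation invariant because $G$ is abelian. Hence translation preserves $W(\Sigma)_\varphi$-analyticity. The induced condition on $H$ is exactly the one pulled back along $\mathrm{pr}$, because $\varphi$ factors through $H$ and $\Lie H=\Lie G$. This gives
\[
C_{W(\Sigma)}(G,C)\cong\bigoplus_g C_{W(\Sigma)}(H,C).
\]
Taking strong duals of this finite direct sum of locally convex spaces gives the first isomorphism of topological vector spaces.

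For the second isomorphism, the class field theory isomorphism $(\sO_L\otimes\Zp)^\times/E(\sO_\frf^\times)\xrightarrow{\sim}H$ is an isomorphism of $p$-adic Lie groups. Under it, $W(\Sigma)_\varphi$ corresponds to the condition induced from $\Sigma$-analyticity on $(\sO_L\otimes\Zp)^\times$ via $\mathrm{pr}$, by the construction in \eqref{eq:induced-W}. It therefore identifies the $W$-analytic function spaces, and hence their duals.

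The step I expect to need the most care is the claim of \emph{algebra} isomorphisms. Convolution on $D(G,C)$ does not obviously correspond to a componentwise product on the direct sum. I would handle this through the Fourier side, using Theorem \ref{thm:FT-abelian}, and the fact that the dual group $\Gal(L(\frf)/L)^\vee$ is finite. The variety $\widehat{G}_{W(\Sigma)}$ decomposes as a finite disjoint union of components, each a translate of a copy of $\widehat{H}_{W(\Sigma)}$, so its ring of functions is a finite product of algebras. Matching components with $g$ requires enlarging $C$ by roots of unity, or reading the direct sum as the canonical one indexed via Fourier inversion on the finite group. If only the $C$-vector-space decomposition is needed later, the algebra compatibility on each summand reduces to the $H$-factor, which is a subalgebra of $D(G,C)$ under convolution.
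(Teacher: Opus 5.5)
Your construction of the isomorphisms as topological $C$-vector spaces is correct. It follows the paper's route: decompose $G(p^\infty\frf)$ along the cosets of $H=\Gal(L(p^\infty\frf)/L(\frf))$, then identify $H$ with $(\sO_L\otimes\Zp)^\times/E(\sO_\frf^\times)$ by class field theory. It is in fact more careful than the paper's one-line proof. The paper asserts a product decomposition $G(p^\infty\frf)\cong H\times\Gal(L(\frf)/L)$. You only use that $G(p^\infty\frf)$ is, as a $p$-adic manifold, the disjoint union of the translates $\tilde g H$, and that translations preserve $W(\Sigma)_\varphi$-analyticity. That is the right thing to use, because the extension $0\to H\to G(p^\infty\frf)\to\Gal(L(\frf)/L)\to 0$ need not split as groups when $\Gal(L(\frf)/L)$ has $p$-torsion. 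The vector-space isomorphism is also all that the proof of Theorem~\ref{thmA} uses. There $\mu_{\frf,\frc}$ is assembled from the $\mu_{\frf,\frc}(\frb_i)$ by translating with the Artin symbols of the $\frb_i$, and it is only ever evaluated on characters.

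The step that fails is your argument for the \emph{algebra} structure. Restriction $\widehat{G(p^\infty\frf)}_{W(\Sigma)}\to\widehat{H}_{W(\Sigma)}$ is a torsor under the finite character group of $\Gal(L(\frf)/L)$. Writing the source as a disjoint union of translates of a copy of $\widehat{H}_{W(\Sigma)}$ amounts to giving a section of this torsor. Such a section exists if the extension splits, but not in general.

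The toy model $H=p\Zp\subset G=\Zp$ already shows this. Here $\widehat{\Zp}$ is the open disc $\{|z-1|<1\}$ and restriction is $z\mapsto z^p$. Since the disc is connected, $D(\Zp,C)\cong\cO(\widehat{\Zp}/C)$ has no nontrivial idempotents. By contrast, $\bigoplus_{\ZZ/p}D(p\Zp,C)$ with componentwise product does, even though your coset argument identifies the two as topological vector spaces. So the algebra assertion, with the coset indexing and componentwise multiplication, cannot be proved in this generality, either by your Fourier-side argument or by the paper's appeal to a product decomposition.

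What the coset decomposition actually gives is $D_{W(\Sigma)}(G(p^\infty\frf),C)=\bigoplus_g \delta_{\tilde g}\ast D_{W(\Sigma)}(H,C)$. This is a possibly twisted group algebra of $\Gal(L(\frf)/L)$ over the subalgebra $D_{W(\Sigma)}(H,C)$. Only in the split case is it $D_{W(\Sigma)}(H,C)\otimes_C C[\Gal(L(\frf)/L)]$. That in turn becomes a product of copies of $D_{W(\Sigma)}(H,C)$ indexed by \emph{characters} of $\Gal(L(\frf)/L)$, not by elements $g$, once $C$ contains their values. Your closing remark is right that the identity summand is a subalgebra. The other summands, however, are only $D_{W(\Sigma)}(H,C)$-modules.
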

\begin{proof}
Since $\frf$ is co-prime to $p$, one has an isomorphism
\[
	G(p^\infty \frf) \cong \Gal(L(p^\infty \frf))\times \Gal(L(\frf)/L),
\]
which gives the first isomorphism in the statement. The second isomorphism follows from  $\Gal(L(p^\infty\frf)/L(\frf)) \cong (\sO_L\otimes \Zp)^\times/E(\sO_\frf^\times)$.
\end{proof}

\begin{remark}
One can associate to any algebraic Hecke character
\[
	\chi \colon \bbA^\times_L/L^\times \to \ol{\QQ}^\times \subseteq \CC^\times
\]
of infinity type $\mu=(\mu_\sigma)_{\sigma}\in \ZZ^{\Hom(L,\ol{\QQ})}$ and conductor dividing $p^\infty \frf$ a $p$-adic Hecke character of conductor dividing $p^\infty \frf$. This will be discussed  in the spacial case of totally imaginary fields in section \ref{sec:p-adic-Hecke}. It is not difficult to see that the $p$-adic Hecke character associated to $\chi$ is locally $\Sigma$-analytic if and only if $\mathrm{supp}(\mu):=\{\sigma \in \Hom(L,\ol{\QQ}: \mu_\sigma \neq 0)\}$ is contained in $\Sigma$. For $\Sigma$ an induced CM type of a totally imaginary field, which contains a CM field, this will be explained in Lemma \ref{lem:Sigma-analytic-Hecke-char}.
\end{remark}

\section{$p$-divisible groups over $\sO_{\C_p}$} Here we review the classification of $p$-divisible groups over $\sO_{\C_p}$ by Scholze--Weinstein. Let us denote by $\pDiv_{\OCp}$ the category of $p$-divisible groups over $\OCp$. For a $p$-divisible group $G$ over $\OCp$, the  Hodge--Tate exact sequence for the dual $p$-divisible group $G^\vee$ gives an inclusion
\begin{equation}\label{eq:Hodge-Tate-inclusion}
	\Lie G\otimes_{\OCp}\Cp \subseteq T_p(G)\otimes_{\Zp}\Cp(-1).
\end{equation}
Scholze and Weinstein have classified $p$-divisible groups over $\OCp$ by such pairs $(\Lie G\otimes \Cp, T_p(G))$,  see \cite[Theorem 5.2.1]{ScholzeWeinstein}.

For our purposes, it is more convenient to express this classification  in terms of the Tate module $T_p(G^\vee)$  instead of $T_p(G)$. By Cartier duality, we have for any $G\in \mathrm{ob}(\pDiv_{\OCp})$ a canonical isomorphism
\[
	\Hom_{\Zp}(T_p(G^\vee),\Cp)\cong T_p(G)\otimes_{\Zp}\Cp(-1).
\] Therefore, we define:
\begin{definition}\label{def:dual-HT-pair}
We write $\HTDpairs$ for the following category:
\begin{itemize}
\item \textbf{Objects} given by pairs $(W,T)$ with $T$ a free $\Zp$-module of finite rank and $W\subseteq \Hom_{\Zp}(T,\Cp)$ a $\Cp$-subvector space.
\item \textbf{Morphisms}  $(W_1,T_1)\to (W_2,T_2)$  are $\Zp$-linear maps $\varphi \colon T_2\to T_1$ such that the induced map $\Hom_{\Zp}(T_1,\Cp)\to \Hom_{\Zp}(T_2,\Cp)$ maps $W_1$ into $W_2$.
\end{itemize}
\end{definition}
In terms of $T_p(G^\vee)$, \eqref{eq:Hodge-Tate-inclusion} reads
\[
	\Lie G\otimes_{\OCp}\Cp \subseteq \Hom_{\Zp}(T_p(G^\vee),\Cp),
\] 
and we obtain a functor
\[
	HT^\vee\colon \pDiv_{\OCp}\to \HTDpairs, \quad G\mapsto (\Lie G\otimes_{\OCp}\Cp ,T_p(G^\vee)).
\]

In terms of the functor $HT^\vee$, the classification result for $p$-divisible groups over $\OCp$ of
Scholze and Weinstein can be formulated as follows:

\begin{theorem}[Scholze--Weinstein]\label{thm:ScholzeWeinstein}
We have an equivalence of categories:
\begin{align*}
		HT^\vee \colon \pDiv_{\OCp}\xrightarrow{\sim} \HTDpairs, \quad G\mapsto (\Lie G\otimes \C_p, T_p(G^\vee)).
\end{align*}
\end{theorem}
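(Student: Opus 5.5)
The plan is to deduce the statement directly from \cite[Theorem 5.2.1]{ScholzeWeinstein} by composing with Cartier duality. Recall that Scholze--Weinstein prove that $G\mapsto (T_p(G),\Lie G\otimes_{\OCp}\Cp)$ is an equivalence between $\pDiv_{\OCp}$ and the category of pairs $(T,W')$. Here $T$ is a finite free $\Zp$-module and $W'\subseteq T\otimes_{\Zp}\Cp(-1)$ is a $\Cp$-subspace. Morphisms $(T,W')\to(T',W'')$ are $\Zp$-linear maps $T\to T'$ carrying $W'$ into $W''$. I will compose this equivalence with two auxiliary functors. The first is the Cartier duality anti-equivalence $G\mapsto G^\vee$ on $\pDiv_{\OCp}$. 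The second is the $\Zp$-linear duality $T\mapsto \Hom_{\Zp}(T,\Zp)$ on finite free $\Zp$-modules.

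The first key step is to identify the data on both sides. Cartier duality gives a canonical perfect pairing $T_p(G)\times T_p(G^\vee)\to \Zp(1)$, hence canonically
\[
T_p(G)\otimes_{\Zp}\Cp(-1)\cong \Hom_{\Zp}(T_p(G^\vee),\Cp),
\]
as already noted before Definition \ref{def:dual-HT-pair}. Under this identification, the Hodge--Tate inclusion \eqref{eq:Hodge-Tate-inclusion} becomes $\Lie G\otimes\Cp\subseteq \Hom_{\Zp}(T_p(G^\vee),\Cp)$. Thus on objects $HT^\vee(G)$ is exactly the Scholze--Weinstein pair of $G$, rewritten via $T_p(G)\cong \Hom_{\Zp}(T_p(G^\vee),\Zp(1))$. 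Conversely, any pair $(W,T)$ in $\HTDpairs$ arises from the Scholze--Weinstein pair $(T^\vee(1),W)$, where $T^\vee(1):=\Hom_{\Zp}(T,\Zp(1))$. This is because $T^\vee(1)\otimes\Cp(-1)=\Hom_{\Zp}(T,\Cp)$. The Scholze--Weinstein theorem therefore gives essential surjectivity.

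The second step is full faithfulness, which I will get by checking that the assignments on morphisms match. A homomorphism $f\colon G_1\to G_2$ induces $T_p(f)\colon T_p(G_1)\to T_p(G_2)$ together with the dual map $T_p(f^\vee)\colon T_p(G_2^\vee)\to T_p(G_1^\vee)$. These are adjoint with respect to the Cartier pairings. Consequently, the map $T_p(G_1)\otimes\Cp(-1)\to T_p(G_2)\otimes\Cp(-1)$ corresponds to the pullback $T_p(f^\vee)^*\colon \Hom(T_p(G_1^\vee),\Cp)\to\Hom(T_p(G_2^\vee),\Cp)$. The condition ``$W_1$ maps into $W_2$'' is therefore the same on both sides. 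This explains the contravariant-looking but covariant convention in Definition \ref{def:dual-HT-pair}, where a morphism $(W_1,T_1)\to(W_2,T_2)$ is a map $T_2\to T_1$. Since $T\mapsto \Hom_{\Zp}(T,\Zp(1))$ is an anti-equivalence on finite free $\Zp$-modules, $\Zp$-linear maps $T_p(G_1)\to T_p(G_2)$ correspond bijectively to $\Zp$-linear maps $T_p(G_2^\vee)\to T_p(G_1^\vee)$. Hence $\Hom(G_1,G_2)\to\Hom(HT^\vee G_1,HT^\vee G_2)$ is bijective because the Scholze--Weinstein functor is fully faithful.

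The only point I expect to need care is verifying that the Hodge--Tate inclusion used in \eqref{eq:Hodge-Tate-inclusion} agrees with the one in \cite{ScholzeWeinstein} under the Cartier identification. This includes checking the Tate twist and making sure no sign or dual issue turns $\Lie G$ into $\Lie G^\vee$. I would settle this by writing out the Hodge--Tate map $T_p(G^\vee)\to \omega_{G}\otimes\OCp(1)$, $x\mapsto x^*(dt/t)$, and dualizing it. Everything else is formal.
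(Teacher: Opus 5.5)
Your proposal is correct and is essentially the paper's argument: the paper likewise just transports \cite[Theorem 5.2.1]{ScholzeWeinstein} along the Cartier identification $T_p(G)(-1)\cong\Hom_{\Zp}(T_p(G^\vee),\Zp)$, and your checks of essential surjectivity and of full faithfulness (via adjointness of $T_p(f)$ and $T_p(f^\vee)$ under the Cartier pairing) only spell out what the paper leaves implicit. On the point you flag, $x\mapsto x^*(dt/t)$ is an untwisted map $T_p(G^\vee)\to\omega_G$, not a map into $\omega_G\otimes\OCp(1)$; dualizing it after tensoring with $\Cp$ gives exactly $\Lie G\otimes\Cp\subseteq\Hom_{\Zp}(T_p(G^\vee),\Cp)$.
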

\begin{proof}
This follows from \cite[Theorem 5.2.1]{ScholzeWeinstein}. Scholze and Weinstein formulate the classification of $p$-divisible groups in terms of pairs of the form $(W,T)$ with $T$ a free $\Zp$-module of finite rank and $W\subseteq T\otimes_{\Zp}\Cp(-1)$ a $\Cp$-subvector space. Their functor assigns $\Lie(G)\otimes \Cp, T_p(G)$ to a $p$-divisible group. However, by Cartier duality, we have $T_p(G)(-1)=\Hom(T_p(G^\vee),\Zp)$.
\end{proof}
For a full proof of the theorem we refer to \cite[Theorem 5.2.1]{ScholzeWeinstein}. We only briefly recall the following explicit description of a quasi-inverse
\[
	\HTDpairs\xrightarrow{\sim} \pDiv_{\OCp}, \quad (W,T)\mapsto G_{(W,T)}.
\]
Given $(W,T)\in \mathrm{ob}(\HTDpairs)$, the logarithm of the formal multiplicative group $\Gmf$ induces a map
\begin{equation}\label{eq:WtoLieG}
	\log\colon \Hom_{\Zp}(T,\Zp)\otimes_\Zp (\Gmf)^{\ad}_\eta \to \Hom_{\Zp}(T,\Cp)\otimes_{\Cp} \bbA^1,
\end{equation}
where $(\Gmf)^{\ad}_\eta$ is the (adic) generic fiber of the formal multiplicative group and $\bbA^1$ denotes the adic affine line over $\Cp$.
The following pullback diagram defines a $p$-divisible rigid-analytic group in the sense of \cite{Fargues19},
\begin{equation}\label{eq:ScholzeWeinsteinCartDiag}
	\xymatrix{
		(G_{(W,T)})_\eta^{\ad} \ar[r]\ar[d] &  \Hom_{\Zp}(T,\Zp)\otimes_\Zp (\Gmf)^{\ad}_\eta \ar[d]^-{\log}\\
		W\otimes_{\Cp} \mathbb{A}^1 \ar[r] & \Hom_{\Zp}(T,\Cp)\otimes_{\Cp} \bbA^1,
	}
\end{equation}
where the lower horizontal map is given by the inclusion $W\subseteq \Hom_{\Zp}(T,\Cp)$. In the proof of \cite[Theorem 5.2.1]{ScholzeWeinstein} it is finally shown that $(G_{(W,T)})_\eta^{\ad}$ is indeed the adic generic fiber of a $p$-divisible group $G_{(W,T)}$, which can be explicitly described as
\[
	G_{(W,T)}:=\coprod_{Y\subseteq (G_{(W,T)})_\eta^{\ad}} \Spf H^0(Y,\cO_Y^+).
\]

\section{Character varieties and $p$-divisible groups}\label{sec:p-div}
There is a canonical fully faithful functor from the category $\Rigid_\Cloc$ of $\Cloc$-rigid analytic varieties to the category of adic spaces over $C$
\[
	\Rigid_\Cloc\to \Adic_\Cloc, \quad X\mapsto X^{\ad}.
\]
For a $p$-divisible group $G$ over the valuation ring $\sO_\Cloc$ of $\Cloc$, let us write $G^{\rig}_\eta\in \Rigid_\Cloc$ for the rigid analytic generic fiber of $G$. The rigid analytic generic fiber functor is compatible with the adic generic fiber functor $(\cdot)_\eta^\ad$, i.e. one has an equivalence $(\cdot)^{\ad}\circ (\cdot)_\eta^{\rig}\simeq (\cdot)^{\ad}_\eta \colon \pDiv\to \Adic$.

Let $W\subset\Hom_{\Zp}(\grT,\Lloc)$ a $\Lloc$-subvector space. One obtains a pair $(W_{\Cp},T)\in \HTDpairs$ with $W_{\Cp}:=W\otimes_\Lloc \Cp$. 
Then $ \CharVar{\grT}_W$ is the $W$-analytic character variety of $T$, and $(G_{(W_{\Cp},T)})_\eta^{\rig}$ is the rigid generic fiber of the $p$-divisible group $G_{(W_{\Cp},T)}$ associated to the pair $(W_{\Cp},T)$, see \eqref{eq:ScholzeWeinsteinCartDiag}.

\begin{theorem}[Main theorem on $p$-adic Fourier theory of $p$-divisible groups]\label{thm:UniformizationOfCharVar}
There is an isomorphism of rigid analytic varieties over $\Cp$
\[
	(G_{(W_{\Cp},T)})_\eta^{\rig} \cong \CharVar{\grT}_W/\Cp,
\]
and the Fourier transformation induces an isomorphism of topological $\Cp$-algebras
	\[
		\cF_W\colon D_{W_\Cp}(\grT,\Cp) \xrightarrow{\sim} \cO((G_{(W_{\Cp},T)})_\eta^{\rig}).
	\]
\end{theorem}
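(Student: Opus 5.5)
The proof compares two fiber product diagrams. By Theorem \ref{thm:AdicCharVariety}, after base change from $\Lloc$ to $\Cp$, the character variety $\CharVar{\grT}_W/\Cp$ is the fiber product in \eqref{eq:FiberProd}:
\[
W\otimes_\Lloc \bbA^1_{\Cp}\times_{\Hom_{\Zp}(\grT,\Cp)\otimes\bbA^1}\bigl(\Hom_{\Zp}(\grT,\Zp)\otimes_{\Zp}(\Gmf)^{\rig}_\eta\bigr).
\]
The bottom row here is the inclusion $W_{\Cp}\otimes_{\Cp}\bbA^1\subseteq \Hom_{\Zp}(\grT,\Cp)\otimes_{\Cp}\bbA^1$. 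Base change to $\Cp$ commutes with fiber products, so this holds over $\Cp$ as well. On the other side, \eqref{eq:ScholzeWeinsteinCartDiag} describes $(G_{(W_{\Cp},T)})^{\ad}_\eta$ as exactly the same fiber product of adic spaces. The same logarithm map is used in both, and the lower horizontal arrow is the same inclusion $W_{\Cp}\subseteq\Hom_{\Zp}(T,\Cp)$.

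The first step is to check that the two diagrams really match. Both use the logarithm of $\Gmf$ tensored with $\Hom_{\Zp}(T,\Zp)$, and both use the identification $\Hom_{\Zp}(T,\Cp)\otimes_{\Cp}\bbA^1=\Hom_{\Zp}(T,\Zp)\otimes_{\Zp}\bbA^1$.

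The second step passes between the rigid and adic worlds. The functor $X\mapsto X^{\ad}$ from rigid spaces to adic spaces is fully faithful. It also preserves fiber products, since fiber products of rigid spaces are computed on affinoids by completed tensor products, which is the same recipe used for adic spaces of this type. It follows that
\[
(\CharVar{\grT}_W/\Cp)^{\ad}\cong (G_{(W_{\Cp},T)})^{\ad}_\eta\cong \bigl((G_{(W_{\Cp},T)})^{\rig}_\eta\bigr)^{\ad},
\]
using the compatibility $(\cdot)^{\ad}\circ(\cdot)^{\rig}_\eta\simeq(\cdot)^{\ad}_\eta$ recalled at the start of Section \ref{sec:p-div}. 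Full faithfulness then gives an isomorphism of rigid spaces $(G_{(W_{\Cp},T)})^{\rig}_\eta\cong\CharVar{\grT}_W/\Cp$. This isomorphism is compatible with the group structures, because all maps in both diagrams are group homomorphisms.

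The second claim follows by transport. Theorem \ref{thm:pAdicFourierMainTheoremW} with $\Cloc=\Cp$ gives $\cF_W\colon D_W(\grT,\Cp)\xrightarrow{\sim}\cO(\CharVar{\grT}_W/\Cp)$. By the remark after Definition \ref{def:differential-condition}, $D_W(\grT,\Cp)=D_{W_{\Cp}}(\grT,\Cp)$. Composing with pullback along the isomorphism above gives the stated topological isomorphism, since an isomorphism of rigid spaces induces a topological isomorphism of global sections.

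The main obstacle I expect is that the fiber product in \eqref{eq:ScholzeWeinsteinCartDiag} is formed among adic spaces, which are not necessarily of finite type. To handle this, I would first note that $\Hom(T,\Zp)\otimes(\Gmf)^{\ad}_\eta$ is the open unit polydisc, which is a union of affinoids. The logarithm is a finite étale cover onto its image on each such disc, so the fiber product is locally the closed analytic subspace cut out by linear equations in an affinoid. This is the analytification of the rigid fiber product, so the identification is local and glues.
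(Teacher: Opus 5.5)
Your proposal is correct and follows the paper's proof. Both spaces are identified with the same fiber product (\eqref{eq:FiberProd} versus \eqref{eq:ScholzeWeinsteinCartDiag}), analytification commutes with fiber products, and the Fourier statement is transported from Theorem \ref{thm:pAdicFourierMainTheoremW}. Your last paragraph is harmless but unnecessary: every space in the diagram is already locally of finite type, and the bottom arrow is a closed immersion, so the fiber product is just the closed subspace of the open polydisc cut out by the pulled-back linear equations, with no need for any finiteness property of $\log$.
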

\begin{proof}
By \eqref{eq:ScholzeWeinsteinCartDiag}, the adic space $(G_{(W_{\Cp},T)})_\eta^{\ad}$ is given by the Cartesian diagram
\begin{equation*}
	\xymatrix{
		(G_{(W_{\Cp},T)})_\eta^{\ad} \ar[r]\ar[d] &  \Hom_{\Zp}(\grT,\Zp)\otimes_\Zp (\Gmf)_\eta^{\ad} \ar[d]^-{\log}\\
		W_{\Cp}\otimes_{\Cp} \mathbb{A}^1 \ar[r] & \Hom_{\Zp}(\grT,\Zp)\otimes_{\Zp} \mathbb{A}^1.
	}
\end{equation*}
By equation \eqref{eq:FiberProd} in Theorem \ref{thm:AdicCharVariety}, the adic space associated to the character variety $\CharVar{\grT}_W/\Cp$ is given by the same pull-back diagram. The claim follows since the functor $(\cdot)^{\ad}$  commutes with fiber products.
\end{proof}
From now on, we see rigid analytic varieties as a full subcategory of adic spaces. The theorem implies that every generalized character variety is isomorphic to an essentially unique $p$-divisible group over $\Cp$, and conversely every $p$-divisible group over $\Cp$ gives rise to a character variety.

We discuss some examples of $p$-divisible groups over $\OCp$ and their associated character varieties. We start with the following trivial example:

\begin{corollary}
The pair $(\Hom_{\Zp}(T,\Cp),T)\in \HTDpairs$ corresponds to the $p$-divisible group $\Hom_{\Zp}(T,\Zp)\otimes \mu_{p^\infty}$ and one obtains the following isomorphism of rigid analytic varieties over $\Cp$
\[
	\CharVar{\grT}\cong \Hom_{\Zp}(T,\Zp)\otimes (\Gmf)_{\eta}^{\rig}.
\]
\end{corollary}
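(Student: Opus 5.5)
The plan has two parts: identify the $p$-divisible group $G_{(W,T)}$ attached to the full pair $W=\Hom_{\Zp}(T,\Cp)$, and then identify the character variety $\CharVar{\grT}$ by specializing Theorem \ref{thm:AdicCharVariety}.

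For the first part, set $G:=\Hom_{\Zp}(T,\Zp)\otimes\mu_{p^\infty}$, which is a product of $\mathrm{rk}\,T$ copies of $\mu_{p^\infty}$ once a basis is chosen. I will compute $HT^\vee(G)$ directly. The dual is $G^\vee\cong T\otimes\Qp/\Zp$, so $T_p(G^\vee)\cong T$ canonically. Since $\Lie\mu_{p^\infty}\otimes\Cp=\Cp$, we get
\[
\Lie G\otimes\Cp=\Hom_{\Zp}(T,\Zp)\otimes\Cp=\Hom_{\Zp}(T,\Cp).
\]
Thus $\Lie G\otimes\Cp$ fills out all of $\Hom_{\Zp}(T_p(G^\vee),\Cp)$; equivalently, $G$ is of multiplicative type, so the Hodge--Tate inclusion \eqref{eq:Hodge-Tate-inclusion} is an equality. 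By Theorem \ref{thm:ScholzeWeinstein}, $G\cong G_{(\Hom_{\Zp}(T,\Cp),T)}$. The only point needing care is the canonicity of these identifications, i.e.\ that the Cartier-duality isomorphism matches the Lie algebra with the full Hom space. I will check this by reducing to $T=\Zp$ and $G=\mu_{p^\infty}$, where $G^\vee=\Qp/\Zp$ and the Hodge--Tate map is an isomorphism $\Lie\mu_{p^\infty}\otimes\Cp\cong\Cp$.

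A shortcut is also available. In \eqref{eq:ScholzeWeinsteinCartDiag} the bottom map is the identity when $W$ is everything, so the fiber product is just $\Hom_{\Zp}(T,\Zp)\otimes(\Gmf)^{\ad}_\eta$. This is the generic fiber of $\Hom_{\Zp}(T,\Zp)\otimes\mu_{p^\infty}$, because $\Gmf=\mu_{p^\infty}$ as formal groups over $\OCp$.

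For the second part, take $W=\Hom_{\Zp}(T,\Lloc)$ with $\Lloc=\Qp$. Then $C^{\an}_W(T,\Cloc)=C^{\an}(T,\Cloc)$ and $\CharVar{\grT}_W=\CharVar{\grT}$. In diagram \eqref{eq:FiberProd} the bottom arrow is again an isomorphism, so the pullback is the top right corner:
\[
\CharVar{\grT}\cong\Hom_{\Zp}(T,\Zp)\otimes(\Gmf)^{\rig}_\eta.
\]
This already follows from the isomorphism \eqref{eq:QpCharIso}. Combining it with Theorem \ref{thm:UniformizationOfCharVar} gives the corollary.

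I expect no real obstacle. The only point requiring attention is the first part's compatibility of the identification $T_p(G^\vee)\cong T$ with the Hodge--Tate inclusion, and the reduction to rank one settles it.
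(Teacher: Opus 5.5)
Your proposal is correct and follows the paper's proof. The paper's proof is exactly the computation $HT^\vee(\Hom_{\Zp}(T,\Zp)\otimes\mu_{p^\infty})=(\Hom_{\Zp}(T,\Cp),T)$ combined with Theorem \ref{thm:UniformizationOfCharVar}, and your observation that \eqref{eq:FiberProd} degenerates when $W$ is everything makes explicit what the paper uses implicitly via \eqref{eq:QpCharIso}. Your canonicity worry is harmless: the Hodge--Tate inclusion is an injection between $\Cp$-vector spaces of the same dimension $\mathrm{rk}\,T$, so it is an equality under any identification $T_p(G^\vee)\cong T$, and no reduction to rank one is needed.
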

\begin{proof}
One has $HT^\vee(\Hom_{\Zp}(T,\Zp)\otimes \mu_{p^\infty})=(\Hom_{\Zp}(T,\Cp),T)$.
\end{proof}

As a less trivial example, one gets the main result of Schneider--Teitelbaum about the uniformization of locally $\Lloc$-analytic character varieties from Theorem \ref{thm:UniformizationOfCharVar}:

\begin{corollary}[{\cite[Theorem 3.6]{ST}}]
Let $\Lloc$ be a finite extension over $\Qp$. Let $G$ be the $p$-divisible group over $\OCp$ associated to a Lubin--Tate formal group with endomorphisms by $\sO_\Lloc$, then $HT^\vee(G)\cong  (\Hom_{\Lloc}(\Lloc,\Cp),\sO_\Lloc)\in \HTDpairs$. In particular, one has
\[
	\CharVar{T}^{\Llocan}\cong G^{\rig}_\eta
\] 
\end{corollary}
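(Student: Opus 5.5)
The plan is to compute $HT^\vee(G)=(\Lie G\otimes_{\OCp}\Cp,\,T_p(G^\vee))$ for the Lubin--Tate $p$-divisible group $G$ with $\sO_\Lloc$-action, identify it with $(\Hom_{\Lloc}(\Lloc,\Cp),\sO_\Lloc)$ as an object of $\HTDpairs$, and then apply Theorem \ref{thm:UniformizationOfCharVar} together with Example \ref{exmpl:DistrL}.

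\textbf{Step 1: the Tate module.} The Lubin--Tate group has height $[\Lloc:\Qp]$ and dimension $1$. Its Tate module $T_p(G)$ is free of rank one over $\sO_\Lloc$, and the dual $G^\vee$ inherits an $\sO_\Lloc$-action via the transposed action. Hence $T_p(G^\vee)=\Hom_{\Zp}(T_p(G),\Zp(1))$ is also an invertible $\sO_\Lloc$-module, so we may fix $T_p(G^\vee)\cong \sO_\Lloc$ as $\sO_\Lloc$-modules. Here one uses that the trace pairing identifies $\Hom_{\Zp}(\sO_\Lloc,\Zp)$ with the inverse different, which is free of rank one.

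\textbf{Step 2: the Lie algebra.} The functor $HT^\vee$ is compatible with endomorphisms. Therefore the inclusion $\Lie G\otimes\Cp\subseteq \Hom_{\Zp}(T_p(G^\vee),\Cp)=\Hom_{\Zp}(\sO_\Lloc,\Cp)$ is $\sO_\Lloc$-equivariant. Decompose
\[
\Hom_{\Zp}(\sO_\Lloc,\Cp)=\prod_{\tau\colon \Lloc\hookrightarrow\Cp}\Cp,
\]
where the factor indexed by $\tau$ is the part on which $\sO_\Lloc$ acts through $\tau$. Since $G$ is a formal $\sO_\Lloc$-module, $\sO_\Lloc$ acts on the one-dimensional space $\Lie G$ through the fixed embedding $\Lloc\subseteq\Cp$. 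Hence $\Lie G\otimes\Cp$ is the eigenline for the identity embedding, and this eigenline is exactly $\Hom_{\Lloc}(\Lloc,\Cp)$.

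The main point requiring care is the interaction of the twist and duality. One has to check that, under the identification via Cartier duality, the $\sO_\Lloc$-action on $\Hom_{\Zp}(T_p(G^\vee),\Cp)$ induced by the action on $G^\vee$ agrees with the action on $\Lie G$. Otherwise one obtains a conjugate embedding. This should follow from the functoriality of the Hodge--Tate sequence in $G$, applied to the endomorphisms $[a]$ for $a\in\sO_\Lloc$.

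\textbf{Step 3: conclusion.} By Steps 1 and 2, $HT^\vee(G)\cong(W_{\Cp},\sO_\Lloc)$ with $W=\Hom_{\Lloc}(\Lloc,\Lloc)$. Since $HT^\vee$ is an equivalence (Theorem \ref{thm:ScholzeWeinstein}), $G\cong G_{(W_{\Cp},\sO_\Lloc)}$. Theorem \ref{thm:UniformizationOfCharVar} then gives $G^{\rig}_\eta\cong\CharVar{(\sO_\Lloc)}_W$, and Example \ref{exmpl:DistrL} identifies $\CharVar{(\sO_\Lloc)}_W$ with the locally $\Lloc$-analytic character variety $\CharVar{T}^{\Llocan}$.
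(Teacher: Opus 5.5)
Your proposal is correct and takes essentially the same route as the paper. You fix $T_p(G^\vee)\cong\sO_\Lloc$ as a free rank-one $\sO_\Lloc$-module, use $\sO_\Lloc$-equivariance of the Hodge--Tate inclusion to identify $\Lie G\otimes\Cp$ with the eigenline on which $\sO_\Lloc$ acts through the fixed embedding, namely $\Hom_{\Lloc}(\Lloc,\Cp)$, and conclude with Theorem \ref{thm:UniformizationOfCharVar} and Example \ref{exmpl:DistrL}. The compatibility you flag in Step 2 does hold: functoriality of the Hodge--Tate map of $G^\vee$ gives $\phi(a^\vee y)=a\,\phi(y)$ for every $\phi$ in the image of $\Lie G$, so no conjugate embedding appears. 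Your write-up just makes explicit a point the paper states without comment.
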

\begin{proof}
It suffices to show that $HT^\vee(G)\cong (\Hom_{\Lloc}(\Lloc,\Cp),\sO_\Lloc)\in \HTDpairs$. The image of the injective map in the Hodge-Tate sequence
\[
	\Lie G\otimes_{\OCp} \Cp \hookrightarrow T_pG(-1)\otimes_\Zp \Cp\cong \Hom_{\Zp}(T_p(G^\vee), \Cp).
\]
is exactly the sub-space of $\Hom_{\Zp}(T_pG^\vee, \Cp)$, where the endomorphisms $\sO_\Lloc$ of $F_{LT}$ act by the fixed inclusion $\sO_\Lloc\subseteq \Cp$. The Tate module $T_p(G^\vee)$ is a free $\sO_\Lloc$-module of rank $1$. Hence, after fixing a generator of $T_p(G^\vee)$ as $\sO_\Lloc$-module, one obtains the desired isomorphism in $\HTDpairs$:
 \[
 	HT^\vee(G)\cong (\Lie G\otimes_{\OCp} \Cp, T_p(G^\vee))  \cong (\Hom_{\Lloc}(\Lloc,\Cp),\sO_\Lloc).\qedhere
 \]
\end{proof}

\part{$p$-adic $L$-functions of Hecke characters}
We apply the results on the $p$-adic Fourier transform of the first part  to the interpolation of critical $L$-values of algebraic Hecke characters. We show that the generating function of these special values for Hecke characters of $\infty$-type $-\alpha$ constructed in \cite{Kings-Sprang} gives rise to rigid analytic functions on the $p$-divisible group of the attached abelian variety with complex multiplication, which  results  in the desired interpolation properties. 
\section{Hecke $L$-functions of totally imaginary fields}
We start with some notations, which will be used in the whole part II. Fix an algebraic closure $\ol{\QQ}$ of $\Q$ and once for all we fix an embedding
\begin{equation*}
    \iota_\infty:\ol{\QQ}\to \C.
\end{equation*}
Let $L\subset \ol{\QQ}$ be a totally imaginary field of degree $2d=[L:\Q]$ with integers $\sO_L$, discriminant $d_L$ and group of ideles $\bbA_L^\times$. We write $J_L:=\Hom_{\QQ}(L,\ol{\QQ})$ for the set of field embeddings of $L$ to $\ol{\QQ}$ and $\ZZ^{J_L}$ for the free abelian group on $J_L$. A \emph{Hecke character} is a continuous group homomorphism
\[
	\chi\colon \bbA_L^\times/L^\times \to \CC^\times.
\]
The largest ideal $\frf\subseteq \sO_L$ such that $\chi$ factors as
\[
	\chi\colon \bbA_L^\times/L^\times \twoheadrightarrow \bbA_L^\times/U(\frf)L^\times \to \CC^\times,
\]
where $U(\frf):=\ker\left( \sO_L\otimes\widehat{\Z}\to (\sO_L/\frf)^\times \right)$, is called the \emph{conductor} of $\chi$. A Hecke character is called \emph{algebraic of infinity type $\mu\in\Z^{J_L} $ } if its restriction to the infinite ideles $\bbA_{L,\infty}^\times$ is of the form
\[
	\chi|_{\bbA_{L,\infty}^\times}\colon \bbA_{L,\infty}^\times=(L\otimes \RR)^\times \to \bbA_L^\times/L^\times \xrightarrow{\chi} \CC^\times, \quad (x\otimes 1)\mapsto \prod_{\sigma\in J_K} \sigma(x)^{-\mu(\sigma)}.
\]
Let $\cI(\frf)$ be the group of fractional ideals coprime to $\frf$.
For any prime ideal $\frp\in \cI(\frf)$ we define
\[
	\chi(\frp):=
	\chi(\dots,1,\pi,1,\dots),
\]
where $\pi$ is a uniformizer of $\frp$  
and extend $\chi$ multiplicatively to   all  of $\cI(\frf)$. This allows us to give the following equivalent more classical point of view on Hecke characters. Let
\begin{equation*}
    \cP_{\frf}:=\{(\xi)\in \cI(\frf): \xi\equiv 1\,\mathrm{mod}^\times\, \frf \}\subset \cI(\frf),
\end{equation*}
be the subgroup of principle ideal $\equiv 1$ modulo $\frf$.
An algebraic Hecke character $\chi$  of infinity type $\mu\in\Z^{J_L} $ is a group homomorphism
\[
	\chi \colon \cI(\frf) \to \CC^\times,
\]
such that the restriction of $\chi$ to $\cP_\frf$ is given by 
\[
	\chi((\xi))=\xi^\mu:=\prod_\sigma \sigma(\xi)^{\mu(\sigma)}.
\]
The  Hecke $L$-function of $\chi$ modulo $\frf$ is
\[
	L_\frf(\chi,s):=\sum_{\substack{\fra\subseteq \sO_L \\ (\fra,\frf)=1}} \frac{\chi(\fra)}{N\fra^s}.
\]
If $\frf$ is the conductor of $\chi$, then we will simply write $L(\chi,s)$. In general, $L_\frf(\chi,s)$ differs from $L(\chi,s)$ exactly by the finitely many Euler factors corresponding to prime ideals dividing $\frf$. Let $\chi$ be an algebraic Hecke character of conductor $\frf$, then the completed Hecke $L$-function $\Lambda(\chi, s):=(|d_L|\cdot N\frf)^{s/2}\Gamma_\infty(\chi, s) \cdot L(\chi,s)$, where  $\Gamma_\infty(\chi,s)$ is essentially a product of $\Gamma$-functions, satisfies a functional equation
\[
	\Lambda(\chi,s )=\epsilon(\chi)\Lambda(\chi^{-1},1-s)
\]
for some complex number $\epsilon(\chi)$ which has absolute value $1$ under each embedding. Deligne calls $L(\chi,s)$ \emph{critical} at an integer $s=k$ if  neither $\Gamma_\infty(\chi, s)$ nor $\Gamma_\infty(\chi^{-1}, 1-s)$ has a pole at $s=k$. By multiplication with the norm character $N(\cdot)$ and by the formula $L(\chi\cdot N(\cdot)^{-k},s)=L(\chi, s+k)$, the investigation of special values of Hecke $L$-functions is reduced to the study of these functions at $s=0$. We call the Hecke character $\chi$ critical if $L(\chi, s)$ is critical at $s=0$. Critical Hecke characters $\chi$ of infinity type $\mu$ can only exist, if there exists a CM sub-field $K\subseteq L$ and an infinity type $\mu_K\in \ZZ^{J_{K}}$ such that $\mu$ is induced from $\mu_K$, i.e.,
\[
	\mu(\sigma)=\mu_K(\sigma|_K).
\]	
Furthermore, the set $\Sigma_0:=\{ \sigma_0\in J_K: \mu_K(\sigma_0)<0 \}$ is a CM type of the CM field $K$, i.e., $J_K$ is the disjoint union of $\Sigma_0$ and the complex conjugate set $\overline{\Sigma}_0$. Thus,  a critical Hecke character $\chi$ of a totally imaginary field $L$ does always come with a CM subfield $K\subseteq L$ and a CM type $\Sigma_0$. The subset 
\begin{equation}\label{eq:inducedCMtype}
	\Sigma:=\{ \sigma \in J_L: \sigma|_K \in \Sigma_0 \}
\end{equation}
is called \emph{the CM type induced by $\Sigma_0$}. If we write \emph{$\Sigma$ is the CM type of $L$} we will always mean that $\Sigma$ is a subset of $J_L$ which is induced by a CM type $\Sigma_0$ of a CM subfield, and we will write $\ol{\Sigma}$ for the CM type induced by $\ol{\Sigma}_0$.

\begin{notation}\label{not:CMfields}
We write $K$ for the maximal CM subfield of $L$ and we will fix a CM type $\Sigma\subseteq J_L$ of $L$, induced from $K$, and consider critical Hecke characters $\chi$ of this fixed CM type.
\end{notation} 
In particular, a general critical Hecke character of CM type $\Sigma$ has an infinity type of the form
\[
	\mu=\beta -\alpha
\]
with $\beta \in \NN^{\ol{\Sigma}}$ and $\alpha\in \NN_{>0}^{\Sigma}$. We define $\ul{1}:=\sum_{\sigma \in \Sigma} \sigma\in \NN^{\Sigma}$. With this notation, we will write $\alpha\geq \underline{1}$ for the condition $\alpha(\sigma)\geq1$ for all $\sigma\in \Sigma$.
In this paper, we will mainly be interested in the case $\beta=0$, i.e., we will only consider critical Hecke characters of infinity type $-\alpha\in \ZZ^{\Sigma}$ and $\alpha\geq \underline{1}$. In this case, we have
\[
	\{\sigma: \mu(\sigma)\neq 0\}=\Sigma.
\]

\section{$p$-adic Hecke characters}\label{sec:p-adic-Hecke}
Fix once for all a prime number $p$ unramified in $L$ and  an embedding \begin{equation*}
    \iota_p:\ol{\QQ}\hookrightarrow \Cp.
\end{equation*}
Let $\frf\subset\sO_L$ be co-prime to $p$. A \emph{$p$-adic Hecke character} of conductor $p^\infty \frf$ is a continuous group homomorphism \begin{equation*}
    G(p^\infty \frf)=\Gal(L(p^\infty \frf)/L)\to \OCp^\times.
\end{equation*}
 To every algebraic Hecke character $\chi:\cI(p\frf)\to \C^{\times}$ of conductor dividing $p^\infty \frf$ and infinity type $\mu\in \ZZ^{J_L}$, we can associate a $p$-adic Hecke character
\[
	\chi^{\padic}\colon G(p^\infty \frf)\to \OCp^\times,
\]
as follows: Using the embedding $\iota_p$, we consider $\chi$ as a character $\chi \colon \cI(p\frf)\to \Cp^\times$
with
\begin{equation*}\label{eq:p-adic-Hecke-char-2}
	\chi((\lambda))=\lambda^\mu,\quad \text{ for all }\lambda \in \cP_{p\frf},
\end{equation*}
which implies  that $\chi(\cP_{p^n\frf})\subseteq 1+p^n\OCp$.
Together with the finiteness of $\cI(p\frf)/\cP_{p\frf}$, we deduce that $\chi(\cI(p\frf))\subseteq \OCp^\times$. Passing to the limit, one obtains a continuous homomorphism
\[
	\chi\colon \varprojlim_n \cI(p\frf)/\cP_{p^n\frf}\to \OCp^\times.
\]
Finally, by class field theory, the Artin map induces a canonical isomorphism
\[
	\varprojlim_n \cI(p\frf)/\cP_{p^n\frf}\xrightarrow{\sim} G(p^\infty \frf).
\]
\begin{definition}
	Let $\frf$ be an ideal of $\sO_L$ and $\chi$ an algebraic Hecke character of conductor dividing $p^\infty \frf$. The continuous homomorphism 
	\[
	\chi=\chi^{\padic}\colon G(p^\infty \frf)\cong \varprojlim_n \cI(p\frf)/\cP_{p^n\frf}\to \OCp^\times
	\]
	is called the \emph{$p$-adic Hecke character associated to $\chi$}, which by abuse of notation is simply written as $\chi$, if there is no risk of confusion.
\end{definition}

Using the results of section \ref{sec:Hecke-character-variety} of part I, we can view $\chi=\chi^{\padic}$ as a $\Cp$-point of $ \widehat{G(p^\infty\frf)}$.

\begin{lemma}\label{lem:Sigma-analytic-Hecke-char}
	Let $\chi$ be a critical Hecke character of conductor dividing $p^\infty \frf$, CM type $\Sigma$ and of infinity type $-\alpha\in \ZZ^{\Sigma}$ with $\alpha\geq \underline{1}$. Then the $p$-adic Hecke character  associated to  $\chi$ is locally $\Sigma$-analytic in the sense of Definition \ref{def:Sigma-analytic-Galois}.
\end{lemma}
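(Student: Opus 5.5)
By the explicit description following Definition \ref{def:Sigma-analytic-Galois}, it suffices to show the following. For each $g\in\Gal(L(\frf)/L)$, the function $x\mapsto \chi(\varphi(x)\tilde g)$ on $(\sO_L\otimes\Zp)^\times$ is locally $\Sigma$-analytic in the sense of Definition \ref{def:SigmaAnalytic}, where $\tilde g$ is a fixed lift of $g$. Since this function is $\chi(\tilde g)\cdot(\chi\circ\varphi)(x)$, it is enough to treat the character $\chi\circ\varphi\colon(\sO_L\otimes\Zp)^\times\to\OCp^\times$. A character is locally $\Sigma$-analytic as soon as it is $\Sigma$-analytic on some open subgroup $U$: on a translate $x_0U$ it equals $\chi\circ\varphi(x_0)\cdot\chi\circ\varphi(x_0^{-1}x)$, and $z\mapsto x_0^{-1}z$ is $\sO_L\otimes\Zp$-linear, so $\sigma(x_0^{-1}z)=\sigma(x_0)^{-1}\sigma(z)$ preserves series in the $\sigma(z)$, $\sigma\in\Sigma$. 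Alternatively, by Proposition \ref{lem:char-diagram} it suffices to check $(d(\chi\circ\varphi))|_1\in W(\Sigma)$.

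The key step is to compute $\chi\circ\varphi$ explicitly on a small open subgroup, say $U=1+p^n(\sO_L\otimes\Zp)$ with $n\ge1$ large. Take $u\in U$ and approximate it by a global unit-like element. By strong approximation, choose $\lambda\in\sO_L$ with $\lambda\equiv u \bmod p^m$ for large $m$ and $\lambda\equiv1\bmod\frf$. Then $(\lambda)\in\cP_{p\frf}$, so $\chi((\lambda))=\lambda^{\mu}=\prod_{\sigma\in\Sigma}\sigma(\lambda)^{-\alpha(\sigma)}$, where $\sigma(\lambda)$ is viewed in $\Cp$ via $\iota_p$. One must track how the Artin map relates the ideal $(\lambda)$ to the local idele $u$. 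The idele $\lambda$ (diagonal) is trivial in the idele class group. It differs from the idele that is $u$ at $p$ and $1$ elsewhere by factors that are trivial modulo $U(p^m\frf)$ at $p$ and at $\frf$, by the prime-to-$p\frf$ part corresponding to the ideal $(\lambda)$, and by the infinite component. Hence $\chi^{\padic}(\varphi(u))=\prod_{\sigma\in\Sigma}\sigma(u)^{\alpha(\sigma)}$ up to an overall sign convention, i.e.\ up to replacing $u$ by $u^{-1}$. Passing to the limit $m\to\infty$ by continuity gives $\chi\circ\varphi(u)=\prod_{\sigma\in\Sigma}\sigma(u)^{\pm\alpha(\sigma)}$ for all $u\in U$. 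Here $\sigma(u)$ means the image of $u$ under $\sigma\otimes\Zp\colon\sO_L\otimes\Zp\to\Cp$. This is exactly the standard recipe for the $p$-adic avatar: multiply by $x_p^{-\mu}$ and drop the infinity component.

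Given this formula, $\Sigma$-analyticity is immediate. For a positive exponent the function is a monomial $z^{\alpha}$. For a negative exponent, $\sigma(u)^{-a}$ with $\sigma(u)\in 1+p^n\OCp$ expands as a convergent binomial series in $\sigma(u)-1$, hence as a power series in the $\sigma(z)$, $\sigma\in\Sigma$. Equivalently, $d(\chi\circ\varphi)|_1=\pm\sum_{\sigma\in\Sigma}\alpha(\sigma)\sigma$, which lies in $W(\Sigma)$, and Proposition \ref{lem:char-diagram} applies. The only components that appear are indexed by $\Sigma$ because the infinity type $-\alpha$ is supported on $\Sigma$; the sign of $\alpha$ is irrelevant here.

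I expect the main obstacle to be the bookkeeping in the class field theory step. Concretely, this means checking that the map $\varphi$ (local units to $\Gal(L(p^\infty\frf)/L(\frf))$) matches the limit $\varprojlim\cI(p\frf)/\cP_{p^n\frf}$ used in the definition of $\chi^{\padic}$, in particular the normalization of the Artin map and the sign. Since the sign does not affect analyticity, I would fix one convention and verify only that no components outside $\Sigma$ appear.
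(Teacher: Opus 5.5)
Your proposal is correct and follows the paper's argument. Both reduce by translation to a neighbourhood of the identity, where the character composed with the map from the local units is the monomial $z\mapsto z^{\pm\alpha}$, hence $\Sigma$-analytic. The paper writes $z^{-\alpha}$; as you note, the sign depends on the normalization of the Artin map and is irrelevant. The only difference is that you spell out the strong-approximation/class-field-theory verification of this formula on a small open subgroup, which the paper simply asserts from the infinity type.
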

\begin{proof}
	After translation by an element of $\Gal(L(p^\infty \frf)/L)$, it suffices to check that $\chi$ is locally $\Sigma$-analytic in a neighbourhood of $1\in \Gal(L(p^\infty \frf)/L)$. Since $\chi$ is of infinity type $-\alpha$, the composition
	\[
			\pr\circ (\chi|_{\pi^{-1}(\{1\})}) \colon (\sO_L\otimes \Zp)^\times \twoheadrightarrow \pi^{-1}(\{1\})\subseteq \Gal(L(p^\infty \frf)/L)\xrightarrow{\chi} \Cp
		\]
	is given by $z\mapsto z^{-\alpha}$ with $\alpha\in \NN^\Sigma$, which is a $\Sigma$-analytic map.
\end{proof}

\section{The main result}
We keep the previous setup, i.e., $L$ is a totally imaginary field containing a CM field, $p$ is a prime which is unramified in $L$, and $\Sigma$ is a CM type of $L$ induced from a CM field $K\subseteq L$.
\begin{theorem}\label{thmA}
Let  $\frf\subseteq \sO_L$ be a non-trivial ideal which is co-prime to $p$. There is a rigid analytic function $L_p\in \cO(\widehat{G(p^\infty \frf)}_{W(\Sigma)})$ on the $\Sigma$-analytic Hecke character variety with the following interpolation property: For every critical algebraic Hecke character $\chi$ of conductor dividing $p^\infty \frf$, CM type $\Sigma$ and infinity type $-\alpha\in \ZZ^\Sigma$ with $\alpha\geq \underline{1}$, one has
\[
	\frac{L_p(\chi)}{\Omega_p^{\alpha}} =(\alpha-\underline{1})! \Local(\chi,\Sigma) \prod_{\frp\mid p} \left( 1-\frac{\chi(\frp^{-1})}{N\frp} \right)\frac{L_\frf(\chi,0)}{\Omega^{\alpha}},
\] 
where $\Omega$ (resp. $\Omega_p$) are ($p$-adic) periods of an abelian variety $\cA$ with $\cA(\CC)=\CC^\Sigma/\sO_L$, see section \ref{sec:Periods}, and $\Local(\chi,\Sigma)$ is an explicit local factor, see Definition \ref{def:Local-factor}.
\end{theorem}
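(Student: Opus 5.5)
The construction of $L_p$ takes place on $T:=\sO_L\otimes\Zp$. Concretely: build a $W(\Sigma)$-analytic distribution on $T$ from the Eisenstein–Kronecker classes of \cite{Kings-Sprang}, restrict it to the units $(\sO_L\otimes\Zp)^\times$, push it to $G(p^\infty\frf)$, and take its Fourier transform via Theorem \ref{thm:FT-abelian}.

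\textbf{Step 1: a distribution on $T$.} Let $\cA$ be the CM abelian variety with $\cA(\CC)=\CC^\Sigma/\sO_L$; it has good reduction at $p$ over $\OCp$ since $p$ is unramified. Its $p$-divisible group $G:=\cA[p^\infty]$ carries an $\sO_L\otimes\Zp$-action. I would identify $HT^\vee(G)$ with the pair $(W(\Sigma)_{\Cp},T)$, after choosing an $\sO_L\otimes\Zp$-generator of $T_p(\cA^\vee)$ (or of the appropriate dual). The identification should hold because $\Lie G\otimes\Cp$ is the $\Sigma$-isotypic part of $\Hom(T,\Cp)$, exactly as in the Lubin–Tate corollary. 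Theorem \ref{thm:UniformizationOfCharVar} then gives $\CharVar{T}_{W(\Sigma)}\cong G^{\rig}_\eta$ and $D_{W(\Sigma)}(T,\Cp)\cong\cO(G^{\rig}_\eta)$.

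The equivariant Eisenstein–Kronecker class of \cite{Kings-Sprang} for the $\frf$-torsion translate provides a function $\mathcal{E}$ on the formal completion of $\cA$ along its zero section, hence on the open unit polydisc. Twisting by $\frf$-torsion and removing the $p$-part (the usual $[\mathfrak{c}]$-regularisation / $p$-depletion, which produces the factor $\prod_{\frp\mid p}(1-\chi(\frp^{-1})N\frp^{-1})$) yields a rigid function on $G^{\rig}_\eta$. Let $\mu$ be the corresponding $W(\Sigma)$-analytic distribution.

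\textbf{Step 2: restriction and pushforward.} Restrict $\mu$ to the open compact subset $T^\times$ by multiplying with the indicator function; this is a locally constant, hence $W$-analytic, function. Push the result forward along $\varphi\colon T^\times\to T^\times/E(\sO_\frf^\times)\to G(p^\infty\frf)$, summing over the ray classes of $\Gal(L(\frf)/L)$ using Proposition \ref{prop:decomp-Gal-distributions}. The pushforward preserves $W(\Sigma)$-analyticity by the definition of the induced condition \eqref{eq:induced-W}. Define $L_p:=\cF_{W(\Sigma)}(\text{this distribution})\in\cO(\widehat{G(p^\infty\frf)}_{W(\Sigma)})$ via Theorem \ref{thm:FT-abelian}.

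\textbf{Step 3: interpolation.} For $\chi$ critical of infinity type $-\alpha$, Lemma \ref{lem:Sigma-analytic-Hecke-char} shows that $\chi$ lies on the $\Sigma$-analytic variety. On each coset, $\chi\circ\varphi$ equals a locally constant function $\rho$ times the $\Sigma$-monomial $z^{-\alpha}$. The exponent is negative, so I would work on the units and use the $\sO_L$-action, for instance by writing $z^{-\alpha}$ through the polynomial $P_\Xi$ with $\Xi=\alpha-\underline{1}$ after applying the inversion/duality of the Eisenstein class. Corollary \ref{cor:dist-integration-formula} then expresses $L_p(\chi)$ as a finite sum over $p$-power torsion points $\widehat{\rho}(\chi')\,(\Xi.f)(\chi')$. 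By \cite{Kings-Sprang}, the values $(\Xi.f)$ at torsion points are Eisenstein–Kronecker numbers; comparing the $p$-adic and complex periods $\Omega_p^\alpha$ and $\Omega^\alpha$ identifies them with partial $L$-values. These also produce the factor $(\alpha-\underline{1})!$. Summing over ray classes gives $L_\frf(\chi,0)$, and the Gauss-sum-type finite Fourier transform $\widehat\rho$ produces $\Local(\chi,\Sigma)$.

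\textbf{Main obstacle.} I expect the hardest part to be Step 3's bookkeeping of the finite Fourier transform at ramified $\frp\mid p$ together with the Euler factor. In particular one must check that restricting to $T^\times$ (i.e.\ the $p$-depletion on the $p$-divisible group) produces exactly $\prod_{\frp\mid p}(1-\chi(\frp^{-1})/N\frp)$ via the action of $\frp$-isogenies. One must also check that the algebraic and $p$-adic trivialisations match so that the periods come out as $\Omega_p^\alpha$ and $\Omega^\alpha$.
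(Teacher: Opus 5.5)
Your overall architecture matches the paper: Fourier theory on $G=\cA[p^\infty]$ with $HT^\vee(G)\cong(W(\Sigma),\sO_L\otimes\Zp)$, Eisenstein--Kronecker classes, restriction to units, inversion, and summation over ray classes. But Step 1 rests on a false premise: for $d>1$ the Eisenstein--Kronecker class is not a function. It lives in $H^{d-1}(\cU_\frc,\Gamma;\Omega^d_\cB)$. Its pullback along the torsion translates $\tau_n$ is a class in $H^{d-1}(\Gamma,\cO(G)\otimes\TSym^{\ul{1}}(\omega_\cA)\otimes\Sym^{\ul{1}}(\Lie(\cA))\otimes\omega^d_\cA)$, i.e.\ degree-$(d-1)$ group cohomology of the rank-$(d-1)$ unit group $\Gamma=\sO_\frf^\times$ with coefficients in $\cO(G)$. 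So there is no ``corresponding distribution $\mu$ on $T$'' to restrict and push forward.

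Pushing forward along $T^\times\to T^\times/E(\Gamma)$ would not help anyway. By Corollary \ref{cor:dist-integration-formula}, its moments are finite sums of derivatives of a function at torsion points. What you need are the $\Gamma$-orbit sums over $\Gamma\backslash\QQ\otimes\Lambda$ defining $E^{0,\alpha}$, and nothing in \cite{Kings-Sprang} gives a function on $G$ with those values. The missing idea is Theorem \ref{thmB}:
\begin{itemize}
\item apply $\cO(G_\eta^{\ad})\cong D_W(T,\Cp)$ to the coefficients;
\item restrict the coefficients to $T^\times$;
\item untwist the $\Sym^{\ul{1}}(\Lie(\cA))$ factor via $P_{\Xi_{\ul{1}}}$;
\item cap with a fundamental class $\xi\in H_{d-1}(\Gamma,R)$ of $\RR_{>0}^\Sigma/\Gamma'$ (Proposition \ref{prop:cap-product}).
\end{itemize}
This lands directly in the coinvariants $D_W(T^\times,\Cp)_\Gamma$, i.e.\ in $D_W(T^\times/E(\Gamma),\Cp)$. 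It is exactly this cap product that produces the orbit sums in Theorem \ref{thm:evaluation-formula}. Only because the distribution lives on the units can the negative exponent then be handled by pulling back along $z\mapsto z^{-1}$. Your ``$\Xi=\alpha-\ul{1}$'' is not the right bookkeeping: one takes $\Xi=\LieA^\alpha$, so $P_\Xi=\Omega_p^\alpha z^\alpha$, and applies $d^{\alpha-\ul{1}}$ to the class.

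Second, even granting a distribution, its Fourier transform does not satisfy the stated formula. The $\frc$-smoothing (the function $f_{[\frc]}$ together with the distribution relation) is prime to $p$. It contributes an extra factor $N\frc-\chi(\frc^{-1})$ to every interpolated value. It is not the source of the Euler factor at $p$; that factor comes from the extension by zero $j_!$ from $T^\times$, via the computation of $\widehat{\Char_\frp}$.

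To obtain $L_p$ you must divide and glue. The quotient $L_{p,\frc}:=\cF(\mu_{\frc,\frf})/(N\frc-\chi(\frc^{-1}))$ is analytic off the zero locus $V(\frc)$, and these quotients agree on overlaps for different $\frc$. Since $N(\cdot)^{-1}$ is not $\Sigma$-analytic, every point of $\widehat{G(p^\infty\frf)}_{W(\Sigma)}$ avoids some $V(\frc)$. Hence the $L_{p,\frc}$ glue to a global rigid function. Without this step your construction only yields $(N\frc-\chi(\frc^{-1}))\cdot L_p$ for a fixed auxiliary $\frc$, which may vanish at some of the characters in question.
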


As a corollary, we can formulate the following consequence which is closer to the 'classical' construction of $p$-adic $L$-functions in the sense of Kubota and Leopoldt.

Let $\frf$ be an ideal of $\sO_L$ co-prime to $p$ and $\chi$ a critical Hecke character of conductor dividing $\frf$ and of infinity type $-\alpha$. We write $\omega_{\chi}$ for the finite $p$-adic Hecke character
\[
	\omega_\chi \colon \Gal(L(p^\infty \frf)/L)\to \sO_{\Cp}^\times\to \ol{\FF}_p^\times \xrightarrow{\omega} \sO_{\Cp}^\times,
\]
where the second map is the reduction map to the residue field, and the lat map is the Teichmüller character. As $\omega_\chi$ is a finite $p$-adic Hecke character, we can use our fixed embeddings $\ol{\QQ}\subseteq \CC$ and $\ol{\QQ}\subseteq \Cp$ to view $\omega_\chi$ as a classical Hecke character of conductor dividing $p^\infty \frf$.

\begin{corollary}
Let $\frf$, $\chi$ and $\omega_\chi$ as above. For any finite Hecke character $\chi_0$ of conductor dividing $p^\infty \frf$ and any positive integer $k$, there exists a unique locally analytic function
\[
	L_p(\cdot, \chi_0 \omega_\chi^k)\colon \Zp \to \Cp, \quad
\]
which is  characterized by the following interpolation formula:
\begin{equation}\label{eq:interpolation-formula}
	\frac{1}{\Omega_p^{k\alpha}}L_p(k, \chi_0\omega_\chi^k) =\frac{(k\alpha-\ul{1})!}{\Omega^{k\alpha}} \prod_{\frp\mid p} \left( 1-\frac{(\chi_0\chi^k)(\frp^{-1})}{N\frp} \right)L_\frf(\chi_0 \chi^k,0).
\end{equation}
\end{corollary}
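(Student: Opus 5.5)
The corollary will be deduced from Theorem \ref{thmA} by restricting the rigid analytic function $L_p$ to a one-parameter family of points of $\widehat{G(p^\infty \frf)}_{W(\Sigma)}$. The family comes from the $p$-adic Hecke character $\chi=\chi^{\padic}$ attached to the fixed critical character of infinity type $-\alpha$.

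\textbf{Step 1: the family $s\mapsto \langle\chi\rangle^s$.} Write $\chi = \omega_\chi\cdot\langle\chi\rangle$, where $\langle\chi\rangle:=\chi\,\omega_\chi^{-1}$ takes values in the principal units $1+\mathfrak{m}_{\Cp}$. For $s\in\Zp$ define
\[
\chi_s:=\chi_0\,\omega_\chi^{k}\,\langle\chi\rangle^{s}.
\]
This requires $\langle\chi\rangle^s=\exp(s\log\langle\chi\rangle)$ to make sense. Since $\chi$ takes values in a finite extension $E$ of $\Qp$, the character $\langle\chi\rangle$ lands in $1+\mathfrak m_E$. Hence $z\mapsto z^s$ is locally analytic in $s$ on $1+\mathfrak m_E$, with the usual shrinking to $1+p^m\sO_E$ on an open subgroup. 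Consequently $s\mapsto \chi_s$ is a locally analytic map $\Zp\to\widehat{G(p^\infty\frf)}(\Cp)$.

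\textbf{Step 2: the family lies in the $\Sigma$-analytic locus.} We check this using Proposition \ref{lem:char-diagram} and the pullback description of $\widehat{A}_W$ in Section \ref{sec:p-adic-Lie}. One has $d(\chi_s)|_0=s\,(d\chi)|_0$, because $\chi_0\omega_\chi^k$ is locally constant. By Lemma \ref{lem:Sigma-analytic-Hecke-char}, $(d\chi)|_0$ lies in $W(\Sigma)_\varphi$, so $d(\chi_s)|_0$ does too. Thus $s\mapsto\chi_s$ factors through $\widehat{G(p^\infty \frf)}_{W(\Sigma)}$, and we set
\[
L_p(s,\chi_0\omega_\chi^k):=L_p(\chi_s).
\]
A rigid analytic function composed with a locally analytic family of points is locally analytic, so this is a locally analytic function on $\Zp$.

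\textbf{Step 3: interpolation at $s=k$.} At $s=k$ we get $\chi_k=\chi_0\omega_\chi^k\langle\chi\rangle^k=\chi_0\chi^k$. This is the $p$-adic character attached to the algebraic Hecke character $\chi_0\chi^k$, which has conductor dividing $p^\infty\frf$, infinity type $-k\alpha$ with $k\alpha\ge\underline 1$, and is critical of CM type $\Sigma$. Theorem \ref{thmA} therefore applies and gives \eqref{eq:interpolation-formula}. One point has to be checked here: $\Local(\chi_0\chi^k,\Sigma)$ should be $1$, or else be absorbed into the normalization. I expect this from Definition \ref{def:Local-factor} when $\chi_0\chi^k$ has conductor prime to $p$. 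For general $\chi_0$ of $p$-power conductor it must either be verified or carried along as a factor in the formula.

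\textbf{Step 4: uniqueness.} Uniqueness is the subtle step, and I expect it to be the main obstacle. The interpolation points $k$ are only positive integers, and a locally analytic function on $\Zp$ is not determined by its values on $\NN$, since it could vary independently on different cosets. The approach I would take is to read "locally analytic" as \emph{analytic on residue classes fixed by the Teichmüller twist}. The character $\omega_\chi^k$ depends only on $k$ modulo the order of $\omega_\chi$, so positive integers congruent to $k$ modulo that order are dense in the relevant disc of $\Zp$, where the function is given by a single convergent power series. An analytic function on such a disc is determined by its values on a dense set of points, and this gives uniqueness. Concretely, I would restrict to $s\in k+(p-1)p^{m}\Zp$, a disc on which $\langle\chi\rangle^s$ is a single power series in $s$, and use the identity theorem for power series on this disc.
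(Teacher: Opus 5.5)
Your Steps 1 and 2 match the paper's construction. The paper also evaluates the function of Theorem \ref{thmA} along $s\mapsto\chi_0\omega_\chi^k\langle\chi\rangle^s$. Your check via $d(\chi_s)|_0=s\,(d\chi)|_0$ and Lemma \ref{lem:Sigma-analytic-Hecke-char} spells out what the paper calls ``easily checked''.

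The first gap is the one you flag yourself in Step 3. The displayed formula has no local factor, so your unnormalized definition proves it only when $\Local(\chi_0\chi^k,\Sigma)=1$. That holds for $\chi_0$ unramified at $p$ (remark after Definition \ref{def:Local-factor}), but for ramified $\chi_0$ you stop. The paper builds a normalization into the definition,
\[
L_p(s,\chi_0\omega_\chi^k):=\Local(\chi_0\omega_\chi^k,\Sigma)^{-1}\,L_p(\chi_0\omega_\chi^k\langle\chi\rangle^s),
\]
and then identifies $\Local(\chi_0\omega_\chi^k,\Sigma)$ with $\Local(\chi_0\chi^k,\Sigma)$, arguing that the two characters have the same finite part at $p$. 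What your phrase ``absorb it into the normalization'' misses is this: the constant you divide by must depend only on the branch character $\chi_0\omega_\chi^k$, not on the integer $k$. The same function must satisfy the formula at every $m\equiv k \bmod q$, and uniqueness uses exactly this. So dividing by $\Local(\chi_0\chi^k,\Sigma)$ is legitimate only once you know it is constant along that residue class. Be careful with the paper's justification, which rests on $\omega_\chi$ being unramified at $p$. In fact $\omega_\chi\circ\varphi$ is the Teichm\"uller lift of the reduction of $z\mapsto z^{-\alpha}$ on $(\sO_L\otimes\Zp)^\times$, which is in general non-trivial. So that identification should be checked, not taken on faith. For $\chi_0$ unramified at $p$ your unnormalized definition does not need it.

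The second gap is in Step 4, whose premise is false. A locally analytic function on $\Zp$ is continuous, hence determined by its values on any dense subset, and $\NN$ is dense in $\Zp$. The real issue is only that a fixed branch $\chi_0\omega_\chi^k$ is pinned down at those $m$ with $\chi_0\omega_\chi^m=\chi_0\omega_\chi^k$, i.e.\ $m\equiv k\bmod q$, where $q$ is the order of $\omega_\chi$. Since $\omega_\chi$ takes values in roots of unity lifted from a finite residue field, $p\nmid q$. Hence $\{m\in\ZZ_{>0}: m\equiv k \bmod q\}$ is dense in all of $\Zp$, and continuity gives uniqueness; this is the paper's argument. Your version restricts to one disc $k+(p-1)p^m\Zp$, which is just $k+p^m\Zp$, and applies the identity theorem there. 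That proves uniqueness only on that disc. No reinterpretation of ``locally analytic'' is needed once you use density of the residue class in the whole of $\Zp$.
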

\begin{proof}
 For $\alpha\in \sO_{\Cp}^\times$, we define $\langle \alpha\rangle:=\frac{\alpha}{\omega(\alpha)}\in 1+\mathfrak{m}_E$. 
We define $L_p(\cdot,\chi_0\cdot \omega_\chi^k )$ as follows
\[
	L_p(s,\chi_0 \omega_\chi^k ):=\frac{1}{ \Local(\chi_0\omega_\chi^k,\Sigma)}L_p (\chi_0 \omega_\chi^k \langle\chi\rangle^s).
\]
It is easily checked that this defines a locally analytic function on $\Zp$. The equality $\langle\chi\rangle \omega_\chi=\chi$ of $p$-adic Hecke characters together with Theorem \ref{thmA} shows, that the function $L_p(\cdot,\chi_0 \omega_\chi^k )$  has the interpolation property
\[
	\frac{1}{\Omega_p^{k\alpha}}L_p(k,\chi_0\omega_\chi^k)=\frac{(k\alpha-\ul{1})!}{\Omega^{k\alpha}}\frac{\Local(\chi_0\chi^k,\Sigma)}{\Local(\chi_0\omega_\chi^k,\Sigma)} \prod_{\frp\mid p} \left( 1-\frac{(\chi_0\chi^k)(\frp^{-1})}{N\frp} \right)L_\frf(\chi_0\chi^k,0).
\]
Since $\chi$ and $\omega_\chi$ are unramified at $p$, we have $(\chi_0\omega_\chi^k)_\fin=(\chi_0)_\fin=(\chi_0\chi^k)_\fin$ , where we use the notation $(\cdot)_{\fin}$ to denote the finite part of a $p$-adic Hecke character, see \eqref{eq:chi-fin}. It follows from the definition of the local factor that $\Local(\chi_0\omega_\chi^k,\Sigma)=\Local(\chi_0\chi^k,\Sigma)$, see \eqref{def:Local-factor}. This shows the interpolation formula \eqref{eq:interpolation-formula}. Note that the interpolation formula for all $k$ and all $\chi_0$ determines the locally analytic function $L_p(\cdot,\chi_0\omega_\chi^k)$ uniquely. Write $q$ for the order of $\omega_\chi$.
If we fix a positive integer $k$ and a finite character $\chi_0$ then we have $\chi_0\omega_\chi^k=\chi_0\omega_\chi^m$ for every $m\equiv k\mod q$. In particular, the interpolation formula \eqref{eq:interpolation-formula} determines $L_p(\cdot,\chi_0\omega_\chi^k)$ on the dense set subset $\{m\in \ZZ_{>0}: m\equiv k\mod q\}$ of $\Zp$. This proves uniqueness.
\end{proof}

\section{Periods, complex and $p$-adic}\label{sec:Periods}
In this section we define the classical and $p$-adic periods, which occur in the main theorem \ref{thmA}.  
We put ourselves in the situation of Notation \ref{not:CMfields}, i.e., $L$ is a totally imaginary field of degree $2d$ with maximal CM subfield $K$ and that $\Sigma$ is a CM type of $L$ induced from  $K$. The prime $p$ is unramified in $L$ and we have fixed embeddings $\iota_\infty: \ol{\QQ} \hookrightarrow \CC$ and $\iota_p\colon \ol{\QQ} \hookrightarrow \CC_p$.

We start by defining the relevant abelian variety with CM by $\sO_L$. The  CM type $\Sigma$ allows us to view $\sO_L$ as a lattice of full rank in $\CC^\Sigma$ using the embedding:
\[
	\sO_L\hookrightarrow \CC^\Sigma, \quad \lambda\mapsto (\sigma(\lambda))_{\sigma \in \Sigma}.
\]
By the theory of complex multiplication, the complex torus $\CC^\Sigma/\sO_L$ consists of the $\C$-valued points of an abelian variety $\cA$ of dimension $d$ with CM by $\sO_L$ defined over $\ol{\QQ}\subset \C$ with good reduction everywhere. Our fixed embedding $\iota_p$ defines a prime ideal $\frP$ of $\ol{\ZZ}$ and we can consider $\cA$ as defined over $R:=\overline{\ZZ}_{\frP}$ the localization of $\ol{\ZZ}$ at $\frP$. Via the fixed embeddings $\iota_\infty$, $\iota_p$ we can consider $R$ as a subring of $\C_p$ and $\C$.

\begin{notation}\label{not-abelian-variety}
We let $\cA$ be the abelian variety with CM by $\sO_L$ defined over $\cR:=\Spec(R)$, with $R:=\overline{\ZZ}_{\frP}$, such that $\cA(\C)\isom \CC^\Sigma/\sO_L$. 
\end{notation}

To define the complex periods we use the period pairing. 
\[
	\langle\cdot, \cdot \rangle_\infty\colon H^1_{\dR}(\cA)\times H_1(\cA(\CC),\ZZ)\to \CC,
\]
where $H^1_{\dR}(\cA)$ is the relative de Rham cohomology.
The canonical isomorphism $\xi_\infty:\sO_L\isom H_1(\cA(\CC),\ZZ)$ gives  a distinguished element in $\xi_p(1)$ in $H_1(\cA(\CC),\ZZ)$. To  define elements in $H^1_{\dR}(\cA/\cR)$ we consider the action of $\sO_L$ on $\Lie(\cA)$. By our choice, $p$ does not divide $d_L$, so that  $R=\ol{\ZZ}_\frP$ contains $\sO_{L^{\Gal}}[1/d_L]$, where $\sO_{L^{\Gal}}$ is the ring of integers of the Galois closure of $L$. Thus  $\Lie(\cA)$ decomposes into eigenspaces corresponding to the different embeddings in $\Sigma$. We have
\begin{equation*}\label{eq:decomp-Lie}
	\Lie(\cA)=\bigoplus_{\sigma\in \Sigma} \Lie(\cA)(\sigma),
\end{equation*}
where $\Lie(\cA)(\sigma)$ is a rank one free $R$-module on which $\gamma\in \sO_L^\times$ acts by multiplication with $\sigma(\gamma)$. By passing to the dual, this decomposition induces a decomposition of the co-Lie algebra $\omega_{\cA}:=\Hom_{R}(\Lie(\cA),R)$ as $R[\sO_L^\times]$-module
\begin{equation*}\label{eq:decomp-coLie}
	\omega_{\cA}=\bigoplus_{\sigma\in \Sigma} \omega_{\cA}(-\sigma).
\end{equation*}
Note that $\gamma\in \sO_L^\times$ acts by $\sigma(\gamma)^{-1}$ on the contragredient representation $\omega_\cA(-\sigma)$ of $\Lie(\cA)(\sigma)$. 
For later use, we introduce the following notation concerning tensor symmetric powers.
\begin{notation}\label{not:Sym-alpha-def}
For $\alpha\in \NN^{\Sigma}$ we write
\begin{equation*}
    \TSym^{\alpha}(\omega_{\cA}):=\bigotimes_{\sigma\in \Sigma}\TSym^{\alpha(\sigma)}(\omega_{\cA}(-\sigma))
\end{equation*}
where $\TSym^{\alpha(\sigma)}(\omega_{\cA}(-\sigma))$ is the $\alpha(\sigma)$-th tensor symmetric power. 
\end{notation}
For each $\sigma\in \Sigma$, let us fix an $R$-basis $\omega(\cA)(\sigma)\in  \omega_{\cA}(-\sigma)$ of the free $R$-module $\omega_{\cA}(-\sigma)$. We write 
\begin{equation*}
    \omega(\cA):=(\omega(\cA)(\sigma))_\sigma
\end{equation*} 
for the basis of $\omega_{\cA}$. The Hodge filtration 
\[
	\omega_{\cA}\subseteq H^1_{\dR}(\cA)
\]
allows us to view each $\omega(\cA)(\sigma)$ as an element of $H^1_{\dR}(\cA)$.
\begin{definition}\label{def:periods}
With the  basis $\omega(\cA)$ of $\omega_{\cA}$ we define complex periods
\[
	\Omega(\sigma):=\langle \omega(\cA)(\sigma),\xi_\infty(1) \rangle_\infty\in \CC
\]
and we write $\Omega=(\Omega(\sigma))_{\sigma \in \Sigma}\in \CC^\Sigma$. For $\alpha\in \NN^\Sigma$ we write 
\begin{equation*}
    \Omega^\alpha:=\prod_{\sigma\in \Sigma} \Omega(\sigma)^{\alpha(\sigma)}. 
\end{equation*}
\end{definition}

We now define the $p$-adic periods. As $\cA(\C)\isom \C^{\Sigma}/\sO_L$, we have a canonical isomorphism
\begin{equation*}\label{eq:basis-xi-n}
	p^{-n}\cO_L/\cO_L= \cA[p^n](\CC)\cong \cA[p^n](\overline{\QQ}).
\end{equation*}
By passing to the limit, we obtain a $\sO_L\otimes \Zp$-basis
$	\xi_p\in T_p\cA$ of $T_p\cA$. Let us denote by $\zeta_{p^n}\in \overline{\QQ}$ the unique primitive $p^n$-th root of unity mapping to $\exp(2\pi i/p^n)$ under the fixed embedding $\overline{\QQ}\hookrightarrow \CC$. This compatible system of $p$-power roots of unities gives an isomorphism 
\begin{equation*}\label{eq:Tate-generator}
	\Zp\xrightarrow{\sim} \Zp(1), \quad 1\mapsto (\zeta_{p^n})_n.
\end{equation*}
The bases $\xi_p$ and $(\zeta_{p^n})_n$ induce a canonical $\sO_L\otimes \Zp$-basis $\xi_p^\vee \in T_p\cA^\vee$ of $T_p\cA^\vee$ under the isomorphism $T_p \cA^\vee\cong \Hom(T_p\cA,\Zp(1))$. Let us write \begin{equation}\label{eq:Lie-basis}
    \LieA=(\LieA(\sigma))_{\sigma\in \Sigma}
\end{equation} 
for the $R$-basis of $\Lie(\cA)=\bigoplus_{\sigma\in \Sigma} \Lie(\cA)(\sigma)$ dual to the fixed basis $\omega(\cA)=(\omega(\cA)(\sigma))_\sigma$.
The $p$-adic period pairing 
\begin{equation*}
    \langle-,-\rangle_p\colon \Lie(\cA)\times T_p\cA^\vee \to \OCp
\end{equation*}
allows to define the $p$-adic periods. 

\begin{definition}\label{def:p-adic-periods}
	The $p$-adic period $\Omega_p:=(\Omega_{p}(\sigma))_{\sigma\in \Sigma}\in \CC_p^\Sigma$ with respect to the basis $\xi_p^\vee$ of $T_p\cA^\vee$ and $\LieA$ of $\Lie(\cA)$ is defined as
	\[
		\Omega_{p}(\sigma):=\langle \LieA(\sigma),\xi_p^\vee \rangle_p.
	\]
	For $\alpha\in \NN^\Sigma$, we will write $\Omega_p^{\alpha}:=\prod_{\sigma\in \Sigma} \Omega_{p}(\sigma)^{\alpha(\sigma)}$.
\end{definition}

\section{Serre twists and decompositions}
Let $\cA/\cR$ be the abelian variety with CM by $\sO_L$ and CM type $\Sigma$ defined in Notation \ref{not-abelian-variety}
and $\Gamma\subset \sO_L^{\times}$ a subgroup of finite index. Note that $\Gamma$ acts on $\cA/\cR$. 
For any fractional ideal $\fra$ of $L$, we define the Serre twist $\cA(\fra)$ of $\cA$ by 
\begin{equation*}
    \cA(\fra):=\fra\otimes \cA.
\end{equation*}
\begin{notation}
In this section we let $\cB/\cR$ denote a Serre twist of $\cA$ by some fractional ideal $\fra$, which is \emph{coprime} to  $p$.
\end{notation}
The Serre twist satisfies $\cA(\fra)(\frb)\isom \cA(\fra\frb)$ and is
is functorial in $\fra$. In particular, for an integral ideal $\frc\subset \sO_L$, which is coprime to $p$,  we denote by 
\begin{equation*}
    [\frc]:\cB\to \cB(\frc^{-1})
\end{equation*}
the resulting \'etale (as $\cR=\Spec \ol{\Z}_\frP$) isogeny of degree $N\frc$ and denote by $\cB[\frc]:=\ker[\frc]$ the $\frc$-torsion points.
Let $\omega_{\cB}$ be the dual of $\Lie(\cB)$ and 
\begin{equation*}
    \omega_{\cB}^{d}:=\Lambda^{d}\omega_{\cB}.
\end{equation*}
Notice that $\omega_{\cB}\isom e^{*}\Omega^{1}_{\cB}$, where $e$ is the unit section of $\cB$.
By our assumption on $\cB$, there exists a natural $\sO_L$-linear \'etale quasi-isogeny connecting $\cA$ and $\cB$. More explicitly, given a fractional ideal $\fra$, we can write $\fra$ uniquely as $\fra=\fra_0\fra_1^{-1}$ with $\fra_0,\fra_1\subseteq \sO_L$ co-prime, and get a natural quasi-isogeny
\[
	\cB=\cA(\fra)\leftarrow \cA(\fra_0)\rightarrow \cA. 
\]
In particular, the
decomposition in \eqref{eq:decomp-coLie} induces a decomposition
\begin{equation*}
    \omega_{\cB}\isom \bigoplus_{\sigma\in \Sigma}\omega_{\cB}(-\sigma),
\end{equation*}
and the basis $\omega(\cA)(\sigma)$ induces by pullback a basis $\omega(\cB)(\sigma)$ of $\omega_{\cB}(-\sigma)$.
Recall that $\gamma\in \Gamma$ acts by multiplication with $\sigma(\gamma)^{-1}$ on $\omega_{\cB}(-\sigma)$, and that we have introduced for $\alpha\in \NN^{\Sigma}$ the notation  
\begin{equation*}\label{eq:Sym-alpha-def}
    \TSym^{\alpha}(\omega_{\cB}):=\bigotimes_{\sigma\in \Sigma}\TSym^{\alpha(\sigma)}(\omega_{\cB}(-\sigma))
\end{equation*}
where $\TSym^{\alpha(\sigma)}(\omega_{\cB}(-\sigma))$ is the $\alpha(\sigma)$-th tensor symmetric power. With this convention, one gets 
\begin{equation*}\label{eq:Sym-a-decomposition}
    \TSym^{a}(\omega_{\cB})\isom \bigoplus_{|\alpha|=a}\TSym^{\alpha}(\omega_{\cB}).
\end{equation*}
We use also the notation $\Sym^{\alpha}(\Lie(\cB))$ which is defined in the same way. 
For later use, we define a basis of $\TSym^{\alpha}(\omega_{\cB}(-\sigma))$
by
\begin{equation}\label{eq:m-alpha}
    \omega(\cB)^{[\alpha]}:=\bigotimes_{\sigma\in \Sigma}\omega(\cB)(\sigma)^{\otimes\alpha(\sigma)}
\end{equation}
where we used that 
$\omega(\cB)=\prod_{\sigma\in \Sigma}\omega(\cB)(\sigma)$. 
The following technical proposition will play a crucial role in the evaluation of the Eisenstein-Kronecker classes.
\begin{proposition}\label{prop:cap-product}Consider $\cA/\cR$.
Let $M$ be a  $R[\Gamma]$-module, such that $\Gamma$ acts trivially on $M\otimes\omega_{\cA}^{d}$. Then for any choice of ordering of $\Sigma$ there is a homomorphism
\begin{equation*}
    H^{d-1}(\Gamma, M\otimes\omega_{\cA}^{d})\to M\otimes \TSym^{\ul{1}}(\omega_{\cA}),
\end{equation*}
which is independent of this choice.
\end{proposition}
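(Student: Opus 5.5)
The homomorphism will be built as a composition: a splitting of $\omega_\cA^d$ into $\Gamma$-eigenlines, an identification of $M\otimes\omega_\cA^d$ with a tensor product of twisted modules, and a cap product with a fundamental class of a torus.

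\textbf{Step 1 (splitting the top form).} By the decomposition \eqref{eq:decomp-coLie}, $\omega_\cA=\bigoplus_{\sigma\in\Sigma}\omega_\cA(-\sigma)$ with each summand free of rank one. Fix an ordering $\sigma_1<\dots<\sigma_d$ of $\Sigma$. It gives an isomorphism
\[
\omega_\cA^d=\Lambda^d\omega_\cA\cong \bigotimes_{i}\omega_\cA(-\sigma_i)=\TSym^{\ul{1}}(\omega_\cA),\qquad \omega(\sigma_1)\wedge\dots\wedge\omega(\sigma_d)\mapsto \omega(\sigma_1)\otimes\dots\otimes\omega(\sigma_d).
\]
Changing the ordering by a permutation $\pi$ multiplies this isomorphism by $\mathrm{sgn}(\pi)$. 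The isomorphism is $\Gamma$-equivariant, with $\Gamma$ acting by $\prod_\sigma\sigma(\gamma)^{-1}$ on both sides.

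\textbf{Step 2 (cap product with a fundamental class).} Since $\Gamma$ has finite index in $\sO_L^\times$, Dirichlet's unit theorem gives that $\Gamma$ modulo torsion is free abelian of rank $d-1$. We may shrink $\Gamma$ to be torsion free, or invert the torsion order, which is allowed because $p$ is unramified. The logarithm map
\[
\gamma\mapsto(\log|\sigma(\gamma)|)_{\sigma\in\Sigma}
\]
embeds $\Gamma$ as a lattice in the trace-zero hyperplane $H\subset\RR^\Sigma$. Thus $\Gamma$ is a Poincaré duality group of dimension $d-1$.

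Choose a generator $\eta_\Sigma\in H_{d-1}(\Gamma,\ZZ)\cong\ZZ$. The cap product then gives
\[
H^{d-1}(\Gamma,N)\xrightarrow{\cap\eta_\Sigma}H_0(\Gamma,N)=N_\Gamma
\]
for any $\Gamma$-module $N$. Take $N=M\otimes\omega_\cA^d$. By hypothesis $\Gamma$ acts trivially on $N$, so $N_\Gamma=N$. Composing with Step 1 yields
\[
H^{d-1}(\Gamma,M\otimes\omega_\cA^d)\to M\otimes\omega_\cA^d\cong M\otimes\TSym^{\ul{1}}(\omega_\cA).
\]

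\textbf{Step 3 (choice of the fundamental class).} The ordering of $\Sigma$ orients $\RR^\Sigma$. Together with the normal vector $\ul{1}$, this orients $H$, hence orients the torus $H/\Gamma$, and so determines $\eta_\Sigma$. Reordering by $\pi$ multiplies $\eta_\Sigma$ by $\mathrm{sgn}(\pi)$.

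\textbf{Step 4 (independence of the ordering).} Under a reordering by $\pi$, the cap product changes by $\mathrm{sgn}(\pi)$ (Step 3) and the isomorphism of Step 1 changes by $\mathrm{sgn}(\pi)$. The two signs cancel, so the composite does not depend on the ordering.

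The main obstacle is making the two sign conventions compatible. I will define both the orientation of $H$ and the wedge-to-tensor isomorphism directly from the same ordered basis indexed by $\Sigma$. Then the sign cancellation in Step 4 is a single determinant computation.
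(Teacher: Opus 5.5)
Your Steps 1, 3 and 4 match the paper. The ordering of $\Sigma$ fixes both the orientation and the isomorphism $\omega_\cA^d\cong\TSym^{\ul{1}}(\omega_\cA)$, and the two factors $\mathrm{sgn}(\pi)$ cancel; the paper leaves this cancellation implicit. The gap is Step 2, which is exactly where the paper's proof does its work: torsion in $\Gamma$.

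The statement covers an arbitrary finite-index $\Gamma\subseteq\sO_L^\times$, for instance $\Gamma=\sO_L^\times\ni -1$. The map $\gamma\mapsto(\log|\sigma(\gamma)|)_\sigma$ has kernel $\mu_\Gamma$, so it embeds $\Gamma/\mu_\Gamma$, not $\Gamma$. Hence $\Gamma$ is not a Poincar\'e duality group and $H_{d-1}(\Gamma,\ZZ)\not\cong\ZZ$; already for $d=2$ one has $H_1(\Gamma,\ZZ)=\Gamma\cong\mu_\Gamma\times\ZZ$. Shrinking $\Gamma$ is not a reduction, because the map must be defined on $H^{d-1}(\Gamma,-)$. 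The naive candidate $\eta\mapsto\xi'\cap\res(\eta)$, with $\xi'$ the fundamental class of a torsion-free $\Gamma'$, depends on $\Gamma'$: passing to $\Gamma''\subseteq\Gamma'$ multiplies it by $[\Gamma':\Gamma'']$.

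The missing idea is the paper's descent. Lift $\xi'$ along the homology transfer $\res\colon H_{d-1}(\Gamma,R)\to H_{d-1}(\Gamma',R)$ to some $\xi$, and use $\xi\cap-$. The projection formula $\cor(\res(\xi)\cap\res(\eta))=[\Gamma:\Gamma']\,\xi\cap\eta$ gives $[\Gamma:\Gamma']\,(\xi\cap\eta)=\xi'\cap\res(\eta)$. Since $R$ is torsion free, $\xi\cap-$ does not depend on the lift. This is precisely the normalization behind $\eval_{\rho\otimes\Xi,\Gamma'}(\res(\eta))=[\Gamma:\Gamma']\,\eval_{\rho\otimes\Xi,\Gamma}(\eta)$ in Proposition \ref{prop:evaluation-map}, which matches Proposition \ref{prop:eis-and-finite-index}.

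Your other option is to invert $|\mu_\Gamma|$. The justification ``$p$ unramified'' works only for odd $p$, since then $\zeta_p\notin L$. For $p=2$ one always has $-1\in\mu_L$ and $2\notin R^\times$. The paper's surjectivity of the transfer tacitly needs $|\mu_\Gamma|\in R^\times$ as well; this holds for $\Gamma=\sO_\frf^\times$ with $\frf\neq(1)$ prime to $p$.

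Where inverting is allowed, you do get a homomorphism independent of the ordering, but it is not the paper's. The generator of $H_{d-1}(\Gamma,R)\cong H_{d-1}(\Gamma/\mu_\Gamma,R)$ coming from the oriented torus $H/(\Gamma/\mu_\Gamma)$ equals $|\mu_\Gamma|\,\xi$, so your map is $|\mu_\Gamma|$ times the paper's. Its restriction compatibility then carries the factor $[\Gamma/\mu_\Gamma:\Gamma'/\mu_{\Gamma'}]$ instead of $[\Gamma:\Gamma']$. That breaks the passage to a smaller, torsion-free subgroup in the proof of Theorem \ref{thm:evaluation-formula}.

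So your construction settles the bare existence-and-independence claim (when $p\nmid|\mu_\Gamma|$), and it agrees with the paper's map only for torsion-free $\Gamma$. In general you need the lifting step.
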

\begin{proof}The proof is similar to  the one of \cite{Kings-Sprang} Proposition 2.27. Note that $ H^{d-1}(\Gamma, M\otimes\omega_{\cA}^{d})= H^{d-1}(\Gamma, R)\otimes M\otimes\omega_{\cA}^{d}$.
Let $\Gamma'\subset \Gamma$ be a free subgroup of finite index. We identify
\begin{equation*}
    H_{d-1}(\Gamma',R)\isom H_{d-1}(\R_{>0}^{\Sigma}/\Gamma',R)
\end{equation*}
and the ordering of $\Sigma$ together with the standard orientation of $\R$ gives an orientation of $\R_{>0}^{\Sigma}/\Gamma'$ and hence a fundamental class $\xi'\in H_{d-1}(\Gamma',R)$. The same ordering gives an isomorphism
$\omega_{\cA}^{d}\isom\TSym^{\ul{1}}(\omega_{\cA})$ of $\Gamma$-modules.
As in the proof of \cite{Kings-Sprang} Proposition 2.27. one shows that 
\begin{equation*}
    H_{d-1}(\Gamma,R)\xrightarrow{\res} H_{d-1}(\Gamma',R)
\end{equation*}
is surjective, so that there is a $\xi\in H_{d-1}(\Gamma,R)$ with $\res(\xi)=\xi'$. Using this $\xi$ one has a commutative diagram
\begin{equation*}
    \xymatrix{
H^{d-1}(\Gamma,R)\otimes M\otimes \omega^{d}_{\cA}\ar[d]_\res\ar[r]^{\xi\cap- }&H_0(\Gamma,M\otimes \TSym^{\ul{1}}(\omega_{\cA}))\ar[d]^{[\Gamma:\Gamma']}\\
 H^{d-1}(\Gamma',R)\otimes M\otimes \omega^{d}_{\cA}\ar[r]^{\res(\xi)\cap-}&H_0(\Gamma',M\otimes \TSym^{\ul{1}}(\omega_{\cA}))}
\end{equation*}
because for any 
$\eta\in H^{d-1}(\Gamma,R)\otimes M\otimes \omega^{d}_{\cA}$, the projection formula gives
\begin{equation*}
    \cor(\res(\xi)\cap\res(\eta))=\cor\circ\res(\xi)\cap \eta=[\Gamma:\Gamma']\xi\cap \eta.
\end{equation*}
As $R$ is torsion free, multiplication by $[\Gamma:\Gamma']$ is injective, which shows that the map $\xi\cap-$ is independent of the choice of $\xi$ and the ordering of $\Sigma$.
\end{proof}

\section{Eisenstein-Kronecker classes and the class $\vartheta_{\frf,\frc}(\frb)$}\label{section:EK-classes}
Let us fix $p$ as before and ideals $\frf,\frc,\frb$ of $\sO_L$ not divisible by $p$ and pairwise coprime.  We  write $\cB:=\cA(\frf\frb^{-1})$ for the Serre twist of $\cA$ by $\frf\frb^{-1}$. We denote by $\cB[\frc]$ the kernel of the isogeny $[\frc]:\cB\to \cB(\frc^{-1})$ and we let 
\begin{equation*}
    \Gamma:=\sO_\frf^{\times}
\end{equation*}
be the group of units, that are congruent to $1$ modulo $\frf$.
The uniformization $\cA(\C)\isom \C^{\Sigma}/\sO_L$ induces  uniformizations $\cA(\frf)(\C)\isom  \C^{\Sigma}/\frf$ and $\cB(\C)\isom \C^{\Sigma}/\frf\frb^{-1}$. With this we get (using $\iota_\infty:\ol{\QQ}\to \C$)
\begin{align}\label{eq:not:x-Omega}
    \sO_L/\frf\isom\cA(\frf)[\frf](\cR)&&\mbox{ and }&&\frc^{-1}\frf\frb^{-1}/\frf\frb^{-1}\isom \cB[\frc](\cR)
\end{align}
which we consider as an identification. Thus $1\in \sO_L/\frf$ is a $[\frf]$-torsion point in $\cA[\frf](\C)$.

We now review the Eisenstein-Kronecker classes constructed in \cite{Kings-Sprang}.
\begin{definition}\label{def:Da}For any fractional ideal $\Lambda$ of $L$ and ideal $\frN\subset \sO_L$ let 
\begin{equation*}
    C(\frN^{-1}\Lambda/\Lambda,R)
\end{equation*}
be the $R$-valued functions $f$ on  $\frN^{-1}\Lambda/\Lambda$ with the $\Gamma$-action $\gamma f(\lambda)=f(\gamma^{-1}\lambda)$.
We define  $f_{[\frc]}\in H^{0}(\cB[\frc],\cO_{\cB[\frc]})\isom C(\frc^{-1}\frf\frb^{-1}/\frf\frb^{-1},R)$ by 
\begin{equation*}\label{eq:f-def}
    f_{[\frc]}(t):=\begin{cases}
-1&t\neq 0\\
N\frc-1&t=0.
\end{cases}
\end{equation*}
where  
$N\frc$ is the norm of $\frc$.
\end{definition}
One has
\begin{equation*}
    f_{[\frc]}\in \ker(H^{0}(\cB[\frc],\cO_{\cB[\frc]})\to H^{0}(\cR,\cO_\cR))^{\Gamma},
\end{equation*}
where $H^{0}(\cB[\frc],\cO_{\cB[\frc]})\to H^{0}(\cR,\cO_\cR)$ is the trace map.
The subscheme  $\cB[\frc]\subset \cB$ is  non-empty, closed and finite \'etale over $\cR$ and stable under the action of $\Gamma$. In \cite[Corollary 2.33]{Kings-Sprang} it is shown that
\begin{equation}\label{eq:H0-inclusion}
    H^{0}(\cB[\frc],\cO_{\cB[\frc]})^{\Gamma}\subset H^{d}_{\cB[\frc]}(\cB,\Gamma;\Omega^{d}_{\cB}),
\end{equation}
where the right hand side denotes $\Gamma$-equivariant sheaf cohomology with support in $\cB[\frc]$. (We refer to \cite{Kings-Sprang} Appendix A for more details on $\Gamma$-equivariant cohomology). Let $\cU_\frc:=\cB\setminus \cB[\frc]$, then there is a localization sequence
\begin{equation*}
    \ldots\to H^{d-1}(\cU_{\frc},\Gamma;\Omega^{d}_{\cB})\xrightarrow{\res} H^{d}_{\cB[\frc]}(\cB,\Gamma;\Omega^{d}_{\cB})\to H^{d}(\cB,\Gamma;\Omega^{d}_{\cB})\to \ldots
\end{equation*}
In this set-up, one can construct the Eisenstein-Kronecker class following \cite{Kings-Sprang}. 
\begin{theorem}[\cite{Kings-Sprang} Theorem 2.20]There is a unique class 
\begin{equation*}
    {_{\frc}}\EK_{\Gamma,\cB}\in H^{d-1}(\cU_\frc,\Gamma;\Omega^{d}_{\cB})
\end{equation*}
such that $\res\circ {_{\frc}}\EK_{\Gamma,\cB}=f_{[\frc]}$ and which lifts to the completion of the Poincar\'e bundle. Here we consider 
\begin{equation*}
    f_{[\frc]}\in H^{0}(\cB[\frc],\cO_{\cB[\frc]})^{\Gamma}\subset H^{d}_{\cB[\frc]}(\cB,\Gamma;\Omega^{d}_{\cB}),
\end{equation*} 
using \eqref{eq:H0-inclusion}.
The class $ {_{\frc}}\EK_{\Gamma,\cB}$ is called the \emph{equivariant Eisenstein-Kronecker class}.
\end{theorem}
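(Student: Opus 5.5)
The statement is quoted from \cite{Kings-Sprang}, Theorem 2.20. I would follow the strategy there: first prove uniqueness from the localization sequence, then prove existence by producing a class with the correct residue that also satisfies the lifting condition.

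\emph{Uniqueness.} Consider the localization sequence
\[
H^{d-1}(\cB,\Gamma;\Omega^d_\cB)\to H^{d-1}(\cU_\frc,\Gamma;\Omega^d_\cB)\xrightarrow{\res} H^d_{\cB[\frc]}(\cB,\Gamma;\Omega^d_\cB)\to H^d(\cB,\Gamma;\Omega^d_\cB).
\]
Two classes with the same residue differ by the image of some class $\eta\in H^{d-1}(\cB,\Gamma;\Omega^d_\cB)$. The condition ``lifts to the completion of the Poincar\'e bundle'' is what should force $\eta=0$. I would argue this with the splitting principle: under the multiplication maps $[N]$ with $N\equiv 1$ modulo $\frf\frc$, a class that lifts to the completed Poincar\'e bundle lies in the eigenspace where $[N]_*$ acts trivially. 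On the other hand, $H^{d-1}(\cB,\Gamma;\Omega^d_\cB)$ is computed by the Hochschild--Serre spectral sequence with terms $H^a(\Gamma,H^b(\cB,\Omega^d_\cB))$. On $H^b(\cB,\Omega^d_\cB)=\Lambda^b H^1(\cO)\otimes\omega^d$, the map $[N]_*$ acts by $N^{\,d-b}$, which is nontrivial for $b<d$. Here $b\le d-1$, since the total degree is $d-1$. This kills $\eta$.

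\emph{Existence.} I would show that $f_{[\frc]}$ maps to zero in $H^d(\cB,\Gamma;\Omega^d_\cB)$, so that it lifts to some class on $\cU_\frc$.
\begin{enumerate}
\item Use the trace-zero property of $f_{[\frc]}$ together with the fact that the class of a torsion point in $H^d(\cB,\Omega^d)$ depends only on its degree. This reduces the question to the vanishing of the pushforward $\sum_t f_{[\frc]}(t)[t]$, which has degree zero.
\item Handle the nonequivariant vanishing via the same $[N]$-eigenspace decomposition.
\item Pass to the equivariant setting through the Hochschild--Serre filtration.
\end{enumerate}
Finally, I would project the chosen lift onto the $[N]_*$-invariant part. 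This keeps the residue unchanged, because $f_{[\frc]}$ is $[N]$-invariant for such $N$. The projected class is the one that lifts to the Poincar\'e bundle; this last step is done through the logarithm sheaf formalism of \cite{Kings-Sprang}.

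\emph{Main obstacle.} The hardest step is making the $[N]$-eigenspace argument work equivariantly and integrally over $R$. The idempotent projectors involve denominators of the form $N^k-1$, so I would choose several values of $N$ whose gcds are prime to $p$. I would also need to verify compatibility with the Poincar\'e bundle completion. If this integral argument fails, my fallback is to cite \cite{Kings-Sprang}, Theorem 2.20, directly.
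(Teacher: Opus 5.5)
Your proposal has a genuine gap. The uniqueness argument rests on the claim that a class in $H^{d-1}(\cU_\frc,\Gamma;\Omega^d_\cB)$ which merely lifts to $\wP\otimes\Omega^d_\cB$ must lie in the eigenspace where $[N]_*=1$. You give no argument for this, and read literally it is false.

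Take $d=1$ with $\Gamma$ trivial, that is, $L$ imaginary quadratic and $\frf$ with $\sO_\frf^\times=\{1\}$. Then $\cU_\frc$ is affine, being the complement of the relatively ample divisor $\cB[\frc]$. Hence the transition maps of the pro-system of finite-level Poincar\'e bundles are surjective on sections over $\cU_\frc$, and \emph{every} class in $H^0(\cU_\frc,\Omega^1_\cB)$ lifts to $\wP\otimes\Omega^1_\cB$. In particular, for an invariant differential $\omega$, both $\eta$ and $\eta+\omega$ lift and have the same residue, while $[N]_*\omega=N\omega$. So mere liftability neither kills the ambiguity coming from $H^{d-1}(\cB,\Gamma;\Omega^d_\cB)$ nor forces trace invariance.

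The same problem appears in your existence step. It ends with the unproved assertion that the trace-invariant projection ``is the one that lifts'', which is precisely the content you would need to establish.

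The missing idea is to work with $\wP$-valued classes from the start, which is what the paper does. It cites \cite{Kings-Sprang}, Theorem 2.20, for a class in $H^{d-1}(\cU_\frc,\Gamma;\wP\otimes\Omega^d_\cB)$ that is uniquely determined by the condition that its residue, taken with coefficients in $\wP$, is $f_{[\frc]}$. It then defines ${_\frc}\EK_{\Gamma,\cB}$ as the image of this class under the augmentation $\wP\to\sO_\cB$. The $\wP$-valued residue condition is much stronger than ``residue $f_{[\frc]}$ plus existence of some lift'', since it prescribes all the higher-order components. The uniqueness in the statement has to be read at this level.

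What makes it work is that the cohomology of $\cB$ with coefficients in $\wP\otimes\Omega^d_\cB$ vanishes below degree $d$. By Hochschild--Serre this gives $H^{d-1}(\cB,\Gamma;\wP\otimes\Omega^d_\cB)=0$, so the residue map is injective. It also reduces $H^d(\cB,\Gamma;\wP\otimes\Omega^d_\cB)$ to $H^0(\Gamma,H^d(\cB,\wP\otimes\Omega^d_\cB))$, where the image of $f_{[\frc]}$ is its trace, which is zero. This argument is integral over $R$ and involves no denominators.

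Your $[N]$-eigenspace projection, by contrast, needs $N^k-1$ to be a unit in $R$ for all $1\le k\le d$. For $N$ prime to $p$ this fails for every such $N$ once $d\ge p-1$, by Fermat, and always when $p=2$. So choosing several $N$ with gcd prime to $p$ cannot restore the integrality that the later $p$-adic construction requires. Your fallback of citing \cite{Kings-Sprang} is what the paper does, but only in this $\wP$-valued form followed by augmentation.
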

\begin{remark}
This class is denoted by $\EK_{\Gamma,\cB}(f_{[\frc]})$ in \cite{Kings-Sprang}. As we use only $f_{[\frc]}$ in this paper, we have slightly changed the notation.
\end{remark}
\begin{proof}
Theorem 2.20 in \cite{Kings-Sprang} with $f=f_{[\frc]}$ gives a class 
in $H^{d-1}(\cU_\frc,\Gamma;\wP\otimes\Omega^{d}_{\cB})$, which is uniquely determined by its residue being $f_{[\frc]}$ with values in the completion of the Poincar\'e bundle $\wP$. The class here is the image  under the augmentation map $\wP\to \sO_{\cB}$.
\end{proof}

As in \cite{Kings-Sprang} we use differentiation to produce 
further classes out of $ {_{\frc}}\EK_{\Gamma,\cB}$. Let $\alpha\in \NN^\Sigma$ and write $a:=|\alpha|:=\sum_{\sigma}\alpha(\sigma)$. Using the differential $d:\sO_{\cB}\to \Omega^{1}_{\cB}$ iteratively $a$-times and projecting to the $\TSym^{\alpha}(\Omega^{1}_\cB)$ component gives  a map of sheaves of $R$-modules
\begin{equation*}
    d^{\alpha}:\sO_{\cB}\to \TSym^{\alpha}(\Omega^{1}_{\cB})
\end{equation*}
and hence a class
\begin{equation*}
    d^{\alpha}{_{\frc}}\EK_{\Gamma,\cB}\in H^{d-1}(\cU_\frc,\Gamma;\TSym^{\alpha}(\Omega^{1}_{\cB})\otimes\Omega^{d}_{\cB}).
\end{equation*}
We now define a map $\tau_{n}:\cA[p^{n}]\to \cU_\frc$
and we will pull-back the 
class $d^{\alpha}{_{\frc}}\EK_{\Gamma,\cB}$ by this map. 
The isogeny 
$[\frb]:\cA(\frf)\to \cB:=\cA(\frf\frb^{-1})$ has degree $N\frb$, which is coprime to $\frf$ and hence $[\frb](1)\in \cB(\cR)$ is also a primitive $\frf$-torsion point,  where $1\in \cA(\frf)[\frf](\cR)$ (cf. \eqref{eq:not:x-Omega}). Explicitly, the point $[\frb](1)$ corresponds to $1\in \frb^{-1}/\frf\frb^{-1} \cong \cB[\frf](\cR)$.
The isogeny $[\frf]:\cA(\frf)\to \cA$ induces an isomorphism $\cA(\frf)[p^{n}]\isom \cA[p^{n}]$ as $\frf$ is coprime to $p$. Let $\tau_{n}$ be the composition 
\begin{equation*}
    \tau_{n}:\cA[p^{n}]\isom \cA(\frf)[p^{n}]\xrightarrow{[\frb]}\cB[p^{n}]\xrightarrow{T_{[\frb](1)}}\cB\setminus\cB[\frc]=\cU_\frc,
\end{equation*}
where $T_{[\frb](1)}$ is the translation by $[\frb](1)$. Then $\tau_{n}$ is $\Gamma$-equivariant. The pull-back of
$d^{\alpha}{_{\frc}}\EK_{\Gamma,\cB}$ by 
$\tau_{n}$ is a cohomology class in 
\begin{align*}
    H^{d-1}(\cA[p^{n}], \Gamma;\tau_{n}^{*}(\TSym^{\alpha}(\Omega^{1}_{\cB})\otimes\Omega^{d}_{\cB}))&\isom 
H^{d-1}(\Gamma,H^{0}(\cA[p^{n}], \TSym^{\alpha}(\Omega_\cA)\otimes\Omega^{d}_{\cA}))
\\&\isom H^{d-1}(\Gamma,\cO(\cA[p^{n}])\otimes\TSym^{\alpha}(\omega_\cA)\otimes \omega^{d}_{\cA}),
\end{align*}
where the first isomorphism comes from the fact that $\cA[p^{n}]$ is affine. 
Using further the identification 
$R\isom \TSym^{\ul{1}}(\omega_{\cA})\otimes \Sym^{\ul{1}}(\Lie(\cA))$, one can rewrite this as
\begin{equation*}
H^{d-1}(\Gamma,\cO(\cA[p^{n}])\otimes\TSym^{\alpha+\ul{1}}(\omega_\cA)\otimes\Sym^{\ul{1}}(\Lie(\cA))\otimes \omega^{d}_{\cA}).
\end{equation*}
From the construction it is clear that
\begin{equation*}
    \tau_{n+1}^{*}(d^{\alpha}{_{\frc}}\EK_{\Gamma,\cB})\mid_{\cA[p^{n}]}=\tau_{n}^{*}(d^{\alpha}{_{\frc}}\EK_{\Gamma,\cB})
\end{equation*}
and we can define:
\begin{definition}\label{def:vartheta}Let $n$ be a positive integer and $\alpha\ge \ul{1}$. Then we write
\begin{equation*}
    \vartheta_{\frf,\frc,n}^{\alpha}(\frb):=\tau_{n}^{*}(d^{\alpha-\ul{1}}{_{\frc}}\EK_{\Gamma,\cB})\in H^{d-1}(\Gamma,\cO(\cA[p^{n}])\otimes\TSym^{\alpha}(\omega_\cA)\otimes\Sym^{\ul{1}}(\Lie(\cA))\otimes \omega^{d}_{\cA}))
\end{equation*}
(notice the shift in $\alpha$). If $\alpha=\ul{1}$ we write
\begin{equation*}
    \vartheta_{\frf,\frc,n}(\frb)=\vartheta^{\ul{1}}_{\frf,\frc,n}(\frb).
\end{equation*}
\end{definition}

\section{$\vartheta_{\frf,\frc,n}(\frb)$ and $L$-values}\label{sec:vartheta}
We keep the notations from section \ref{section:EK-classes}.
We want to evaluate $ \vartheta^{\alpha}_{\frf,\frc,n}(\frb)$ on certain functions on $\cB[p^{n}](R)$ with values in $R$. Before we do this,
we consider the evaluation process in the more general setting for an abelian variety $\cB$ such that $\cB(\C)\isom \C^{\Sigma}/\Lambda$ and where $\Lambda$ is a fractional ideal of $L$.
Then one has a $\Gamma$-equivariant pairing 
\begin{align*}
    C(p^{-n}\Lambda/\Lambda,R)\times \cO(\cB[p^n])\to R&&(f,g)\mapsto \sum_{\lambda\in p^{-n}\Lambda/\Lambda}f(\lambda)\lambda^{*}g
\end{align*}
where 
$\lambda^*$ is the pullback along the torsion section $\lambda\in p^{-n}\Lambda/\Lambda\cong \cB[p^n](\ol{\QQ})$. 
\begin{proposition}\label{prop:evaluation-map}
For each $\Gamma$-invariant $\rho\otimes \Xi\in C(p^{-n}\Lambda/\Lambda,R)\otimes \Sym^{\alpha}(\Lie(\cB))$, there is an evaluation map 
\begin{equation*}
    \eval_{\rho\otimes\Xi,\Gamma}:H^{d-1}(\Gamma,\cO(\cB[p^n])\otimes\TSym^{\alpha}(\omega_\cB)\otimes\Sym^{\ul{1}}(\Lie(\cB))\otimes\omega^{d}_{\cB})\to R.
\end{equation*}
If $\Gamma'\subset \Gamma$ is a subgroup of finite index, one has
\begin{equation*}
    \eval_{\rho\otimes\Xi,\Gamma'}(\res(\eta))=[\Gamma:\Gamma']\eval_{\rho\otimes\Xi,\Gamma}(\eta).
\end{equation*}
\end{proposition}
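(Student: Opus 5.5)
The evaluation map will be built as a composite of three maps. The first is a coefficient pairing. The second is the cap-product map of Proposition \ref{prop:cap-product}, applied to a suitable $\Gamma$-module $M$. The third is a contraction to $R$. The compatibility with restriction will then come from the same projection formula argument used in that proof.

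\textbf{Coefficient pairing.} Write $\rho\otimes\Xi$ as a $\Gamma$-invariant element. There is a natural pairing
\[
C(p^{-n}\Lambda/\Lambda,R)\otimes\Sym^{\alpha}(\Lie(\cB))\;\times\;\cO(\cB[p^n])\otimes\TSym^{\alpha}(\omega_\cB)\to R .
\]
It is the product of the torsion-point pairing $(f,g)\mapsto\sum_\lambda f(\lambda)\lambda^*g$ with the duality pairing $\Sym^\alpha(\Lie\cB)\times\TSym^\alpha(\omega_\cB)\to R$. Both factors are $\Gamma$-equivariant: the first was noted above, and the second is the canonical duality. So pairing with the invariant element $\rho\otimes\Xi$ gives a $\Gamma$-equivariant map
\[
\cO(\cB[p^n])\otimes\TSym^{\alpha}(\omega_\cB)\otimes\Sym^{\ul{1}}(\Lie\cB)\otimes\omega^d_\cB\to \Sym^{\ul{1}}(\Lie\cB)\otimes\omega^d_\cB .
\]
Taking $H^{d-1}(\Gamma,-)$ gives a map into $H^{d-1}(\Gamma,M\otimes\omega^d_\cB)$ with $M:=\Sym^{\ul{1}}(\Lie\cB)$.

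\textbf{Cap product and contraction.} The group $\Gamma$ acts on $\Sym^{\ul 1}(\Lie\cB)$ through $\prod_{\sigma\in\Sigma}\sigma(\gamma)=N_{K/\QQ}$-type norm character, and on $\omega^d_\cB$ through its inverse. Hence $\Gamma$ acts trivially on $M\otimes\omega^d_\cB$. Proposition \ref{prop:cap-product} is stated for $\cA$, but its proof uses only the $\Gamma$-module structure of $\omega_\cA$, which is the same for $\cB$ via the quasi-isogeny. It therefore gives a canonical map
\[
H^{d-1}(\Gamma,M\otimes\omega^d_\cB)\to M\otimes\TSym^{\ul 1}(\omega_\cB)=\Sym^{\ul1}(\Lie\cB)\otimes\TSym^{\ul1}(\omega_\cB).
\]
Composing with the duality contraction to $R$ defines $\eval_{\rho\otimes\Xi,\Gamma}$.

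\textbf{Restriction formula.} The first step commutes with $\res$, since it is induced by a $\Gamma$-equivariant coefficient map and hence by a $\Gamma'$-equivariant one. The contraction is independent of the group. So it suffices to prove that the cap-product map satisfies $c_{\Gamma'}\circ\res=[\Gamma:\Gamma']\,c_\Gamma$. This follows from the commutative diagram in the proof of Proposition \ref{prop:cap-product}.

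The main point to check is the choice of fundamental class. Both $\Gamma$ and $\Gamma'$ need a class mapping to the fundamental class of a common free subgroup $\Gamma''\subset\Gamma'$. Pick $\xi\in H_{d-1}(\Gamma,R)$ with $\res^\Gamma_{\Gamma''}\xi=\xi''$, and take $\res^{\Gamma}_{\Gamma'}\xi$ as the class for $\Gamma'$. This is legitimate because the proof of Proposition \ref{prop:cap-product} shows the map is independent of the lift chosen. Then
\[
\cor^{\Gamma'}_{\Gamma}\bigl(\res\xi\cap\res\eta\bigr)=\xi\cap\cor\res\eta=[\Gamma:\Gamma']\,(\xi\cap\eta)
\]
on coinvariants. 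Since $\Gamma$ acts trivially on the target $H_0$, corestriction on $H_0$ is the identity. This yields the factor $[\Gamma:\Gamma']$.
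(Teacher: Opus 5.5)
Your proposal is correct and follows essentially the same route as the paper: the same composite (cup product with the invariant element $\rho\otimes\Xi$, then the cap-product map of Proposition \ref{prop:cap-product}, then contraction to $R$), and the same projection-formula argument for the index factor. The differences are minor. You make explicit why $\res\xi$ is an admissible fundamental-class lift for $\Gamma'$ (transitivity of transfer), which the paper uses tacitly. You also apply the projection formula in the form $\cor(\res\xi\cap\res\eta)=\xi\cap\cor\res\eta$, whereas the paper uses $(\cor\res\xi)\cap\eta$; both forms are valid.
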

\begin{proof}
The cup-product with $\rho\otimes\Xi\in(C(p^{-n}\Lambda/\Lambda,R)\otimes \Sym^{\alpha}(\Lie(\cB)))^{\Gamma}$ gives a map
\begin{equation*}
    H^{d-1}(\Gamma,\cO(\cB[p^n])\otimes\TSym^{\alpha}(\omega_\cB)\otimes\Sym^{\ul{1}}(\Lie(\cB))\otimes\omega^{d}_{\cB})\xrightarrow{\rho\otimes\Xi\cup} H^{d-1}(\Gamma,\Sym^{\ul{1}}(\Lie(\cB))\otimes\omega^{d}_{\cB}),
\end{equation*}
which composed with Proposition \ref{prop:cap-product} gives the desired evaluation map. Let $\xi\in H_{d-1}(\Gamma,R)$ be as  in the proof of \ref{prop:cap-product}. Then 
\begin{align*}
    \eval_{\rho\otimes\Xi,\Gamma'}(\res(\eta))&=\cor(\res(\xi)\cap(\res((\rho\otimes\Xi)\cup \res(\eta)))\\
&=(\cor\circ\res(\xi))\cap((\rho\otimes \Xi)\cup \eta) \\
&=[\Gamma:\Gamma']\eval_{\rho\otimes\Xi,\Gamma}(\eta)
\end{align*}
and the formula $ \eval_{\rho\otimes\Xi,\Gamma'}(\res(\eta))=[\Gamma:\Gamma']\eval_{\rho\otimes\Xi,\Gamma}(\eta)$ follows.
\end{proof}
To get an explicit formula for $\eval_{\rho\otimes\Xi,\Gamma}(\vartheta_{\frf,\frc,n}^{\alpha}(\frb))$ we will need the relation between the Eisenstein-Kronecker classes and Eisenstein series established in \cite{Kings-Sprang}. We consider the compactly supported functions $C_c(\QQ\otimes \Lambda/\Lambda,R)$. One has a canonical isomorphisms
\begin{multline*}\label{eq:C-Lambda}
	\varinjlim_\frN C(\frN^{-1}\Lambda/\Lambda,R)\cong C_c(\QQ\otimes\Lambda/\Lambda,R)\\
	\cong \{f:\Q\otimes \Lambda\to R\mid f \mbox{ is $\Lambda$-invariant and $f\colon \Q\otimes\Lambda/\Lambda\to R$ has finite support}\}.
\end{multline*}
We will simply write $f(\lambda)$ for the value of $C_c(\QQ\otimes\Lambda/\Lambda,R)$ seen as a $\Lambda$-invariant function on $\lambda\in \QQ\otimes \Lambda$. For $x\in \QQ\otimes\Lambda$, we will write \begin{equation*}
    \delta_x\in C_c(\QQ\otimes\Lambda/\Lambda,R)
\end{equation*} 
for the characteristic function of $x+\Lambda\subseteq \QQ\otimes \Lambda$.
\begin{definition}For each $f\in C_c(\QQ\otimes\Lambda/\Lambda,R)$ with $f(\gamma\lambda)=\gamma^{\alpha}f(\lambda)$ we define the Eisenstein series for $s\in \C$ with $\Re(s)>-\frac{a}{2}+d$ by
\begin{equation*}
    E^{0,\alpha}(f,s,\Lambda,\Gamma):=\dashsum_{\lambda\in \Gamma\backslash\Q\otimes\Lambda}\frac{f(\lambda)}{\lambda^{\alpha}N(\lambda)^{s}},
\end{equation*}
where  $\lambda$ runs over all non-zero $\Gamma$-cosets of $\Q\otimes\Lambda$. Here $\lambda^{\alpha}=\prod_{\sigma \in \Sigma}\sigma(\lambda)^{\alpha(\sigma)}$ and $N(\lambda)=\prod_{\sigma \in \Hom(L,\overline{\QQ})}\sigma(\lambda)$ is the usual norm. If $t\in \Q\otimes\Lambda$ is fixed by $\Gamma$ and $\gamma^{\alpha}=1$ for all $\gamma\in \Gamma$, i.e., $\Gamma\subset \ker\alpha$, we write 
\begin{equation*}
     E^{0,\alpha}(t,s,\Lambda,\Gamma):= E^{0,\alpha}(\delta_{t},s,\Lambda,\Gamma).
\end{equation*}
\end{definition}
This Eisenstein series satisfies the following formula with respect to the change of $\Gamma$:
\begin{proposition}\label{prop:eis-and-finite-index}
For a subgroup of finite index $\Gamma'\subset \Gamma$, one has 
\begin{equation*}
    E^{0,\alpha}(f,s,\Lambda,\Gamma')=[\Gamma:\Gamma']E^{0,\alpha}(f,s,\Lambda,\Gamma).
\end{equation*}
\end{proposition}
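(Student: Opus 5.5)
This is a direct coset-counting argument on the defining series; no input beyond the definition of $E^{0,\alpha}$ is needed. I work first in the region of absolute convergence $\Re(s)>-\frac{a}{2}+d$ and then extend by analytic continuation.

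\textbf{Step 1: the summand is $\Gamma$-invariant.} For $\gamma\in\Gamma$ and $\lambda\neq 0$, the function $f$ transforms by $f(\gamma\lambda)=\gamma^{\alpha}f(\lambda)$. We also have $(\gamma\lambda)^{\alpha}=\gamma^{\alpha}\lambda^{\alpha}$. Finally $N(\gamma\lambda)=N(\gamma)N(\lambda)$, and $N(\gamma)=1$: it is a totally positive unit norm, since $L$ is totally imaginary and $N$ is taken over all embeddings, which come in complex conjugate pairs. Hence the term $f(\lambda)\lambda^{-\alpha}N(\lambda)^{-s}$ depends only on the coset $\Gamma\lambda$, and so also only on $\Gamma'\lambda$. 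Consequently both series are well defined.

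\textbf{Step 2: fiber count.} Consider the projection
\[
\Gamma'\backslash(\QQ\otimes\Lambda\setminus\{0\})\to\Gamma\backslash(\QQ\otimes\Lambda\setminus\{0\}).
\]
The fiber over $\Gamma\lambda$ is in bijection with $\Gamma'\backslash\Gamma/\mathrm{Stab}_\Gamma(\lambda)$. The stabilizer of a nonzero $\lambda$ under multiplication in the field $L$ is trivial, because $\gamma\lambda=\lambda$ forces $\gamma=1$. So every fiber has exactly $[\Gamma:\Gamma']$ elements.

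\textbf{Step 3: resum.} By Step 1 all terms in a fiber are equal. Summing the $\Gamma'$-series fiber by fiber therefore gives $[\Gamma:\Gamma']$ times the $\Gamma$-series. Absolute convergence in the stated half-plane justifies this rearrangement.

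If the Eisenstein series is subsequently continued meromorphically in $s$, the identity persists by uniqueness of analytic continuation. There is no genuine obstacle in the argument. The only point needing care is checking that $N(\gamma)=1$ and that stabilizers are trivial, both of which hold because we are inside a field with $L$ totally imaginary.
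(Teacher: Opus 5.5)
Your proof is correct and is the same argument the paper gives: the summand $f(\lambda)\lambda^{-\alpha}N(\lambda)^{-s}$ is $\Gamma$-invariant, so each $\Gamma$-orbit splits into $[\Gamma:\Gamma']$ orbits of $\Gamma'$ that all contribute the same term. You also spell out what the paper leaves implicit, namely that $N(\gamma)=1$, that $\Gamma$ acts freely on $L^\times$, that the rearrangement is justified by absolute convergence, and that the identity persists under analytic continuation to $s=0$.
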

\begin{proof}
As $\frac{f(\lambda)}{\lambda^{\alpha}N(\lambda)^{s}}$ is $\Gamma$-invariant,  one has
\begin{equation*}
    \sum_{\gamma\in \Gamma/\Gamma'}\frac{f(\gamma\lambda)}{(\gamma\lambda)^{\alpha}N(\gamma\lambda)^{s}}=[\Gamma:\Gamma']\frac{f(\lambda)}{\lambda^{\alpha}N(\lambda)^{s}},
\end{equation*}
which implies the result.
\end{proof}
\begin{corollary}
Let $f\in  C_c(\QQ\otimes\Lambda/\Lambda,R)$ satisfy $f(\gamma^{-1}\lambda)=\gamma^{\alpha}f(\lambda)$ and $\Gamma'\subset \Gamma$ a subgroup of finite index contained in $\ker\alpha$ and fixing $f$. Then
\begin{equation*}
    E^{0,\alpha}(f,s,\Lambda,\Gamma)=\frac{1}{[\Gamma:\Gamma']}\sum_{t\in \Q\otimes\Lambda/\Lambda}f(t)E^{0,\alpha}(t,s,\Lambda,\Gamma').
\end{equation*}
\end{corollary}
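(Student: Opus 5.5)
The plan is to pass from $\Gamma$ to $\Gamma'$ using Proposition \ref{prop:eis-and-finite-index}, and then to decompose $f$ into characteristic functions of points. Since $\Gamma'\subseteq\Gamma$ has finite index, that proposition gives
\[
E^{0,\alpha}(f,s,\Lambda,\Gamma)=\frac{1}{[\Gamma:\Gamma']}E^{0,\alpha}(f,s,\Lambda,\Gamma').
\]
The $\Gamma$-equivariance hypothesis on $f$ is exactly what is needed for the summand $f(\lambda)\lambda^{-\alpha}N(\lambda)^{-s}$ to be $\Gamma$-invariant. Here $N(\gamma)=1$ for units, and the sign convention $f(\gamma^{-1}\lambda)=\gamma^\alpha f(\lambda)$ versus $f(\gamma\lambda)=\gamma^\alpha f(\lambda)$ should be checked against the definition. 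In any case the statement only requires $E^{0,\alpha}(f,s,\Lambda,\Gamma)$ to be well defined, and on $\Gamma'\subseteq\ker\alpha$ both conventions agree.

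Next I would expand $f$ as a finite sum $f=\sum_{t\in\QQ\otimes\Lambda/\Lambda}f(t)\,\delta_t$. This is possible because $f$ is compactly supported modulo $\Lambda$, so the sum is finite. The Eisenstein series is linear in $f$, because the defining series converges absolutely for $\Re(s)>-\frac a2+d$. Hence
\[
E^{0,\alpha}(f,s,\Lambda,\Gamma')=\sum_t f(t)\,E^{0,\alpha}(\delta_t,s,\Lambda,\Gamma').
\]

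The one delicate point is whether each term $E^{0,\alpha}(\delta_t,s,\Lambda,\Gamma')$ is defined, and whether it equals $E^{0,\alpha}(t,s,\Lambda,\Gamma')$ as in the definition. That notation requires $t$ to be fixed by $\Gamma'$ modulo $\Lambda$, and $\Gamma'\subseteq\ker\alpha$. The second condition is assumed. For the first, I would group the $t$ into $\Gamma'$-orbits. If $\Gamma'$ fixes $f$ but not every $t$ in the support, I would replace $\Gamma'$ by the finite-index subgroup $\Gamma''$ fixing the finitely many $t$ in the support; this subgroup exists since $\Gamma$ acts on the finite set $\frN^{-1}\Lambda/\Lambda$. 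Applying Proposition \ref{prop:eis-and-finite-index} once more relates $\Gamma''$ back to $\Gamma'$.

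I expect this step to be the main obstacle, because the index factors must come out consistently. Most likely the intended reading is that $\Gamma'$ fixes each $t$, which holds for instance for $\Gamma'=\sO_{\frN}^\times$ when $f$ is supported on $\frN^{-1}\Lambda/\Lambda$. Under that reading each $\delta_t$ is $\Gamma'$-invariant with $\gamma^\alpha=1$, so $E^{0,\alpha}(\delta_t,s,\Lambda,\Gamma')=E^{0,\alpha}(t,s,\Lambda,\Gamma')$ by definition. Combining the two displays then yields the formula.
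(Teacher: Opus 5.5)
Your proposal is correct and follows the paper's own argument. You pass from $\Gamma$ to $\Gamma'$ via Proposition \ref{prop:eis-and-finite-index}, then expand $f=\sum_t f(t)\delta_t$ and use linearity.

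Your fallback reading, that $\Gamma'$ fixes each $t$ in the support of $f$, is the intended one. Without it $E^{0,\alpha}(t,s,\Lambda,\Gamma')$ is not even defined. This matches the reduction in the proof of Theorem \ref{thm:evaluation-formula}, where the subgroup acts trivially on the relevant torsion. So the $\Gamma''$ detour is unnecessary, and as sketched it could not be related back to $\Gamma'$ anyway.
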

\begin{proof}
This follows from writing $f=\sum_{t\in \Q\otimes\Lambda/\Lambda}f(t)\delta_{t}$.
\end{proof}
\begin{remark}The Eisenstein series $E^{0,\alpha}(t',s,\Lambda,\Gamma)$ defined in \cite{Kings-Sprang} 3.24 is only defined if $\Gamma\subset \ker\alpha$ and is equal to $E^{0,\alpha}( t',s,\Lambda,\Gamma)$ in the above notation.
\end{remark}
For the relation of the Eisenstein series with the $L$-series of a Hecke character $\chi$ of conductor dividing $\frf$ and $\infty$-type $-\alpha$, consider the partial $L$-functions
\begin{align*}
    L(\chi,s)=\sum_{[\frb]\in \cI(\frf)/\cP_\frf}L_\frf(\chi,s,[\frb]),&&\mbox{where }&&L_\frf(\chi,s,[\frb]):=\sum_{\stackrel{\fra\in [\frb]}{\fra\subset \sO_L}}\frac{\chi(\fra)}{N\fra^{s}},
\end{align*}
and $[\frb]$ runs over all classes in the ray class group $\cI(\frf)/\cP_\frf$. Then one has 
\begin{equation*}\label{eq:L-and-Eis}
    L_\frf(\chi,s,[\frb])=\chi(\frb)N\frb^{-s} E^{0,\alpha}(1,s,\frf\frb^{-1},\sO_\frf^{\times}),
\end{equation*}
where the right hand side is well-defined, because the condition on $\chi$ in \eqref{eq:p-adic-Hecke-char-2} implies that $\sO_\frf^{\times}\subset \ker\alpha$.

We now want to evaluate $\vartheta^\alpha_{\frf,\frc,n}$, which is the pull-back of an Eisenstein-Kronecker class living on 
$\cB=\cA(\frf\frb^{-1})$. As $\cB(\C)\isom \C^{\Sigma}/\frf\frb^{-1}$ we  let $\Lambda=\frf\frb^{-1}$ and we consider the evaluation map from Proposition \ref{prop:evaluation-map}, where $\frf,\frb,p$ are pairwise coprime as in Section \ref{section:EK-classes}.  For $f_1,f_2\in C_c(\QQ\otimes\Lambda/\Lambda,R)$ the convolution $f_1\ast f_2\in C_c(\QQ\otimes\Lambda/\Lambda,R)$ is defined as
\[
	(f_1\ast f_2)(z)=\sum_{\substack{x,y\in \Q\otimes \Lambda/\Lambda: \\ x+y=z}} f_1(x)f_2(y).
\] 
This sum is well-defined as $f_1,f_2\colon \QQ\otimes \Lambda/\Lambda\to R$ have finite support. We consider  for $x=1\in \frf^{-1}\Lambda$ the function $\delta_1\in C(\frf^{-1}\Lambda/\Lambda,R)\subseteq C_c(\QQ\otimes\Lambda/\Lambda,R)$.
In the following theorem we also write  $\rho\ast\delta_{1}\in C(p^{-n}\frf^{-1}\frc^{-1}\Lambda/\frc^{-1}\Lambda,R)$ for the corresponding function under the canonical isomorphism $C(p^{-n}\frf^{-1}\frc^{-1}\Lambda/\frc^{-1}\Lambda,R)\cong C(p^{-n}\frf^{-1}\Lambda/\Lambda,R)$.

\begin{theorem}\label{thm:evaluation-formula}
Let $\frf$, $\frb$, $\frc$, $p$ be pairwise coprime  and $\Lambda=\frf\frb^{-1}$. Let
$\Xi:=\LieA^\alpha\in  \Sym^{\alpha}(\Lie(\cA))$ be the dual basis of 
the basis $\omega(\cA)^{[\alpha]}$ of $\TSym^{\alpha}(\omega^{1}_{\cA})$  from \eqref{eq:m-alpha} and $\rho:p^{-n}\Lambda/\Lambda\to R$ be a function such that $\rho\otimes\Xi$ is $\sO_\frf^\times$-invariant.
Then one has in $\C$  (via $\iota_\infty$)
\begin{multline*}
     \eval_{\rho\otimes\Xi}(\vartheta_{\frf,\frc,n}^{\alpha}(\frb))\\=(-1)^{\frac{d(d-1)}{2}} \frac{(\alpha-\ul{1})!}{\Omega^\alpha}\cdot
 \left(N\frc E^{0,\alpha}(\rho\ast\delta_1,0,\frf\frb^{-1},\sO_\frf^{\times})-E^{0,\alpha}(\rho\ast\delta_1,0,\frc^{-1}\frf\frb^{-1},\sO_\frf^{\times})\right),
\end{multline*}
where $\rho\ast\delta_{1}\in C(p^{-n}\frf^{-1}\Lambda/\Lambda,R)\subseteq C_c(\QQ\otimes\Lambda/\Lambda,R)$ is the convolution of $\rho$ and $\delta_1$ and where for the second Eisenstein series we use for $\rho\ast\delta_1$ the convention explained above.
\end{theorem}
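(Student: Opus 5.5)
We reduce to the evaluation formula already established in \cite{Kings-Sprang} for the equivariant Eisenstein--Kronecker class pulled back along torsion sections. The function $\rho\otimes\Xi$ is only $\sO_\frf^\times$-invariant, not a sum of $\Gamma$-fixed delta functions. So the first step is to pass to a subgroup $\Gamma'\subseteq\Gamma=\sO_\frf^\times$ of finite index that lies in $\ker\alpha$ and fixes every point of $p^{-n}\Lambda/\Lambda$. This is possible since $\Gamma$ acts on the finite set $p^{-n}\Lambda/\Lambda$ through a finite quotient, and $\gamma^\alpha=1$ on a finite-index subgroup of the units congruent to $1$ mod $\frf p^n$. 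By Proposition \ref{prop:evaluation-map},
\[
\eval_{\rho\otimes\Xi,\Gamma}(\vartheta^\alpha_{\frf,\frc,n}(\frb))=[\Gamma:\Gamma']^{-1}\eval_{\rho\otimes\Xi,\Gamma'}(\res\,\vartheta^\alpha_{\frf,\frc,n}(\frb)).
\]
On the Eisenstein side, the corollary following Proposition \ref{prop:eis-and-finite-index} expresses $E^{0,\alpha}(\rho\ast\delta_1,0,\cdot,\Gamma)$ as $[\Gamma:\Gamma']^{-1}\sum_t(\rho\ast\delta_1)(t)E^{0,\alpha}(t,0,\cdot,\Gamma')$. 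Hence both sides are linear in $\rho$ with the same factor $[\Gamma:\Gamma']^{-1}$, and it suffices to treat $\Gamma'$ and $\rho=\delta_x$ for a single $x\in p^{-n}\Lambda/\Lambda$.

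The restriction of the Eisenstein--Kronecker class to $\Gamma'$ is again the $\Gamma'$-equivariant class, by the uniqueness in \cite[Theorem 2.20]{Kings-Sprang} together with compatibility of the residue with restriction. For $\rho=\delta_x$, we unwind the definitions. The pairing of $\delta_x$ with $\cO(\cA[p^n])$ is pullback along the torsion section $x$. Composing with $\tau_n$ (the isomorphism $\cA[p^n]\cong\cA(\frf)[p^n]$, then $[\frb]$, then translation by $[\frb](1)$) sends $x$ to the torsion point $x+1$ of $\cB$, viewed in $\frc^{-1}$-shifted coordinates. This is exactly where the convolution $\delta_x\ast\delta_1=\delta_{x+1}$ comes from. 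Cupping with $\Xi=\LieA^\alpha$ extracts the $\omega(\cA)^{[\alpha]}$-coefficient of $d^{\alpha-\ul{1}}{}_\frc\EK$ at $x+1$. The cap product with the fundamental class from Proposition \ref{prop:cap-product} then yields precisely the specialization statement of \cite{Kings-Sprang} (their evaluation of $d^{\alpha-\ul1}\EK$ at a $\Gamma'$-fixed torsion point $t$). That statement gives
\[
(-1)^{d(d-1)/2}\frac{(\alpha-\ul1)!}{\Omega^\alpha}\bigl(N\frc\,E^{0,\alpha}(t,0,\Lambda,\Gamma')-E^{0,\alpha}(t,0,\frc^{-1}\Lambda,\Gamma')\bigr).
\]
The two terms reflect $f_{[\frc]}=N\frc\,\delta_0-\sum_{s\in\cB[\frc]}\delta_s$: summing the Eisenstein series over translates by $\frc$-torsion is the same as the series for the lattice $\frc^{-1}\Lambda$.

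Finally, we translate periods. The basis $\omega(\cB)(\sigma)$ is the pullback of $\omega(\cA)(\sigma)$ along the quasi-isogeny. Under the uniformization $\cB(\CC)=\CC^\Sigma/\Lambda$, this basis is $\Omega(\sigma)^{-1}dz_\sigma$ up to the same normalization on $\cA$, which produces the factor $\Omega^{-\alpha}$. The \textbf{main obstacle} is bookkeeping in this step: checking that the identification of $\Lambda$ versus $\frc^{-1}\Lambda$ coordinates (the stated convention for $\rho\ast\delta_1$), the sign $(-1)^{d(d-1)/2}$ coming from the ordering/orientation in Proposition \ref{prop:cap-product}, and the normalization of $\Omega$ via $\xi_\infty(1)$ all match the conventions of \cite{Kings-Sprang}. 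I would verify these directly in the case $d=1$ first.
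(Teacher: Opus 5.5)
Your proposal is correct and follows the paper's proof essentially step for step: pass to a finite-index subgroup using Proposition \ref{prop:evaluation-map} and Proposition \ref{prop:eis-and-finite-index}, reduce by linearity to $\rho=\delta_r$, and identify the evaluation with the pullback of $d^{\alpha-\ul{1}}\,{}_{\frc}\EK_{\Gamma,\cB}$ at the torsion point $1+r$; then apply \cite[Corollary 3.28]{Kings-Sprang} followed by the distribution relation for $f_{[\frc]}$, and use $\delta_r\ast\delta_1=\delta_{1+r}$ (the sign and period bookkeeping you flag is already contained in that corollary, which produces the factor $(-1)^{d(d-1)/2}(\alpha-\ul{1})!/\Omega^\alpha$ directly). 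One correction: your stated reason for the existence of a finite-index subgroup inside $\ker\alpha$ is not valid for arbitrary $\alpha$ (it fails for non-parallel $\alpha$ on a CM field of degree at least $4$), but the existence follows from the hypothesis, because if $\Gamma'$ fixes $p^{-n}\Lambda/\Lambda$ pointwise and $\rho\otimes\Xi$ is $\Gamma'$-invariant with $\rho\neq 0$, then $\gamma^\alpha=1$ for all $\gamma\in\Gamma'$; the paper also lets $\Gamma'$ fix the $\frc$-torsion, so that the intermediate sum $\sum_{s}f_{[\frc]}(-s)E^{0,\alpha}(1+r+s,0,\Lambda,\Gamma')$ is defined term by term before the distribution relation is applied.
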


\begin{proof}By Proposition \ref{prop:evaluation-map} and Proposition \ref{prop:eis-and-finite-index} we may replace $\sO_\frf^\times$ by a finite index subgroup $\Gamma$ which acts trivially on $p^{-n}\frc^{-1}\frb^{-1}\Lambda/\Lambda$. Furthermore, both sides of the equation in the statement of the theorem depend $R$-linearly on $\rho$. Hence it suffices to prove the formula for $\rho=\delta_r$ with $r\in p^{-n}\Lambda/\Lambda$. Write $x\in \cA[p^n\frf](\cR)$ for the torsion point corresponding $1+r\in \Q\otimes\Lambda$. As $\tau_{n}$ maps $p^{-n}\sO_L/\sO_L$ to $1+p^{-n}\Lambda/\Lambda$ the evaluation $\eval_{\rho\otimes\Xi}(\vartheta_{\frf,\frc,n}^{\alpha}(\frb))$ coincides with the image of $d^\alpha {_\frc}\EK_{\Gamma,\cB}$ under the map
\begin{equation}\label{eq:pullback-EK}
	H^{d-1}(\cU_\frc,\Gamma; \TSym^{\alpha}(\Omega^1_\cB)\otimes \Omega^d_\cB)\xrightarrow{x^*} H^{d-1}(\Gamma,\TSym^\alpha(\omega_\cA)\otimes \omega_\cA^d)\cong R,
\end{equation}
where the last isomorphism is induced by $\Xi$ and Proposition \ref{prop:cap-product}. The image $d^\alpha {_\frc}\EK_{\Gamma,\cB}$ under \eqref{eq:pullback-EK} has been computed in \cite[Corollary 3.28]{Kings-Sprang}, where it was denoted by $\EK_\Gamma^{0,\alpha}(f_\frc,x)$. Using Corollary 3.28 of \cite{Kings-Sprang}, one obtains:
\begin{align*}
    \eval_{\rho\otimes\Xi}(\vartheta_{\frf,\frc,n}^{\alpha}(\frb))&=(-1)^{\frac{d(d-1)}{2}}\frac{(\alpha-\ul{1})!}{\Omega^{\alpha}}\sum_{s\in \frc^{-1}\Lambda/\Lambda}f_{[\frc]}(-s)E^{0,\alpha}(1+r+s,0,\frf\frb^{-1},\Gamma).
\end{align*}
As in \cite[Proposition 4.12]{Kings-Sprang} the distribution relation for the Eisenstein series gives 
\begin{equation*}
    \eval_{\rho\otimes\Xi}(\vartheta_{\frf,\frc,n}^{\alpha}(\frb))=(-1)^{\frac{d(d-1)}{2}}\frac{(\alpha-\ul{1})!}{\Omega^{\alpha}}\left(N\frc E^{0,\alpha}(1+r,0,\frf\frb^{-1},\Gamma)-E^{0,\alpha}(1+r,0,\frc^{-1}\frf\frb^{-1},\Gamma)\right).
\end{equation*}
Since we assumed $\rho=\delta_r$, the formula in the Proposition follows  from $\rho\ast\delta_1=\delta_r\ast\delta_1=\delta_{1+r}$. 
\end{proof}

\section{Distributions and cohomology classes}
In this section we discuss how cohomology classes give rise to distributions. Recall that $\Gamma\subset \sO_L^{\times}$ is a subgroup of  finite index. 

\begin{notation}
From now on, we consider $\cA$ over $\OCp$ via $\iota_p$.
\end{notation}
Let us write 
\begin{equation*}
    G=\cA[p^{\infty}]
\end{equation*}
for the $p$-divisible group of $\cA$ considered over $\sO_{\Cp}$ and $\cO(G)$ for its ring of functions. 
As $\Lie (G)=\Lie (\cA)$ one has  $\omega_G=\omega_\cA$. In particular, one has $\omega_G\isom \bigoplus_{\sigma\in \Sigma}\omega_G(-\sigma)$ and one can define $\TSym^{\alpha}(\omega_G)$ and $\Sym^{\alpha}(\Lie(G))$.
The dual Hodge--Tate pair, see \eqref{def:dual-HT-pair}, corresponding to $G$ is 
\[
	(W,T):=HT^\vee(G)=(\Lie (G)\otimes \Cp,T_p(G^\vee)).
\]
Theorem \ref{thm:UniformizationOfCharVar} gives over $\Cp$ a canonical isomorphism of adic spaces
\[
	\widehat{T}_W \cong G_\eta^\ad,
\]
and from Theorem \ref{thm:pAdicFourierMainTheoremW} it follows that 
\begin{equation}\label{eq:SigmaAnDistG}
	\cO(G_\eta^\ad)\cong D_W(T,\Cp).
\end{equation}

As in Notation \ref{not:diff-op} we view elements $\Xi\in \Sym^{\alpha}(\Lie(G))$ as differential operators on $\cO(G)$ and as polynomial functions $P_\Xi$ on $T=T_p(G^{\vee})$.  We write $j\colon T^\times \hookrightarrow  T$ for the inclusion of the open subset $T^\times \subseteq T$ and write
\[
	j^*\colon C_W^{an}(T,\Cp)\to C_W^{an}(T^\times,\Cp), \quad j_!\colon C_W^{an}(T^\times,\Cp)\to C_W^{an}(T,\Cp),
\]
for the restriction and the extension by zero map. For any locally constant function $\rho \colon T^\times\to \Cp$ the function $j_!\rho\colon T\to \Cp$ is locally constant as well, so that one can define the Fourier transform of $j_!\rho$ \ref{def:Fourier-transform} 
\[
	\widehat{j_!\rho}\colon\bigcup_n\widehat{T}_W(\Cp)[p^n]\to \Cp.
\]
Recall that one has $\cA(\CC)=\CC^\Sigma/\sO_L$. In particular, one has $\cA(\CC)[p^n]=p^{-n}\sO_L/\sO_L$. Using the canonical isomorphism
\[
	\bigcup_n p^{-n}\sO_L/\sO_L \cong\bigcup_n G(\OCp)[p^n]\cong\bigcup_n\widehat{T}_W(\Cp)[p^n],
\]
we will view $\widehat{j_!\rho}$ as a function on $\bigcup_n p^{-n}\sO_L/\sO_L$. The $W$-analytic distributions on $T^\times/E(\Gamma)$ can be identified with the continuous dual of the $\Gamma$-invariant functions
\begin{equation*}
    D_W(T^\times/E(\Gamma),\Cp):=(C^{\an}_W(T^\times,\Cp)^{\Gamma})'.
\end{equation*}
\begin{theorem}\label{thmB}
There exists a $\OCp$-linear map
\[
	H^{d-1}(\Gamma,\cO(G)\otimes\TSym^{\ul{1}}(\omega_\cA)\otimes \Sym^{\ul{1}}(\Lie(\cA)) \otimes \omega^d_{\cA})\to D_W(T^\times/E(\Gamma),\Cp), \quad \vartheta \mapsto \mu_\vartheta
\]
satisfying the following integration formula: For any 
locally constant $\rho:T^\times\to \ol{\QQ}\subseteq \C_p$ and $\Xi\in \Sym^{\alpha}(\Lie(G))$ such that $\rho\otimes\Xi$ is $\Gamma$-invariant, one has
	\begin{equation*}
		\int_{T^\times/E(\Gamma)} \rho\cdot j^*P_\Xi d{\mu}_{\vartheta}= \eval_{\widehat{j_!\rho}\otimes \Xi}(d^{\alpha-\underline{1}}\vartheta),
	\end{equation*}
	where $\eval_{\widehat{j_!\rho}\otimes \Xi}$ is the  evaluation map defined in Proposition \ref{prop:evaluation-map}.
\end{theorem}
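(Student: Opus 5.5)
We build $\mu_\vartheta$ in three steps: cap the group cohomology class down to a $\Gamma$-coinvariant function on $G$, Fourier-transform that function into a $W$-analytic distribution on $T$ via \eqref{eq:SigmaAnDistG}, and restrict the result to $T^\times$.

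\textbf{Step 1.} Apply Proposition \ref{prop:cap-product} with $M=\cO(G)\otimes\Sym^{\ul{1}}(\Lie(\cA))$. By the proof of that proposition, $\Gamma$ acts trivially on $\TSym^{\ul 1}(\omega_\cA)\otimes\Sym^{\ul 1}(\Lie(\cA))\otimes\omega^d_\cA$. We get a map
\[
\xi\cap-\colon H^{d-1}(\Gamma,\cO(G)\otimes\TSym^{\ul{1}}(\omega_\cA)\otimes\Sym^{\ul 1}(\Lie(\cA))\otimes\omega^d_\cA)\to H_0(\Gamma,\cO(G)),
\]
after contracting $\TSym^{\ul1}(\omega_\cA)\otimes\Sym^{\ul1}(\Lie(\cA))\cong R$. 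Write $f_\vartheta$ for the image of $\vartheta$; it is a $\Gamma$-coinvariant class of functions on $G$.

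\textbf{Step 2.} Restrict to the generic fibre, $\cO(G)\to\cO(G_\eta^\ad)$, and apply \eqref{eq:SigmaAnDistG}. This gives a class in $D_W(T,\Cp)_\Gamma$. It is the Fourier transform of a distribution $\mu_{f_\vartheta}\in D_W(T,\Cp)$, well defined up to the image of $\gamma-1$. The isomorphism of Theorem \ref{thm:UniformizationOfCharVar} is functorial for the $\sO_L^\times$-action on the pair $(W,T)$, so it is $\Gamma$-equivariant.

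\textbf{Step 3.} Define
\[
\mu_\vartheta(h):=\mu_{f_\vartheta}(j_!h)\quad\text{for } h\in C^{\an}_W(T^\times,\Cp)^\Gamma.
\]
This is independent of the coinvariant representative: since $j_!h$ is $\Gamma$-invariant, $(\gamma\mu-\mu)(j_!h)=0$. Continuity follows from continuity of $j_!$. $\OCp$-linearity is clear.

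\textbf{Step 4 (integration formula).} Let $\rho$ and $\Xi$ be as in the statement. Then
\[
\int\rho\cdot j^*P_\Xi\,d\mu_\vartheta=\mu_{f_\vartheta}(j_!\rho\cdot P_\Xi).
\]
By Corollary \ref{cor:dist-integration-formula} this equals $\sum_\chi\widehat{j_!\rho}(\chi)\,(\Xi.f_\vartheta)(\chi)$, summed over torsion points, i.e.\ over $\bigcup_n p^{-n}\sO_L/\sO_L$. This is the pairing of $\widehat{j_!\rho}$ with the pullbacks of the function $\Xi.f_\vartheta$ to the torsion sections.

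We must match it with $\eval_{\widehat{j_!\rho}\otimes\Xi}(d^{\alpha-\ul1}\vartheta)$, which is built from Proposition \ref{prop:evaluation-map}. There the order of operations is reversed: one first applies $d^{\alpha-\ul1}$, cups with $\widehat{j_!\rho}\otimes\Xi$, and only then caps with $\xi$. Pairing $d^{\alpha-\ul1}$ with $\Xi\in\Sym^{\alpha}(\Lie)$ uses the identification of $\TSym^{\alpha}(\omega)$ with $\TSym^{\alpha-\ul1}(\omega)\otimes\TSym^{\ul1}(\omega)$, where the extra $\TSym^{\ul 1}$ contracts against the $\Sym^{\ul1}(\Lie)$ factor. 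Under this identification, contracting the differential with $\Xi$ is precisely the invariant differential operator $\Xi.$ of Notation \ref{not:diff-op}, acting on $\cO(G)$ through the group structure. So the differential operator commutes with the cap product, and the cup product with the $\Gamma$-invariant element $\widehat{j_!\rho}\otimes\Xi$ commutes with $\xi\cap$ by naturality of the cap product. We also need that $\widehat{j_!\rho}$ is $\Gamma$-compatible in the sense required by Proposition \ref{prop:evaluation-map}; this follows from the equivariance of the Fourier transform in Definition \ref{def:Fourier-transform} together with the $\Gamma$-invariance of $\rho\otimes\Xi$.

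\textbf{Main obstacle.} Two compatibilities must be checked carefully. The first is the torsion-point identification $\bigcup_n G(\OCp)[p^n]\cong\bigcup_n\widehat T_W(\Cp)[p^n]$ from Theorem \ref{thm:UniformizationOfCharVar}, including sign and duality conventions coming from $\xi_p^\vee$. The second is that the algebraic derivation $d$ on $\cO(G)$ agrees with the Lie-algebra derivation on $\cO(\widehat T_W)$ under the identification of $\Lie(G)\otimes\Cp$ with $W$. I would check both on $\Gmf$-coordinates using the pullback diagram \eqref{eq:ScholzeWeinsteinCartDiag}.
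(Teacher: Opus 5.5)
\textbf{There is a genuine gap.} The distribution you build satisfies the integration formula with the wrong degree shift.

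With $\mu_\vartheta(h)=\mu_{f_\vartheta}(j_!h)$, Corollary \ref{cor:dist-integration-formula} gives $\int\rho\cdot j^*P_\Xi\,d\mu_\vartheta=\sum_x\widehat{j_!\rho}(x)\,(\Xi.f_\vartheta)(x)$. That is an order-$|\alpha|$ derivative of $f_\vartheta$. The right-hand side, however, only involves $d^{\alpha-\ul 1}\vartheta$.

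In $\eval_{\widehat{j_!\rho}\otimes\Xi}$ the element $\Xi\in\Sym^{\alpha}(\Lie(\cA))$ is paired with $\TSym^{\alpha}(\omega_\cA)$. This factor is built from $\TSym^{\alpha-\ul 1}(\omega_\cA)$, coming from $d^{\alpha-\ul1}$, together with the $\TSym^{\ul 1}(\omega_\cA)$-factor already present in $\vartheta$. The $\Sym^{\ul1}(\Lie(\cA))$-factor of $\vartheta$ is \emph{not} touched by $\Xi$; it survives to be capped against $\omega^d_\cA$ via Proposition \ref{prop:cap-product}. Write $\Xi=\Xi'\cdot\Xi_{\ul1}$ with $\Xi'\in\Sym^{\alpha-\ul1}(\Lie(\cA))$. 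Then the right-hand side involves $\Xi'.f$, not $\Xi.f$. Already for $\alpha=\ul 1$ your formula would produce $\Xi_{\ul1}.f_\vartheta$ at the torsion points instead of $f_\vartheta$ itself. Your sentence ``the extra $\TSym^{\ul 1}$ contracts against the $\Sym^{\ul1}(\Lie)$ factor'' is exactly where these two factors get conflated.

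\textbf{The missing ingredient is the untwisting \eqref{eq:UntwistNSigma}.} On $T^\times$ the polynomial $P_{\Xi_{\ul1}}$ is a unit: up to the nonzero constant $\Omega_p^{\ul1}$ it is $z\mapsto\prod_{\sigma\in\Sigma}\sigma(z)$. So after restricting to $T^\times$ one may replace a distribution $\nu$ by $P_{\Xi_{\ul1}}^{-1}\nu\otimes\Xi_{\ul1}$. Then $\int\rho\cdot j^*P_\Xi\,d\mu_\vartheta=\int_T (j_!\rho)\,P_{\Xi'}\,d\mu_f$, and Corollary \ref{cor:dist-integration-formula} yields $\Xi'.f$ as required. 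This, not merely the shape of the target space, is why one passes to $T^\times$.

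$P_{\Xi_{\ul1}}$ is not $\Gamma$-invariant: it transforms by the same character as $\Sym^{\ul1}(\Lie(\cA))$. So the division is $\Gamma$-equivariant only if the $\Sym^{\ul1}(\Lie(\cA))$-twist is carried along as a coefficient. Consequently the cap product must come last, after pairing with $\Gamma$-invariant $h$, when the coefficients are $\Sym^{\ul1}(\Lie(\cA))\otimes\omega^d_\cA$ with trivial $\Gamma$-action.

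Your Step 1 caps first and contracts the twist away, which destroys exactly this information. It also misapplies Proposition \ref{prop:cap-product}: its hypothesis, trivial action on $M\otimes\omega^d_\cA$, fails for $M=\cO(G)\otimes\Sym^{\ul1}(\Lie(\cA))$. Moreover, after contracting $\TSym^{\ul1}\otimes\Sym^{\ul1}$ you are left with $\cO(G)\otimes\omega^d_\cA$, not $\cO(G)$, so you do not land in $H_0(\Gamma,\cO(G))$.

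The compatibilities you list as the main obstacle (the torsion-point identification and the matching of derivations) are real but secondary.
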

\begin{proof}
We first construct the $\Cp$-linear map in the statement: The canonical $\Gamma$-equivariant isomorphism $\TSym^{\ul{1}}(\omega_\cA)\otimes \Sym^{\ul{1}}(\Lie(\cA))\cong \OCp$ gives
\begin{equation}\label{eq:contractSymTSym}
	H^{d-1}(\Gamma,\cO(G)\otimes\TSym^{\ul{1}}(\omega_\cA)\otimes \Sym^{\ul{1}}(\Lie(\cA)) \otimes \omega^d_{\cA})\cong H^{d-1}(\Gamma,\cO(G)\otimes \omega^d_{\cA})
\end{equation}
Passing to the generic fiber $G_\eta^\ad$ and applying \eqref{eq:SigmaAnDistG} gives a map
\begin{equation}\label{eq:FunctionsOnGtoDistributions}
	  H^{d-1}(\Gamma,\cO(G)\otimes \omega^d_{\cA}))\to H^{d-1}(\Gamma,D_W(T,\Cp)\otimes\omega^d_{\cA}).
\end{equation}
The map $j_!$ induces a restriction map on distributions
\begin{equation}\label{eq:RestrictDistributions}
	  H^{d-1}(\Gamma,D_W(T,\Cp)\otimes\omega^d_{\cA})\xrightarrow{(\cdot)|_{T^\times}} H^{d-1}(\Gamma,D_W(T^\times,\Cp)\otimes\omega^d_{\cA}).
\end{equation}
Observe that one has a $\Gamma$-equivariant $\Cp$-linear isomorphism
\begin{equation*}
	D_W(T^{\times},\Cp) \xleftarrow{\sim} D_W(T^\times,\Cp)\otimes\Sym^{\ul{1}}(\Lie(\cA)),\quad  P_{\Xi_{\ul{1}}}\cdot \mu \mapsfrom \mu\otimes \Xi_{\ul{1}},
\end{equation*}
which induces an isomorphism
\begin{equation}\label{eq:UntwistNSigma}
	H^{d-1}(\Gamma,D_W(T^{\times},\Cp)\otimes\omega^d_{\cA}) \xrightarrow{\sim} H^{d-1}(\Gamma,D_W(T^\times,\Cp)\otimes\Sym^{\ul{1}}(\Lie(\cA))\otimes\omega^d_\cA).
\end{equation}
The canonical $\Gamma$-equivariant pairing
\begin{equation*}
    C_W(T^\times,\C_p)\times D_W(T^\times,\Cp)\to \C_p
\end{equation*}
induces, by taking the cup-product, a pairing
\begin{equation*}
    H^{0}(\Gamma,C_W(T^\times,\C_p))\times H^{d-1}(\Gamma,D_W(T^\times,\Cp)\otimes \Sym^{\ul{1}}(\Lie(G))\otimes\omega^d_{\cA})\to 	H^{d-1}(\Gamma,\Sym^{\ul{1}}(\Lie(G))\otimes\omega^d_{\cA}).
\end{equation*}
If we combine this with the isomorphism in Proposition \ref{prop:cap-product} we get 
\begin{equation*}
     H^{0}(\Gamma,C_W(T^\times,\C_p))\times H^{d-1}(\Gamma,D_W(T^\times,\Cp)\otimes \Sym^{\ul{1}}(\Lie(G))\otimes\omega^d_{\cA})\to \C_p
\end{equation*}
which by duality gives
\begin{equation}\label{eq:eval}
    H^{d-1}(\Gamma,D_W(T^\times,\Cp)\otimes \Sym^{\ul{1}}(\Lie(G))\otimes\omega^d_{\cA})\to D_W(T^\times,\Cp)_\Gamma.
\end{equation}
The composition of \eqref{eq:contractSymTSym}, \eqref{eq:FunctionsOnGtoDistributions}, \eqref{eq:RestrictDistributions},  \eqref{eq:UntwistNSigma} and \eqref{eq:eval} gives the desired $\Cp$-linear map
\begin{align}\label{eq:Coh-to-Dist}
	H^{d-1}(\Gamma,\cO(G)\otimes\TSym^{\ul{1}}(\omega_\cA)\otimes \Sym^{\ul{1}}(\Lie(\cA))\otimes \omega^d_{\cA})&\to D_W(T^\times/E(\Gamma), \Cp),\\
	\notag \vartheta &\mapsto \mu_\vartheta.
\end{align}
The integration formula is a consequence of Corollary \ref{cor:dist-integration-formula}.
\end{proof}

\section{Proof of Theorem \ref{thmA}}
We recall the setup of theorem \ref{thmA}: $L$ is a totally imaginary field containing the maximal  CM field $K$, $p$ is a prime which is unramified in $L$, and $\Sigma$ is a CM type of $L$ induced from $K$, and $\frf\subseteq \sO_L$ is an ideal co-prime to $p$.

Further, we keep the previous notation, i.e., $\cA$ is an abelian variety over $R$ with a fixed isomorphism $\cA(\CC)\cong \CC^\Sigma/\sO_L$. We choose an $R$-basis $\omega(\cA)(\sigma)$ of $\omega_{\cA}(\sigma)$ for every $\sigma\in \Sigma$ and define the ($p$-adic) periods as in section \ref{sec:Periods}. We write $G=\cA[p^{\infty}]$ for the $p$-divisible group of $\cA$ over $\OCp$ and $\cO(G)=\prolim_n\cO(\cA[p^{n}])$. 

Let $\frc$ be an auxiliary ideal co-prime to $p\frf$ and recall that  we have defined in \ref{def:vartheta}  for $\alpha\geq \underline{1}$ and any ideal $\frb\subseteq\sO_L$ co-prime to $p\frf\frc$  cohomology classes
\begin{equation*}
    \vartheta_{\frf,\frc,n}^{\alpha}(\frb)\in H^{d-1}(\Gamma,\cO(\cA[p^{n}])\otimes\TSym^{\alpha}(\omega_\cA)\otimes\Sym^{\ul{1}}(\Lie(\cA))\otimes \omega^{d}_{\cA})).
\end{equation*}

\begin{definition} Let $\alpha\ge \ul{1}$ and $\frb$ an  ideal $\frb\subseteq\sO_L$ co-prime to $p\frf\frc$. We define
\begin{equation*}
    \vartheta^{\alpha}_{\frf,\frc}(\frb):=\vartheta^{\alpha}_{\frf,\frc,\Gamma}(\frb):=\prolim_n(\vartheta^{\alpha}_{\frf,\frc,n}(\frb))\in H^{d-1}(\Gamma,\cO(G)\otimes\TSym^{\alpha}(\omega_\cA)\otimes\Sym^{\ul{1}}(\Lie(\cA))\otimes\omega^{d}_{\cA}),
\end{equation*}
where we have used that the cohomology commutes with limits, as $\Gamma$ is finitely generated. If $\alpha=\ul{1}$ we write
\begin{equation*}
    \vartheta_{\frf,\frc}(\frb)=\vartheta^{\ul{1}}_{\frf,\frc}(\frb).
\end{equation*}
\end{definition}
\begin{proposition}\label{prop:theta-diff-compatibility-and-subgroups}
Let $d:\cO(G)\to \cO(G)\otimes\Omega^{1}_{\cA}$ be the differential on $\cO(G)$, where we have identified $\Omega^{1}_G\isom \Omega^{1}_{\cA}$. Then 
\begin{equation*}
    \vartheta_{\frf,\frc}^{\alpha}(\frb)=d^{\alpha-\ul{1}}\vartheta_{\frf,\frc}(\frb)
\end{equation*}
in $H^{d-1}(\Gamma,\cO(G)\otimes\TSym^{\alpha}(\omega_\cA)\otimes\Sym^{\ul{1}}(\Lie(\cA))\otimes\omega^{d}_{\cA})$.
\end{proposition}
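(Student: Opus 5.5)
The plan is to reduce the claim to finite level $n$ and then to the definition of the classes $\vartheta^{\alpha}_{\frf,\frc,n}(\frb)$ as pull-backs under $\tau_n$. By definition,
\[
\vartheta^{\alpha}_{\frf,\frc,n}(\frb)=\tau_n^*(d^{\alpha-\ul{1}}\,{_\frc}\EK_{\Gamma,\cB})\quad\text{and}\quad \vartheta_{\frf,\frc,n}(\frb)=\tau_n^*({_\frc}\EK_{\Gamma,\cB}).
\]
Moreover, $\vartheta^{\alpha}_{\frf,\frc}(\frb)$ is the inverse limit over $n$. Since $\Gamma$ is finitely generated, the cohomology commutes with these limits. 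It therefore suffices to show that pull-back along $\tau_n$ commutes with the differential operator $d^{\alpha-\ul{1}}$, compatibly in $n$. Here the differential on $\cO(G)=\prolim_n\cO(\cA[p^n])$ is understood as the limit of the differentials on the infinitesimal neighbourhoods.

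First I check that $\tau_n$ is compatible with differentials. The map $\tau_n$ is the composite of the isomorphism $\cA[p^n]\cong\cA(\frf)[p^n]$ induced by $[\frf]$, the isogeny $[\frb]$ (étale, since $\frb$ is prime to $p$), and the translation $T_{[\frb](1)}$. Each of these is the restriction of an étale morphism of abelian schemes $\cA\leftarrow\cA(\frf)\to\cB\to\cB$. Hence each identifies $\Omega^1$ with $\Omega^1$ of the source. These identifications are exactly the ones used to write $\omega_\cB\cong\omega_\cA$ and to transport the bases $\omega(\cB)(\sigma)$, and they respect the $\sigma$-decomposition because all maps are $\sO_L$-linear. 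For an étale (or translation) map $h$ one has $h^*\circ d=d\circ h^*$ on $\cO$. Iterating and projecting to the $\TSym^{\alpha}$-component then gives $h^*\circ d^{\beta}=d^{\beta}\circ h^*$ for all $\beta\in\NN^\Sigma$.

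Next I must make sense of $d$ on $\cO(\cA[p^n])$. The scheme $\cA[p^n]$ is not smooth, so $d$ is only defined on the pro-system. The plan is to note that $d^{\beta}$ maps the ideal of $\cA[p^{m}]$ in $\cA$ into the ideal of $\cA[p^{m'}]$ for $m'$ sufficiently smaller, using $[p]^*\omega=p\,\omega$ on invariant differentials. Consequently $d^\beta$ is well defined on $\prolim_n\cO(\cA[p^n])$ as the completion of $\cA$ along $G$, and it agrees with the $d$ in the statement. With this, I apply the commutation at the level of Čech/equivariant cocycles on $\cU_\frc$. The pull-back $\tau_n^*$ of a class in $H^{d-1}(\cU_\frc,\Gamma;\cF)$ is computed by pulling back local sections, and $d^{\alpha-\ul{1}}$ acts sectionwise. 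Compatibility with the $\Gamma$-action holds because $\tau_n$ is $\Gamma$-equivariant. Finally I pass to the limit over $n$, using $\tau_{n+1}|_{\cA[p^n]}=\tau_n$.

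The main obstacle is this second step, namely giving a precise meaning to $d$ on the non-reduced finite schemes $\cA[p^n]$ and checking that it commutes with $\tau_n^*$ only after passing to the limit. My first attempt is to work with the formal completion $\widehat{\cA}$ along the $p$-divisible group, where $d$ is honest, rather than with each $\cA[p^n]$ separately.
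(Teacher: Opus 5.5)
Your overall route is the paper's. The paper's proof is the single remark that the statement follows from the compatibility of the differential on $\cO_\cA$ with the one on $\cO(G)$. Your first step supplies the naturality half of this correctly: the $\tau_n$ are restrictions of $\sO_L$-linear étale maps and translations, so pull-back commutes with $d^\beta$ and respects the $\sigma$-decomposition.

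The gap is in the step you add to justify the compatibility. You claim that $d^\beta$ carries the ideal $I_m$ of $\cA[p^m]$ into $I_{m'}\otimes\omega_\cA$ for some $m'$. This is false. Already for $\mu_{p^\infty}\subset\mathbb{G}_m$ one has $d(T^{p^m}-1)=p^mT^{p^m}\,dT/T$. Here $p^mT^{p^m}$ is $p^m$ times a unit in the free $\OCp$-module $\OCp[T]/(T^{p^{m'}}-1)$, so it is nonzero there for every $m'$. Hence $d$ is not continuous for the topology defined by the kernels of $\cO(G)\to\cO(\cA[p^n])$. So $d$ on $\cO(G)$ cannot be produced as a limit of maps between the rings $\cO(\cA[p^n])$, which is what your well-definedness argument does.

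What $[p^m]^*\omega=p^m\omega$ actually gives is weaker, but sufficient. Local generators of $I_m$ can be taken of the form $[p^m]^*g$ with $g$ vanishing on the zero section. Then $d([p^m]^*g)=[p^m]^*(dg)\in p^m\,\cO_\cA\otimes\omega_\cA$, so $d(I_m)\subseteq (I_m+p^m\cO_\cA)\otimes\omega_\cA$. Consequently:
\begin{itemize}
\item every $d^\beta$ descends to $\cO(\cA[p^m])/p^m$, compatibly in $m$;
\item there it commutes with $\tau_m^*$, since the ideal of $[\frb](1)+\cB[p^m]$ is handled the same way, translations preserving invariant differentials;
\item each $\cO(\cA[p^m])$ is finite free over the $p$-adically complete ring $\OCp$, so $\prolim_m\cO(\cA[p^m])/p^m=\prolim_m\cO(\cA[p^m])=\cO(G)$.
\end{itemize}
This defines $d$ on $\cO(G)$ and gives exactly the compatibility the paper invokes. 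Your comparison of classes then goes through: compare them modulo $p^m$ at each finite level and pass to the limit.

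Note that this uses the base $\OCp$ essentially, a point your proposal leaves implicit. Over $R$ the ring $\prolim_n\cO(\cA[p^n])$ need not be stable under $d$. For $\mu_{p^\infty}$ over $\ZZ_{(p)}$, take $a\in\Zp\setminus\ZZ_{(p)}$: $d$ sends the compatible system $T^a=([a \bmod p^n])_n$ to $a\,T^a\,dT/T$, whose coefficients are not in $\ZZ_{(p)}$.
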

\begin{proof}
This follows from  the compatibility of the differential on $\cO_\cA$ and $\cO(G)$. The second statement follows from \cite{Kings-Sprang} Corollary 2.29.
\end{proof}
Theorem \ref{thmB} assigns to $(-1)^{\frac{d(d-1)}{2}}\vartheta_{\frf,\frc}(\frb)$ a distribution $\widetilde{\mu}_{\frc,\frf}(\frb)\in D_W(T^\times/E(\sO_\frf^\times),\Cp)$,
where 
\[
	(W,T):=HT^\vee(G)=(\Lie (G)\otimes \Cp,T_p(G^\vee)).
\]
is the Hodge--Tate pair, see \eqref{def:dual-HT-pair}, corresponding to $G$. The fixed generator $\xi_p^\vee$ of the free $\sO_L\otimes \Zp$-module $T=T_p(G^\vee)$ gives an isomorphism $T\cong \sO_L\otimes \Zp$ and the $\Cp$-subvector space $W=\Lie(G)\otimes \Cp \subseteq \Hom(T,\Cp)$ is identified with the vector space $W(\Sigma)\subseteq \Hom(\sO_L\otimes \Zp,\Cp)$ defined in \eqref{eq:W-Sigma}. Using these identifications, we will view $\widetilde{\mu}_{\frc,\frf}(\frb)$ as a $\Sigma$-analytic distribution on $(\sO_L\otimes \Zp)^\times/E(\sO_\frf^\times)$, i.e., we get
\begin{equation}\label{eq:mu-tilde}
\widetilde{\mu}_{\frc,\frf}(\frb)\in D_{W(\Sigma)}((\sO_L\otimes \Zp)^\times/E(\sO_\frf^\times),\Cp)
\end{equation}
Our next goal is to compute the value of this distribution on certain $\Sigma$-analytic functions explicitly. For any locally constant function $\rho\colon (\sO_L\otimes \Zp)^\times \to \ol{\QQ}\subseteq\Cp$, the Fourier transform of the extension by zero gives a finitely supported function
\[
	\widehat{j_!\rho}\colon \sO_L\otimes \Qp/\Zp\to \ol{\QQ}, 
\]
i.e., a function in $\varinjlim_n C(p^{-n}\sO_L/\sO_L,\ol{\QQ})=C_c(\Qp\otimes \Lambda/\Lambda,R)$. Since $\frf$ and $\frb$ are co-prime to $p$, we have for $\Lambda=\frf\frb^{-1}$ canonical isomorphisms $p^{-n}\Lambda/\Lambda\cong p^{-n}\sO_L/\sO_L$. These isomorphisms allow us to view $\widehat{j_!\rho}\in C_c(\QQ\otimes\Lambda/\Lambda,R)$. Furthermore, we write $\delta_1\in C_c(\QQ\otimes\Lambda/\Lambda,R)$ for the characteristic function of $1+\Lambda$. Using the integration formulas from Theorem \ref{thmB}, we get:

\begin{proposition}\label{propB} The $\Sigma$-analytic distribution $\widetilde{\mu}_{\frc,\frf}(\frb)$ 
satisfies the following integration formulas:
For any locally constant $\rho \colon (\sO_L\otimes \Zp)^\times\to R\subseteq\Cp$ and $\alpha\in \ZZ_{> 0}^\Sigma$ such that the function $\rho(\cdot)(\cdot)^\alpha\colon \sO_L\otimes\Zp \to R$ is $\Gamma$-invariant, one has
	\begin{multline*}
		\frac{1}{\Omega_p^{\alpha}} \int_{(\sO_L\otimes \Zp)^\times/E(\sO_\frf^\times)} \rho(z)z^{\alpha} \, d\widetilde{\mu}_{\frf,\frc}(\frb)\\
= \frac{(\alpha-\ul{1})!}{\Omega^\alpha} \left(N\frc E^{0,\alpha}(\widehat{j_!\rho}\ast \delta_1,0; \frf\frb^{-1},\sO_{\frf}^\times)-E^{0,\alpha}(\widehat{j_!\rho}\ast \delta_1,0; \frf\frc^{-1}\frb^{-1},\sO_{\frf}^\times)\right),
	\end{multline*}
	where we view $\widehat{j_!\rho}\in C_c(\QQ\otimes\Lambda/\Lambda,R)$ for $\Lambda=\frf\frb^{-1}$ (resp. $\Lambda=\frf\frc^{-1}\frb^{-1}$) as explained above.
\end{proposition}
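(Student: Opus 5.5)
The plan is to chain Theorem~\ref{thmB}, Proposition~\ref{prop:theta-diff-compatibility-and-subgroups} and Theorem~\ref{thm:evaluation-formula}, and then account for the $p$-adic periods. By construction, $\widetilde{\mu}_{\frf,\frc}(\frb)$ is the image of $(-1)^{\frac{d(d-1)}{2}}\vartheta_{\frf,\frc}(\frb)$ under the map of Theorem~\ref{thmB}, transported to $(\sO_L\otimes\Zp)^\times$ via the isomorphism $T\cong \sO_L\otimes\Zp$ given by $\xi_p^\vee$.

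\textbf{Step 1: identify the polynomial.} I first determine what the function $z\mapsto z^\alpha$ on $\sO_L\otimes\Zp$ corresponds to. Take $\Xi:=\LieA^\alpha\in\Sym^\alpha(\Lie(G))$, the dual basis of $\omega(\cA)^{[\alpha]}$. The element $\LieA(\sigma)\in\Lie(G)=W$ is a linear form on $T=T_p(G^\vee)$. Its value on the generator $\xi_p^\vee$ is the $p$-adic period $\Omega_p(\sigma)$, by Definition~\ref{def:p-adic-periods}, and it is $\sO_L$-semilinear of type $\sigma$. Under $T\cong\sO_L\otimes\Zp$, $x\xi_p^\vee\leftrightarrow x$, this gives
\[
P_{\LieA(\sigma)}(x)=\Omega_p(\sigma)\,\sigma(x), \qquad\text{hence}\qquad j^*P_\Xi(z)=\Omega_p^\alpha z^\alpha .
\]
The one thing to verify here is that the Hodge--Tate embedding $\Lie G\subseteq\Hom(T_pG^\vee,\Cp)$ is exactly the pairing $\langle\cdot,\cdot\rangle_p$ used to define $\Omega_p$. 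This is where I expect the main care with conventions to be needed, since a sign or a dual could appear. Given this identification, $\Gamma$-invariance of $\rho(\cdot)(\cdot)^\alpha$ is the same as $\Gamma$-invariance of $\rho\otimes\Xi$.

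\textbf{Step 2: apply the integration formula.} Theorem~\ref{thmB} gives
\[
\frac{1}{\Omega_p^\alpha}\int \rho(z)z^\alpha\,d\widetilde{\mu}=(-1)^{\frac{d(d-1)}{2}}\eval_{\widehat{j_!\rho}\otimes\Xi}\bigl(d^{\alpha-\ul{1}}\vartheta_{\frf,\frc}(\frb)\bigr).
\]
By Proposition~\ref{prop:theta-diff-compatibility-and-subgroups}, $d^{\alpha-\ul{1}}\vartheta_{\frf,\frc}(\frb)=\vartheta^\alpha_{\frf,\frc}(\frb)$. The function $\widehat{j_!\rho}$ is supported on $p^{-n}\sO_L/\sO_L$ for some $n$, so the evaluation only sees the level-$n$ component $\vartheta^\alpha_{\frf,\frc,n}(\frb)$. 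Here one has to check that the identification of $\bigcup_n\widehat T_W(\Cp)[p^n]$ with $\bigcup_n p^{-n}\sO_L/\sO_L$ matches the torsion points used in $\tau_n$ and in the pairing of Proposition~\ref{prop:evaluation-map}. Both come from the uniformization $\cA(\CC)=\CC^\Sigma/\sO_L$ together with the choices of $\xi_p$ and $(\zeta_{p^n})_n$ that define $\xi_p^\vee$, so they should agree. Transporting through $p^{-n}\Lambda/\Lambda\cong p^{-n}\sO_L/\sO_L$ for $\Lambda=\frf\frb^{-1}$ then puts us in the setting of Theorem~\ref{thm:evaluation-formula} with $\widehat{j_!\rho}$ in place of $\rho$.

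\textbf{Step 3: conclude.} Theorem~\ref{thm:evaluation-formula} yields $(-1)^{\frac{d(d-1)}{2}}\frac{(\alpha-\ul 1)!}{\Omega^\alpha}\bigl(N\frc E^{0,\alpha}(\ldots)-E^{0,\alpha}(\ldots)\bigr)$. This sign cancels the normalizing sign built into $\widetilde\mu$ in Step~2, giving the claimed formula. Its $\Gamma$-invariance hypothesis holds because $\rho\otimes\Xi$ is $\Gamma$-invariant and the Fourier transform is $\Gamma$-equivariant.

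The main obstacle is therefore Step~1 together with the torsion-point compatibility in Step~2: matching the Fourier-theoretic normalizations with the period pairing so that exactly $\Omega_p^\alpha$ comes out, with no extra constants or inverses on $\rho$.
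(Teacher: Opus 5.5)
Your argument follows the paper's proof. You read off $j^*P_\Xi$ from the period pairing, apply Theorem~\ref{thmB} with $\Xi=\LieA^\alpha$, use Proposition~\ref{prop:theta-diff-compatibility-and-subgroups} and the reduction to level $n$, then apply Theorem~\ref{thm:evaluation-formula}, with the two factors $(-1)^{\frac{d(d-1)}{2}}$ cancelling.

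One step does not follow, though. From your Step~1 identity $j^*P_\Xi(z)=\Omega_p^\alpha z^\alpha$, Theorem~\ref{thmB} gives
\[
\Omega_p^{\alpha}\int \rho(z)z^\alpha\,d\widetilde{\mu}_{\frf,\frc}(\frb)=(-1)^{\frac{d(d-1)}{2}}\eval_{\widehat{j_!\rho}\otimes\Xi}\bigl(d^{\alpha-\ul{1}}\vartheta_{\frf,\frc}(\frb)\bigr).
\]
Here $\Omega_p^{\alpha}$ multiplies the integral, whereas your Step~2 (and the statement) has $\Omega_p^{-\alpha}$. So your chain proves the formula with $\Omega_p^{\alpha}\int\cdots$ on the left, which differs from the claim by a factor $\Omega_p^{2\alpha}$. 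The stated version holds precisely when $\langle\LieA(\sigma),\xi_p^\vee\rangle_p=\Omega_p(\sigma)^{-1}$. That means $\Omega_p(\sigma)$ must be normalized as the inverse of the number in Definition~\ref{def:p-adic-periods}. Equivalently, $\omega(\cA)(\sigma)$ equals $\Omega_p(\sigma)$ times the $\sigma$-component of the Hodge--Tate image of $\xi_p^\vee$. This is the convention parallel to the complex period $\Omega(\sigma)=\langle\omega(\cA)(\sigma),\xi_\infty(1)\rangle_\infty$, where the algebraic differential is the period times the canonical one.

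The paper's own proof makes the same jump. It records $\LieA(\sigma)\mapsto(1\mapsto\Omega_p(\sigma))$ and then writes $\frac{1}{\Omega_p^{\alpha}}$ in front of the integral. So this is a normalization inconsistency inherited from the source, not a flaw in your strategy. Still, you singled out exactly this bookkeeping as the main obstacle, so you should carry it through. Either invert the period convention, or state the result with $\Omega_p^{\alpha}$ multiplying the integral.
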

\begin{proof}
Our fixed basis $\omega(\cA)$ gives a basis $\LieA$ of $\Lie(\cA)$. By the definition of the $p$-adic periods, we have
\[
	\Lie(\cA)\hookrightarrow \Hom(T_p\cA^\vee, \Cp)=\Hom(\sO_L\otimes \Zp,\Cp), \quad \LieA(\sigma) \mapsto (1\mapsto \Omega_p(\sigma)).
\]
Hence, the interpolation formula in Theorem \ref{thmB} shows for $\Xi=(\LieA)^\alpha\in \Sym^\alpha(\Lie(\cA))$
\[
	\frac{(-1)^{\frac{d(d-1)}{2}}}{\Omega_p^{\alpha}} \int_{(\sO_L\otimes \Zp)^\times/E(\sO_\frf^\times)} \rho(z)z^{\alpha} \, d\widetilde{\mu}_{\frf,\frc}(\frb)= \eval_{\widehat{j_!\rho}\otimes \Xi}(d^{\alpha-\ul{1}}\vartheta_{\frf,\frc}(\frb))=\eval_{\widehat{j_!\rho}\otimes \Xi}(\vartheta^\alpha_{\frf,\frc}(\frb)).
\]
Now, the Proposition follows from Theorem \ref{thm:evaluation-formula} and the observation that $\eval_{\widehat{j_!\rho}\otimes \Xi}(\vartheta^\alpha_{\frf,\frc}(\frb))=\eval_{\widehat{j_!\rho}\otimes \Xi}(\vartheta^\alpha_{\frf,\frc,n}(\frb))$ for every $n\geq 1$ such that $\widehat{j_!\rho}$ is defined on $C(p^{-n}\sO_L/\sO_L,R)$.
\end{proof}

Let us introduce the local factor appearing in the statement of the main theorem \ref{thmA}. Let $\chi$ be a critical Hecke character of CM type $\Sigma$, infinity type $-\alpha\in \ZZ^\Sigma$ with $\alpha\geq \underline{1}$ and of conductor dividing $p^\infty \frf$. Let $\chi_{\mathrm{fin}}$ be the unique locally constant character
\begin{equation}\label{eq:chi-fin}
	\chi_{\mathrm{fin}} \colon (\sO_L\otimes \Zp)^\times \to \OCp^\times
\end{equation}
such that $\chi((\lambda))=\frac{\chi_{\mathrm{fin}}(\lambda)}{\lambda^\alpha}$ for all $\lambda\in \sO_L$ co-prime to $p\frf$. The isomorphism $(\sO_L\otimes \Zp)^\times\cong \prod_{\frp|p} \sO_{L,\frp}^\times$ allows us to write $\chi_{\fin}=\prod_{\frp|p} \chi_{\fin,\frp}$ with $\chi_{\fin,\frp}\colon \sO_{L,\frp}^\times\to \OCp^\times$. We define $F_\frp\colon \sO_{L,\frp} \to \OCp$ as follows
\begin{equation}\label{eq:F-ext-by-zero}
	F_\frp(x):=\begin{cases}
		\chi_{\fin,\frp}(x^{-1}) & \text{ if }\chi_{\fin,\frp} \text{ is non-trivial and }x\in\sO_{L,\frp}^\times, \\
		0 & \text{ if }\chi_{\fin,\frp} \text{ is non-trivial and }x\notin\sO_{L,\frp}^\times , \\
		1 & \text{ if }\chi_{\fin,\frp} \text{ is trivial},
	\end{cases}
\end{equation}
and set $F:=\prod_{\frp|p} F_\frp\colon \sO_L\otimes \Zp \to \OCp$.

\begin{definition}\label{def:Local-factor}
	Let $\chi$ be a critical algebraic Hecke character of conductor dividing $p^\infty \frf$ and infinity type $-\alpha$. For each prime ideal $\frp$ over $p$, let us write $m_\frp$ for the multiplicity of $\frp$ in the conductor $\cond(\chi)$ of $\chi$. Let us choose a decomposition $\prod_{\frp}\frp^{m_\frp}=(c)\frn$ with $\frn\subseteq \sO_L$ co-prime to $p\frf$ and $c\in L^\times$. We define
	\[
		\Local(\chi,\Sigma):=\frac{\widehat{F}(c^{-1})}{\chi(\frn)c^{-\alpha}}.
	\]
\end{definition}
It is not difficult to show that $\Local(\chi,\Sigma)$ is non-zero and independent of the decomposition $\prod_{\frp}\frp^{m_\frp}=(c)\frn$, and that $\Local(\chi,\Sigma)=1$ if $\chi$ is unramified at $p$, see \cite{Katz-CM}.

We can now finally prove the main Theorem:
\begin{proof}[Proof of Theorem \ref{thmA}]
	Let us write $G(p^\infty\frf):=\Gal(L(p^\infty\frf)/L)$, and choose ideals $\frb_1,\dots, \frb_h\subseteq \sO_L$ co-prime to $p\frf\frc$ which are a full set of representatives for $\Gal(L(\frf)/L)$ under the Artin map. By Proposition \ref{prop:decomp-Gal-distributions}, we have an isomorphism
	\begin{align}\label{eq:Distr-decomposition}
		D_{W(\Sigma)}(G(p^\infty\frf),\Cp)&\cong \bigoplus_{i=1}^h D_{W(\Sigma)}((\sO_L\otimes \Zp)^\times/E(\sO_\frf^\times),\Cp).
	\end{align}
 Proposition \ref{propB} gives us for each $i=1,\dots,h$ a unique $\Sigma$-analytic distributions $\widetilde{\mu}_{\frf,\frc}(\frb_i)$ with the interpolation property
	\begin{multline*}
		\frac{1}{\Omega_p^{\alpha}} \int_{(\sO_L\otimes \Zp)^\times/E(\sO_\frf^\times)} \rho(z)z^{\alpha} \, d\widetilde{\mu}_{\frf,\frc}(\frb_i)\\
		 = \frac{(\alpha-\ul{1})!}{\Omega^\alpha} \left(N\frc E^{0,\alpha}(\widehat{j_!\rho}\ast \delta_1,0; \frf\frb_i^{-1},\sO_{\frf}^\times)-E^{0,\alpha}(\widehat{j_!\rho}\ast \delta_1,0; \frf\frc^{-1}\frb_i^{-1},\sO_{\frf}^\times)\right).
	\end{multline*}
	For each $i=1,\dots, h$ we define $\mu_{\frf,\frc}(\frb_i)\in D_{W(\Sigma)}((\sO_L\otimes \Zp)^\times/E(\sO_L^\times),\Cp)$ by $\mu_{\frf,\frc}(\frb_i):=\mathrm{inv}^* \widetilde{\mu}_{\frf,\frc}(\frb_i)$, where $\mathrm{inv}\colon (\sO_L\otimes\Zp)^\times/E(\sO_L^\times) \xrightarrow{\sim} (\sO_L\otimes\Zp)^\times/E(\sO_L^\times)$ is the map $z\mapsto z^{-1}$, i.e., we have
	\[
		\int_{(\sO_L\otimes \Zp)^\times/E(\sO_\frf^\times)} f(z) d\mu_{\frf,\frc}(\frb_i):= \int_{(\sO_L\otimes \Zp)^\times/E(\sO_\frf^\times)} f(z^{-1}) d\widetilde{\mu}_{\frf,\frc}(\frb_i).
	\]
	We define the distribution $\mu_{\frf,\frc}\in D_{W(\Sigma)}(G(p^\infty\frf),\Cp)$ as the distribution corresponding to $(\mu_{\frf,\frc}(\frb_i))_{i=1}^h$ under the isomorphism \eqref{eq:Distr-decomposition}, and get
	\begin{align*}
		\int_{G(p^\infty\frf)}\chi(g)d\mu_{\frc,\frf}(g)=\sum_{i=1}^h \chi(\frb_i) \int_{(\sO_L\otimes \Zp)^\times/E(\sO_L^\times)} \chi_{\fin}^{-1}(z)z^{\alpha}d\widetilde{\mu}_{\frf,\frc}(\frb_i).
	\end{align*}
	This allows us to compute the integral in terms of Eisenstein series:
	\begin{multline}\label{eq:p-adicIntegral}
		\frac{1}{\Omega_p^\alpha}\int_{G(p^\infty\frf)}\chi(g)d\mu_{\frc,\frf} \\
		=\frac{(\alpha-\underline{1})!}{\Omega^\alpha}\sum_{i=1}^h\chi(\frb_i)\left(N\frc E^{0,\alpha}(\widehat{j_!\chi_\fin^{-1}}\ast \delta_1,0; \frf\frb_i^{-1},\sO_{\frf}^\times)-E^{0,\alpha}(\widehat{j_!\chi_\fin^{-1}}\ast \delta_1,0; \frf\frc^{-1}\frb_i^{-1},\sO_{\frf}^\times)\right).
	\end{multline}
	As before, let us denote the multiplicity of $\frp|p$ in $\cond(\chi)$ by $m_\frp$. The function $\widehat{j_!\chi_\fin^{-1}}$ is then supported on $(\prod_{m_\frp\geq 1}\frp^{-m_\frp})(\prod_{ m_\frp=0} \frp^{-1})\frf\frb_i^{-1}/\frf\frb_i^{-1}$. Using the maps
	\[
		1+ (\prod_{m_\frp\geq 1}\frp^{-m_\frp})(\prod_{ m_\frp=0} \frp^{-1})\frf\frb_i^{-1} \cong (\prod_{m_\frp\geq 1}\frp^{-m_\frp})(\prod_{ m_\frp=0} \frp^{-1})\frf\frb_i^{-1}  \twoheadrightarrow (\prod_{m_\frp\geq 1}\frp^{-m_\frp})(\prod_{ m_\frp=0} \frp^{-1})\frf\frb_i^{-1}/\frf\frb_i^{-1}
	\]
	we extend  $\widehat{j_!\chi_\fin^{-1}}$ to $1+ (\prod_{m_\frp\geq 1}\frp^{-m_\frp})(\prod_{ m_\frp=0} \frp^{-1})\frf\frb_i^{-1}$. With this convention, we get
	\begin{align*}
		&E^{0,\alpha}(\widehat{j_!\chi_\fin^{-1}}\ast \delta_1,0; \frf\frb_i^{-1},\sO_{\frf}^\times)=\left.\sum_{\widetilde{\lambda}\in \sO_\frf^\times\backslash (1+  (\prod_{m_\frp\geq 1}\frp^{-m_\frp})(\prod_{ m_\frp=0} \frp^{-1})\frf\frb_i^{-1})} \frac{\widehat{j_!\chi_\fin^{-1}}(\widetilde{\lambda})}{\widetilde{\lambda}^\alpha N(\widetilde{\lambda})^s}\right|_{s=0}.
	\end{align*}
	Let us now choose a decomposition
	\[
		\prod_{\frp} \frp^{m_\frp}=(c)\frn
	\]
	with $0\neq c\in 1+\frf$ and $\frn$ co-prime to $p$. We have
	\[
		1+  (\prod_{m_\frp\geq 1}\frp^{-m_\frp})(\prod_{ m_\frp=0} \frp^{-1})\frf\frb_i^{-1}=1+ c^{-1}\frb_i^{-1}\frn^{-1}(\prod_{ m_\frp=0} \frp^{-1})\frf=c^{-1}\left(1+\frb_i^{-1}\frn^{-1}(\prod_{ m_\frp=0} \frp^{-1})\frf\right),
	\]
	hence we can write every $\widetilde{\lambda}\in 1+ \prod_{\frp} \frp^{-m_\frp}\frf\frb_i^{-1}$ as $c^{-1}\lambda$ with $\lambda\in 1+\frb_i^{-1}\frn^{-1}(\prod_{ m_\frp=0} \frp^{-1})\frf$. Now, a straightforward computation shows
	\[
		\widehat{j_!\chi_\fin^{-1}}(c^{-1}\lambda)=\widehat{F}(c^{-1})\chi_{\fin}(\lambda)\prod_{\frp|p}\widehat{\Char_\frp}(\lambda),
	\]
	where $F=\prod_\frp F_\frp$ is the function defined in \eqref{eq:F-ext-by-zero}, and $\Char_\frp$ is the characteristic function of $\frp\otimes\Zp$ in $\sO_L\otimes \Zp$. It is not difficult to compute
	\[
		\widehat{\Char_\frp}(\lambda)=
		\begin{cases}
			1-N\frp^{-1} & \text{if } v_\frp(\lambda)\geq 0,\\
			-N\frp^{-1} & \text{if } v_\frp(\lambda)= -1,\\
			0 & \text{otherwise.}
		\end{cases}
	\]
	With this computation, we get
	\begin{align*}
		E^{0,\alpha}(\widehat{j_!\chi_\fin^{-1}}\ast \delta_1,0; \frf\frb_i^{-1},\sO_{\frf}^\times)=&\left.\frac{\widehat{F}(c^{-1})}{c^{-\alpha}}N(c)^s\sum_{\lambda \in \sO_\frf^\times \backslash (1+\frb_i^{-1}\frn^{-1}(\prod_{ m_\frp=0} \frp^{-1})\frf)} \frac{\chi_{\fin}(\lambda)}{\lambda^\alpha N(\lambda)^s}\prod_{\frp}\widehat{\Char_\frp}(\lambda)\right|_{s=0}\\
		=&\left.\frac{\widehat{F}(c^{-1})}{c^{-\alpha}}N(c)^s\sum_{\lambda \in \sO_\frf^\times \backslash (1+\frb_i^{-1}\frn^{-1}(\prod_{ m_\frp=0} \frp^{-1})\frf)} \frac{\chi((\lambda))}{N(\lambda)^s}\prod_\frp\widehat{\Char_\frp}(\lambda)\right|_{s=0}
	\end{align*}
	Next, we want to use the bijection
	\[
		\sO_\frf^\times \backslash (1+\frb_i^{-1}\frn^{-1}(\prod_{ m_\frp=0} \frp^{-1})\frf) \xrightarrow{\sim} \{ I\subseteq \prod_{ m_\frp=0} \frp^{-1}: I\in [\frn\frb_i] \}, \quad \lambda\mapsto \lambda \cdot \frn\frb_i
	\]
	to write relate this series to a partial $L$-value. Therefore, we extend $\widehat{\Char}_\frp$ to all fractional ideals as follows:
	\[
		\widehat{\Char}_\frp(I):=
		\begin{cases}
			1-N\frp^{-1} & \text{if } v_\frp(I)\geq 0,\\
			-N\frp^{-1} & \text{if } v_\frp(I)= -1,\\
			0 & \text{otherwise.}
		\end{cases}
	\]
	With this definition, we have $\widehat{\Char}_\frp(\lambda \frn\frc)=\widehat{\Char}_\frp(\lambda)$. We can now continue our computation:
	\begin{align*}
		&\sum_{i=1}^h \chi(\frb_i)E^{0,\alpha}(\widehat{j_!\chi_\fin^{-1}}\ast \delta_1,0; \frf\frb_i^{-1},\sO_{\frf}^\times)\\
		=&\left.\frac{\widehat{F}(c^{-1})}{c^{-\alpha}}N(c)^s\sum_{i=1}^h \chi(\frb_i)\sum_{\lambda \in \sO_\frf^\times \backslash (1+\frb_i^{-1}\frn^{-1}(\prod_{ m_\frp=0} \frp^{-1})\frf)} \frac{\chi((\lambda))}{N(\lambda)^s}\prod_\frp\widehat{\Char_\frp}(\lambda)\right|_{s=0}\\
		=&\left.\frac{\widehat{F}(c^{-1})}{c^{-\alpha}}N(c)^s\sum_{i=1}^h \chi(\frb_i)\sum_{\substack{0\neq I\subseteq \prod_{ m_\frp=0} \frp^{-1} \\ I\in [\frn\frb_i]}} \frac{\chi(\frn^{-1}\frb_i^{-1}I)}{N(\frn^{-1}\frb_i^{-1}I)^s}\prod_\frp\widehat{\Char_\frp}(I)\right|_{s=0}\\
		=&\left.\Local(\chi,\Sigma)\sum_{0\neq I\subseteq \prod_{ m_\frp=0} \frp^{-1} } \frac{\chi(I)}{NI^s}\prod_\frp\widehat{\Char_\frp}(I)\right|_{s=0}
	\end{align*}
	In this series, we may assume that $v_\frp(I)=0$ for every $\frp$ dividing the conductor of $\chi$ otherwise $\chi(I)=0$. Hence, we can write every $I\subseteq \prod_{m_\frp=0}\frp^{-1}$ appearing in the index uniquely as $I=\prod_{m_\frp=0}\frp^{n_\frp}I_0$ with $I_0\subseteq \sO_L$ prime to $p$ and $n_\frp\geq -1$ and get
	\begin{align*}
		\sum_{0\neq I\subseteq \prod_{ m_\frp=0} \frp^{-1}} \frac{\chi(I)}{NI^s}\prod_\frp\widehat{\Char_\frp}(I)&=\sum_{(I_0,p)=1} \frac{\chi(I_0)}{NI_0^s}\prod_{m_\frp=0} \sum_{n\geq -1}\frac{\chi(\frp)^n}{N\frp^{ns}}\widehat{\Char_\frp}(\frp^n)\\
		&=\sum_{(I_0,p)=1} \frac{\chi(I_0)}{NI_0^s}\prod_{m_\frp=0} \left(1-\frac{N\frp^s}{\chi(\frp)N\frp}\right) \sum_{n\geq 0}\frac{\chi(\frp)^n}{N\frp^{ns}}\\
		&=\sum_{(I_0,p)=1} \frac{\chi(I_0)}{NI_0^s}\prod_{m_\frp=0} \left(1-\frac{\chi(\frp^{-1})N\frp^s}{N\frp}\right) \left( 1-\frac{\chi(\frp)}{N\frp^s}\right)^{-1}\\
		&=\prod_{\frp} \left(1-\frac{\chi(\frp^{-1})N\frp^s}{N\frp}\right) \sum_{I_0} \frac{\chi(I_0)}{NI_0^s}\\
		&=\prod_{\frp} \left(1-\frac{\chi(\frp^{-1})N\frp^s}{N\frp}\right) L_\frf(\chi,s).
	\end{align*}
Together with \eqref{eq:p-adicIntegral}, this proves the interpolation formula
\[
	\frac{1}{\Omega_p^\alpha}\int_{G(p^\infty\frf)}\chi(g)d\mu_{\frc,\frf}=\frac{(\alpha-\underline{1})!}{\Omega^{\alpha}}\Local(\chi,\Sigma) (N\frc-\chi(\frc^{-1}))\prod_{\frp\mid p} \left( 1-\frac{\chi(\frp^{-1})}{N\frp} \right)L_\frf(\chi,0).
\]
Using the isomorphism of topological $\Cp$-algebras
\[
	D_{W(\Sigma)}(G(p^\infty\frf),\Cp)\cong\cO(\widehat{G(p^\infty\frf)}_{W(\Sigma)}),
\]
we may view $\mu_{\frc,\frf}$ as a function on the $\Sigma$-analytic character variety $\cO(\widehat{G(p^\infty\frf)})_{W(\Sigma)}$. For $\frc\subseteq \sO_L$ co-prime to $p\frf$ and $\frb_1,\dots,\frb_h$, let us write $V(\frc)$ for the vanishing locus of the function
\[
	\chi \mapsto N\frc -\chi(\frc^{-1})
\]
on $\widehat{G(p^\infty\frf)}_{W(\Sigma)}$. We obtain a rigid analytic function $L_{p,\frc}\in \cO(\widehat{G(p^\infty\frf)}_{W(\Sigma)}\setminus V(\frc))$ defined as
\[
	\chi \mapsto \frac{\mu_{\frc,\frf}(\chi)}{(N\frc-\chi(\frc^{-1}))},
\]
and for different ideals $\frc,\frc'$, we have $L_{p,\frc}=L_{p,\frc'}$ on the intersection of the open rigid analytic subvarieties $\widehat{G(p^\infty\frf)}_{W(\Sigma)}\setminus V(\frc)$ and $\widehat{G(p^\infty\frf)}_{W(\Sigma)}\setminus V(\frc')$. Now observe that the inverse norm character $N(\cdot)^{-1}$ is not $\Sigma$-analytic. Hence, we can find for any $\chi \in \widehat{G(p^\infty\frf)}_{W(\Sigma)}$ a $\frc\subseteq \sO_L$ such that $L_{p,\frc}$ is defined in a neighbourhood of $\chi$. This shows that the functions $L_{p,\frc}$ glue to a function $L_p\in \widehat{G(p^\infty \frf)}_{W(\Sigma)}$.
\end{proof}

\bibliographystyle{amsalpha}
\bibliography{p-adic-L-functions.bib}
\end{document}